\documentclass[10pt]{amsart}

\usepackage{amsmath, amsthm, amssymb, graphicx}

 \topmargin -.5cm        
 \oddsidemargin -0.04cm   
 \evensidemargin -0.04cm  
 \textwidth 16.59cm
 \textheight 21.94cm 
 \parskip 2.2pt

\newcounter{citedtheorems}

\newtheorem{defn}{Definition}[section]
\newtheorem{theorem}[defn]{Theorem}
\newtheorem*{theorem-n}{Main Theorem}
\newtheorem*{thm-n}{Theorem}
\newtheorem*{claim-n}{Claim}
\newtheorem{thm-lit}[citedtheorems]{Theorem}
\newtheorem{defn-lit}[citedtheorems]{Definition}
\newtheorem{fact}[defn]{Fact}
\newtheorem{cor}[defn]{Corollary}

\newtheorem{concl}[defn]{Conclusion}
\newtheorem{conv}[defn]{Convention}
\newtheorem{claim}[defn]{Claim}

\newtheorem{lemma}[defn]{Lemma}
\newtheorem{obs}[defn]{Observation}
\newtheorem{rmk}[defn]{Remark}

\newtheorem{disc}[defn]{Discussion}

\newtheorem{o-conj}[defn]{Original Conjecture}

\newcommand{\lost}{\L os' }

\newcommand{\br}{\vspace{2mm}}

\newcommand{\kleq}{\trianglelefteq}

\newcommand{\tlf}{\trianglelefteq}

\newcommand{\step}{\vspace{3mm}\noindent\emph}   
\newcommand{\lp}{\emph{(}}
\newcommand{\rp}{\emph{)}}
\newcommand{\mpp}{\mathbf{P}}
\newcommand{\mch}{\mathcal{H}}

\newcommand{\de}{\mathcal{D}}
\newcommand{\ee}{\mathcal{E}}
\newcommand{\eff}{\mathcal{F}}

\newcommand{\fss}{{\mathcal{P}}_{\aleph_0}}

\newcommand{\trv}{\textbf{t}} 
\newcommand{\uu}{\mathcal{U}}

\newcommand{\gee}{\mathcal{G}}
\newcommand{\trg}{{T_{\mathbf{rg}}}}
\newcommand{\rn}{\operatorname{Range}}

\newcommand{\ba}{\mathfrak{B}}

\newcommand{\fin}{\operatorname{FI}}

\newcommand{\mk}{\mathcal{K}}
\newcommand{\el}{\emph{(}}
\newcommand{\er}{\emph{)}}
\newcommand{\rstr}{\upharpoonright}

\newcommand{\vp}{\varphi}
\newcommand{\lcf}{\operatorname{lcf}}
\newcommand{\cf}{\operatorname{cf}}

\newcommand{\ci}{\textbf{f}_i}
\newcommand{\bx}{\mathbf{x}}
\newcommand{\bz}{\mathcal{R}}
\newcommand{\mx}{\mathbf{x}}
\newcommand{\mh}{\mathbf{h}}

\newcommand{\fil}{\operatorname{fil}}
\newcommand{\mcp}{\mathcal{P}}
\newcommand{\ma}{\mathcal{A}}
\newcommand{\maa}{\mathbf{a}}
\newcommand{\mb}{\mathbf{b}}
\newcommand{\dom}{\operatorname{dom}}
\newcommand{\Hom}{\operatorname{Hom}}

\title[Model theoretic properties of ultrafilters...]{Model-theoretic properties of ultrafilters \\ built by independent families of functions}
\author{M. Malliaris and S. Shelah}\thanks{\emph{Thanks}: 
Malliaris was partially supported by NSF grant DMS-1001666 and by a G\"odel fellowship. 
Shelah was partially supported by the Israel Science Foundation grants 710/07 and 1053/11. 
This is paper 997 in Shelah's list.}

\address{Department of Mathematics, University of Chicago, 5734 S. University Avenue, Chicago, IL 60637, USA and
Einstein Institute of Mathematics, Edmond J. Safra Campus, Givat Ram, The Hebrew
University of Jerusalem, Jerusalem, 91904, Israel}
\email{mem@math.uchicago.edu}


\address{Einstein Institute of Mathematics, Edmond J. Safra Campus, Givat Ram, The Hebrew
University of Jerusalem, Jerusalem, 91904, Israel, and Department of Mathematics,
Hill Center - Busch Campus, Rutgers, The State University of New Jersey, 110
Frelinghuysen Road, Piscataway, NJ 08854-8019 USA}
\email{shelah@math.huji.ac.il}
\urladdr{http://shelah.logic.at}

\begin{document}

\subjclass[2010]{Primary: 03C20, 03C45, 03E05}

\keywords{Unstable model theory, regular ultrafilters, saturation of ultrapowers, Keisler's order}

\begin{abstract}
Our results in this paper increase the model-theoretic precision
of a widely used method for building ultrafilters, and so advance 
the general problem of constructing ultrafilters whose ultrapowers have a precise degree of saturation.
We begin by showing that any flexible regular ultrafilter makes the product of an unbounded sequence
of finite cardinals large, {thus} saturating any stable theory. 
We then prove directly that a ``bottleneck'' in the inductive construction of a
regular ultrafilter on $\lambda$ (i.e. a point after which all antichains of $\mcp(\lambda)/\de$ have
cardinality less than $\lambda$) essentially prevents any subsequent ultrafilter from being flexible,
{thus} from saturating any non-low theory. 
The constructions are as follows.
First, we construct a regular filter $\de$ on $\lambda$ so that any ultrafilter extending $\de$ fails to 
$\lambda^+$-saturate ultrapowers of the random graph, {thus} of any unstable theory. 
The proof constructs the omitted random graph type directly.
Second, assuming existence of a measurable cardinal $\kappa$, we construct a regular ultrafilter on $\lambda > \kappa$ which is $\lambda$-flexible
but not $\kappa^{++}$-good, improving our previous answer to a question raised in Dow 1975.
Third, assuming a weakly compact cardinal $\kappa$, we construct an ultrafilter to show that $\lcf(\aleph_0)$ may be small while all symmetric cuts of cofinality $\kappa$ are realized. Thus certain families of pre-cuts may be realized while still failing to saturate any unstable theory.  
\end{abstract}

\maketitle

\section{Introduction}

Our work in this paper is framed by the longstanding open problem of Keisler's order, introduced in Keisler 1967 \cite{keisler} and
defined in \ref{keisler-order} below. 
Roughly speaking, this order allows one to compare the complexity of theories in terms of the relative 
difficulty of producing saturated regular ultrapowers. An obstacle to progress on this order has been the difficulty  
of building ultrafilters which produce a precise degree of saturation. 

Recent work of the authors (Malliaris \cite{mm-thesis}-\cite{mm5}, Malliaris and Shelah \cite{MiSh:996}-\cite{MiSh:998})
has substantially advanced our understanding of the interaction of ultrafilters and theories. 
Building on this work, in the current paper
and its companion \cite{MiSh:996} we address the problem of building ultrafilters with specific amounts of saturation. 
\cite{MiSh:996} focused on constructions of ultrafilters by products of regular and complete ultrafilters, 
and here we use the method of independent families of functions.

First used by Kunen in his 1972 ZFC proof of the existence of good ultrafilters, 
the method of independent families of functions
has become fundamental for constructing regular ultrafilters. 
The proofs in this paper leverage various inherent constraints of this method
to build filters with specified boolean combinations of model-theoretically meaningful properties, i.e. 
properties which guarantee or prevent realization of types. 

Our main results are as follows. Statements and consequences are given in more detail in \S \ref{s:description} below.
We prove that any ultrafilter $\de$ which is $\lambda$-flexible (thus: $\lambda$-o.k.) must have $\mu(\de) = 2^\lambda$, 
where $\mu(\de)$ is the  minimum size of a product of an unbounded sequence of natural numbers modulo $\de$ . 
Thus, a fortiori, $\de$ will saturate any stable theory. 
We prove that if, at any point in a construction by independent functions the cardinality of the range of 
the remaining independent family is strictly smaller than the index set, then essentially no subsequent ultrafilter can be flexible.
We then give our three main constructions. 
First, we show how to construct a filter so that no subsequent ultrafilter will saturate the random graph, \emph{thus}
no subsequent ultrafilter will saturate any unstable theory (see \ref{conventions} for this use of the word ``saturate'').
The proof explicitly builds an omitted type into the construction. 
Second, assuming the existence of a measurable cardinal $\kappa$, 
we prove that on any $\lambda \geq \kappa^+$ there is a regular ultrafilter
which is flexible but not good. This result improves our prior answer, in \cite{MiSh:996}, to a question from Dow 1975 \cite{dow} and
introduces a perspective which proved significant for \cite{MiSh:999}.
Third, we construct an example proving an a priori surprising nonimplication between realization of symmetric cuts and 
$\lcf(\aleph_0, \de)$, i.e. the coinitiality of $\omega$ in $(\omega, <)^\lambda/\de$.
That is, assuming the existence of a weakly compact cardinal $\kappa$, we prove that
for $\aleph_0 < \theta = \cf(\theta) < \kappa \leq \lambda$ there is a regular ultrafilter $\de$ on $\lambda$ such that
$\lcf(\aleph_0, \de) = \theta$ but $(\mathbb{N}, <)^\lambda/\de$ has no $(\kappa, \kappa)$-cuts. This appears counter to
model-theoretic intuition, since it shows some families of cuts in linear order can be realized without saturating any 
unstable theory. The proof relies on building long indiscernible sequences in the quotient Boolean algebra. 

For the model theoretic reader, we attempt to give a relatively self contained account 
of independent families and construction of ultrafilters as used here. We 
define all relevant properties of ultrafilters, many of
which correspond naturally to realizing certain kinds of types.
For the reader interested primarily in combinatorial set theory, note that while the model-theoretic point of view is fundamental, 
we deal primarily with ultrapowers of the random graph and of linear order; the arguments mainly require
familiarity with saturation, the random graph, and, ideally, the definitions of unstable, finite cover property, 
order property, independence property and strict order property. [Definitions 
like ``non-simple'' and ``non-low'' may be taken as black boxes in theorems about properties of filters.]  

Familiarity with Keisler's order beyond what is described below is not necessary for reading the present paper. For the interested reader, however,
\cite{MiSh:996} Sections 1-4 are a lengthy expository introduction to Keisler's order and what is known.

The paper is organized as follows.
Our methods, results and some consequences are described in \S \ref{s:description} below.
Section \ref{s:background} gives the key definitions and the necessary background 
on constructing ultrafilters via independent families of functions. 
Sections \S \ref{s:flex-mu}-\S \ref{s:weakly-compact} contain the proofs.

Thanks to Shimon Garti for some helpful comments, and to Simon Thomas for organizational remarks on 
this paper and its companion \cite{MiSh:996}. 

\setcounter{tocdepth}{1}
\tableofcontents \label{toc}

\section{Description of results} \label{s:description}

This section presents the results of the paper in more detail, under italicized headers, along with some consequences. 
We informally say that a (regular) ultrafilter $\de$ on $I$ saturates a theory $T$ to mean that whenever $M \models T$,
$M^I/\de$ is $|I|^+$-saturated. This phrasing is justified by Theorem \ref{backandforth}, \S \ref{s:background} below.
``Minimum,'' ``maximum'' refer to Keisler's order 
$\tlf$, \ref{keisler-order} below.
Theorem \ref{formula-corr} in \S \ref{s:review} may be a useful glossary for this introduction.

\br
\step{Flexible filters saturate stable theories.}
Flexibility was introduced in Malliaris \cite{mm-thesis}-\cite{mm4} as a property of filters which was detected
by non-low theories, i.e. if $\de$ is not flexible and $Th(M)$ is not low then $M^\lambda/\de$ is not $\lambda^+$-saturated.
Flexibility is presented in \S \ref{s:flexible} below. Stable theories are low. 

The invariant $\mu(\de)$, Definition \ref{mu-defn} below,
describes the minimum size, modulo $\de$, of an unbounded sequence of finite cardinals. 
In Claim \ref{mu-large} we prove that any $\lambda$-flexible ultrafilter 
must have $\mu(\de) = 2^\lambda$. Thus such an ultrafilter will $\lambda^+$-saturate any stable theory, Conclusion \ref{flex-stable}.
By a previous paper \cite{MiSh:996} Theorem 6.4 consistently flexibility does not imply saturation of the 
minimum unstable theory,
the random graph; so this is best possible. 

\step{Preventing future flexibility during an ultrafilter construction.}
In Claim \ref{f2} we prove directly that if, at any point in a construction by independent functions the cardinality of the range of 
the remaining independent family is strictly smaller than the index set, then essentially (i.e. after ``consuming'' one more function) 
no subsequent ultrafilter can be flexible. This gives a point of leverage 
for proving non-saturation.  

\br
The core of the paper contains three constructions. 

\step{Preventing future saturation of any unstable theory.}
In the first construction, Theorem \ref{claim-new}, we show how to build a regular filter $\de$, at the cost of a single independent function $g_*$, 
so that no subsequent ultrafilter saturates the theory of the random graph. 
As the random graph is minimum among the unstable theories in Keisler's order, this shows that no subsequent ultrafilter will
saturate \emph{any} unstable theory. 

This is a theorem in the spirit of Claim \ref{f2} just discussed, that is, a technique which allows one to construct ultrafilters which realize
certain types and omit others by ensuring that the ``omitting types'' half of the construction is already ensured at some bounded
point in the construction.  Now, it has long been known how to construct an \emph{ultrafilter} on $\lambda > 2^{\aleph_0}$ 
which saturates precisely the stable theories,
essentially by organizing the transfinite construction of the ultrafilter so that $\mu(\de)$ is large but $\lcf(\aleph_0)$ is small. 
[In the language of \S \ref{s:constr}, begin with some regular $\delta > \aleph_0$ and a 
$(\lambda, \aleph_0)$-good triple $(I, \de_0, \gee)$ where
$\gee \subseteq {^I \aleph_0}$, $|\gee| = 2^\lambda$. Enumerate $\gee$ 
by an ordinal divisible by $2^\lambda$ and with cofinality $\delta$, 
and apply Fact \ref{fact-lcf}. See \cite{Sh:a} VI.3.12 p. 357 and VI.4.8 p. 379.] 
However, in such constructions the coinitiality
of $\aleph_0$ in the ultrapower mirrors the cofinality of the ultrafilter construction. The construction here, by contrast, 
ensures failure of saturation in any future ultrapower
long before the construction of an ultrafilter is complete.

To prove the theorem, using the language of \S \ref{s:constr}, we begin with $(I, \de_0, \gee)$ a 
$(\lambda, \mu)$-good triple with $\mu^+ < \lambda$, $M \models \trg$ where $R$ denotes the edge relation. 
We unpack the given independent
function $g_* \in \gee$ so it is a sequence $\langle f^*_\epsilon : \epsilon < \mu^+\rangle$, such that 
$(I, \de_0, \gee \setminus \{g_*\} \cup \{ f^*_\epsilon : \epsilon < \mu^+ \})$ is good. 
We then build $\de \supseteq \de_0$ in an inductive construction of length $\mu^+$, consuming the functions $f^*_\epsilon$. 
At each inductive step $\beta$, we ensure that
$f^*_{2\beta}, f^*_{2\beta + 1}$ are $R$-indiscernible to certain distinguished functions $f: I \rightarrow M$, and that 
$f^*_{2\beta}, f^*_{2\beta+1}$ are unequal to each other and to all $f^*_\gamma$, $\gamma < 2\beta$. The structure of the induction
ensures that all functions from $I$ to $M$ are equivalent modulo the eventual filter $\de$ to one of the distinguished functions.
Thus for any subsequent ultrafilter $\de_* \supseteq \de$, $M^I/\de_*$ will omit the type of an element connected to 
$f^*_\gamma$ precisely when $\gamma$ is even, and so fail to be $\mu^{++}$-saturated. 

As a corollary, we have in ZFC that
$\lcf(\aleph_0, \de)$ may be large without saturating the theory of the random graph. This was shown assuming a measurable cardinal in 
\cite{MiSh:996} Theorem 4.2 and otherwise not known. This is another advantage of Theorem \ref{claim-new}, to
disentangle the cofinality of the construction from non-saturation of the random graph.
This question is of interest as the reverse implication was known: $\lcf(\aleph_0, \de)$ is necessary for saturating some unstable theory. 
As the random graph is minimum among the unstable theories in Keisler's order, our result shows it is necessary but not sufficient.

\step{An ultrafilter which is flexible but not good.}
In the second construction, Theorem \ref{flex-not-good-b}, we prove assuming the existence of a measurable cardinal $\kappa$ (to obtain an $\aleph_1$-complete ultrafilter), that on any $\lambda \geq \kappa^+$ there is a regular ultrafilter
which is $\lambda$-flexible but not $\kappa^{++}$-good. 

Specifically, we first use an inductive construction via families of independent functions to 
produce a ``tailor-made'' filter $D$ on $|I| = \lambda$ which, among other things, is $\lambda$-regular, $\lambda^+$-good, 
admits a surjective homomorphism $h: \mcp(I) \rightarrow \mcp(\kappa)$ such that $h^{-1}(1) = \de$.
Letting $E$ be an $\aleph_1$-complete ultrafilter on $\kappa$, we define an ultrafilter
$\de \supseteq D$ by $\de = \{ A \subseteq I : h(A) \in E\}$, and prove it has the properties desired. Two notable features of this
construction are first, the utility of working with boolean algebras, and second, the contrast with Claim \ref{f2} described above.
This is discussed in Remark \ref{r:compare}.

This result addresses a question of Dow 1975 \cite{dow}, and also improves our previous proof on this subject 
in \cite{MiSh:996} Theorem 6.4. There, it is shown by taking a product of ultrafilters that if 
$\kappa > \aleph_0$ is measurable and $2^\kappa \leq \lambda = \lambda^\kappa$ then there is a regular ultrafilter on $I$, $|I| = \lambda$
which is $\lambda$-flexible but not $(2^\kappa)^+$-good. See also Dow \cite{dow} 3.10 and 4.7, and \cite{MiSh:996} Observation 10.9 for a translation.

\step{Realizing some symmetric cuts without saturating any unstable theory.}
In light of the second author's theorem that any theory with the strict order property is maximal in Keisler's order (\cite{Sh:a}.VI 2.6),
 it is natural to 
study saturation of ultrapowers by studying what combinations of cuts may be realized and omitted in ultrapowers of linear order.
The significance of symmetric cuts is underlined by the connection to $SOP_2$ given in the authors' paper \cite{MiSh:998}.

In the third construction, Theorem \ref{wc-lcf}, assuming the existence of a weakly compact cardinal $\kappa$, we prove that
for $\aleph_0 < \theta = \cf(\theta) < \kappa \leq \lambda$ there is a regular ultrafilter $\de$ on $I$, $|I| = \lambda$ such that
$\lcf(\aleph_0, \de) = \theta$ but $(\mathbb{N}, <)^I/\de$ has no $(\kappa, \kappa)$-cuts. 
That is, we build an ultrafilter to these specifications using an independent family $\eff$ of functions with range $\aleph_0$
(so note that the ultrafilter will not be flexible). $\lcf(\aleph_0, \de) \leq \lambda$ implies that $\de$ will fail to
saturate any unstable theory. 

We now briefly describe the structure of the proof.
The construction will have two constraints. On one hand,
we would like the lower cofinality of $\aleph_0$ to be small, equal to $\theta$. 
We can control this in the known way, i.e. by enumerating the steps
in our construction by an ordinal $\delta$ with cofinality $\theta$, and ensuring that we continuously ``consume'' elements of
$\eff$ in such a way that at the end of stage $\eta$ all sets supported by $\eff_\eta \subseteq \eff$ have been decided, and
$\bigcup_{\eta < \delta} \eff_\eta = \eff$. On the other hand, we would like to ensure that the ultrafilter realizes
 all $(\kappa, \kappa)$-pre-cuts (recall the convention on ``cut'' versus ``pre-cut'' in \ref{conventions}). 
Accomplishing this requires two things. 

The first is to ensure that any pre-cut in $(\mathbb{N}, <)^I/\de$ is already a consistent
partial $D_\eta$-type for some $\eta < \delta$, i.e. it is ``already a type'' at some bounded stage in the construction and
thus we will have enough room to try to realize it. Roughly speaking, 
we assign each formula $\vp_\alpha = a^1_\alpha < x < a^2_\alpha$ in the type to the minimum
$\eta < \delta$ such that $X_\alpha = \{ t \in I : M \models \exists x \vp_\alpha(x ; a^1_\alpha[t], a^2_\alpha[t]) \}$ 
is supported by $\eff_\eta$; without loss of generality 
the range of this function is $\theta = \cf(\delta)$, and by weak compactness it is constant on a cofinal subset of the cut.

The second, more substantial task is to show, at a given inductive step in the construction, that a given pre-cut can be realized. 
By the previous paragraph, we may assume that for each formula $\vp_\alpha$ in the type, the set $X_\alpha$ belongs to the current filter.
Thus we have a distribution for the type in hand  
and we would like to extend the filter to include a multiplicative refinement.
In particular, we are obliged to choose a suitable refinement of each $X_\alpha$. 
To do this, we build what are essentially indiscernible sequences (of countable sequences of elements) 
in the underlying boolean algebra, one for each $\alpha$. 
We do this so that row $\beta$ in indiscernible sequence $\alpha$ is a partition of the boolean algebra
on which the set of solutions to $\{ t \in I : M \models \exists x ( a^1_\alpha[t] < a^1_\beta[t] < x < a^2_\beta[t] < a^1_\alpha[t] ) \}$
is based. (Here $\alpha < \beta$ and $\alpha, \beta$ range over some cofinal sequence in $\kappa$.)
The templates for such sequences are extracted using the strong uniformity we have available on $\kappa$. 
We then show how to obtain a multiplicative refinement by generically extending each such sequence one additional step. 

To finish, we indicate how to avoid large cardinal hypotheses for some related results.

\section{Background: Flexibility, independent families of functions, boolean algebras} \label{s:background}

\subsection{Basic definitions} \label{s:basic} We define regular filters, good filters and Keisler's order.

\begin{defn} \emph{(Regular filters)} \label{regular}
A filter $\de$ on an index set $I$ of cardinality $\lambda$ is said to be \emph{$\lambda$-regular}, or simply 
\emph{regular}, if there exists a $\lambda$-regularizing family $\langle X_i : i<\lambda \rangle$, which means that:
\begin{itemize}
 \item for each $i<\lambda$, $X_i \in \de$, and
 \item for any infinite $\sigma \subset \lambda$, we have $\bigcap_{i \in\sigma} X_i = \emptyset$
\end{itemize}
Equivalently, for any element $t \in I$, $t$ belongs to only finitely many of the sets $X_i$. 
\end{defn}

\begin{defn} \emph{(Good ultrafilters, Keisler \cite{keisler-1})}
\label{good-filters}
The filter $\de$ on $I$ is said to be \emph{$\mu^+$-good} if every $f: \fss(\mu) \rightarrow \de$ has
a multiplicative refinement, where this means that for some $f^\prime : \fss(\mu) \rightarrow \de$,
$u \in \fss(\mu) \implies f^\prime(u) \subseteq f(u)$, and $u,v \in \fss(\mu) \implies
f^\prime(u) \cap f^\prime(v) = f^\prime(u \cup v)$.

Note that we may assume the functions $f$ are monotonic.

$\de$ is said to be \emph{good} if it is $|I|^+$-good.
\end{defn}

Keisler proved the existence of $\lambda^+$-good countably incomplete ultrafilters on $\lambda$ assuming $2^\lambda = \lambda^+$. 
Kunen \cite{kunen} gave a proof in ZFC, which introduced the technique of independent families of functions.

The crucial model-theoretic property of regularity is the following: for a regular ultrafilter on $\lambda$ and
a complete countable theory $T$, $\lambda^+$-saturation of the ultrapower $M^\lambda/\de$ 
does not depend on the choice of base model $M$. 

\begin{thm-lit} \label{backandforth} \emph{(Keisler \cite{keisler} Corollary 2.1 p. 30; see also Shelah \cite{Sh:c}.VI.1)}
Suppose that $M_0 \equiv M_1$, the ambient language is countable $($for simplicity$)$, and
$\de$ is a regular ultrafilter on $\lambda$.
Then ${M_0}^\lambda/\de$ is $\lambda^+$-saturated iff ${M_1}^\lambda/\de$
is $\lambda^+$-saturated.
\end{thm-lit} 

Thus Keisler's order is genuinely a statement about the relative complexity of 
[complete, countable] theories, independent of the choice of base models $M_1, M_2$:

\begin{defn} \label{keisler-order} \emph{(Keisler 1967 \cite{keisler})} 
Let $T_1, T_2$ be complete countable first-order theories. 
\begin{enumerate}
\item \emph{$T_1 \tlf_\lambda T_2$} when for all regular ultrafilters $\de$ on $\lambda$,
all $M_1 \models T_1$, all $M_2 \models T_2$, if $M^{\lambda}_2/\de$
is $\lambda^+$-saturated then $M^{\lambda}_1/\de$ is $\lambda^+$-saturated. 
\item \emph{(Keisler's order)} 
$T_1 \tlf T_2$ if for all infinite $\lambda$, $T_1 \kleq_\lambda T_2$.
\end{enumerate}
\end{defn}

\br

An account of current work on Keisler's order is given in the companion paper \cite{MiSh:996}. 
As that history is not needed for reading the current paper,
we quote the most relevant theorem in \S \ref{s:review} below and refer interested readers to \cite{MiSh:996} sections 1-4. 

\br
\subsection{A translation between model theory and ultrafilters} \label{s:review}
The following theorem of known correspondences between properties of regular ultrafilters and properties of 
first-order theories is quoted from the companion paper \cite{MiSh:996}. 
Conditions (1), (2) are defined in \S \ref{s:flex-mu}, (3) in Convention \ref{conventions}[\ref{cv:good}] below, (4) in \S \ref{s:flexible}, 
(5) in \cite{mm4} (not used here) and (6) in \S \ref{s:basic}.   

\begin{thm-lit} \label{formula-corr} \emph{(Malliaris and Shelah \cite{MiSh:996} \S 4 Theorem F)} 
In the following table, for rows \emph{(1),(3),(5),(6)} the regular ultrafilter $\de$ on $\lambda$ fails to have 
the property in the left column if and only if it omits a type in every formula with the property in the right column.
For rows \emph{(2)} and \emph{(4)}, if $\de$ fails to have the property on the left then it omits a type 
in every formula with the property on the right.

\br
\begin{tabular}{lcl}
\br \textbf{Set theory: properties of filters} & & \textbf{Model theory: properties of formulas} \\
 \emph{(1)} $\mu(\de) \geq \lambda^+$ & & A. finite cover property \\
 \emph{(2)} $\lcf(\aleph_0, \de) \geq \lambda^+$ & ** & B. order property \\
 \emph{(3)} good for $T_{rg}$ & & C. independence property \\
 \emph{(4)} flexible, i.e. $\lambda$-flexible	& ** & D. non-low \\
 \emph{(5)} good for equality & & E. $TP_2$ \\
 \emph{(6)} good, i.e. $\lambda^+$-good&  & F. strict order property \\ 
\end{tabular}
\end{thm-lit}

\begin{proof} 
(1) $\leftrightarrow$ (A) Shelah \cite{Sh:c}.VI.5. Note that the f.c.p. was defined in Keisler \cite{keisler}.

(2) $\leftarrow$ (B) Shelah \cite{Sh:c}.VI.4.8.

(3) $\leftrightarrow$ (C) Straightforward by q.e., see \cite{mm4}, and \cite{mm5} for the more general phenomenon.  

(4) $\leftarrow$ (D) Malliaris \cite{mm-thesis}, see \S \ref{s:flexible} below.

(5) $\leftrightarrow$ (E) Malliaris \cite{mm4} \S 6, which proves the existence of a Keisler-minimum $TP_2$-theory, the theory $T^*_{feq}$ of
a parametrized family of independent (crosscutting) equivalence relations. 

(6) $\leftrightarrow$ (F) Keisler characterized the maximum class by means of good ultrafilters. 
Shelah proved in \cite{Sh:c}.VI.2.6 that any theory with the strict order property is maximum
in Keisler's order. (In fact, $SOP_3$ suffices \cite{Sh500}). 
A model-theoretic characterization of the maximum class is not known.
\end{proof}

The known arrows between properties (1)-(6) are given in \cite{MiSh:996} Theorem 4.2. In particular, 
the arrow (4) $\rightarrow$ (1) is from \S \ref{s:flex-mu} below.

\subsection{Flexible filters} \label{s:flexible}
We now give background on flexible filters, a focus of this paper. 
Flexible filters were introduced in Malliaris \cite{mm-thesis} and \cite{mm4}.
In the context of investigations into saturation of regular ultrapowers,
a natural question is whether and how first-order theories are sensitive to the sizes
of regularizing families:

\begin{defn} \emph{(Flexible ultrafilters, Malliaris \cite{mm-thesis}, \cite{mm4})}
\label{flexible}
We say that the filter $\de$ is $\lambda$-flexible if for any $f \in {^I \mathbb{N}}$ with
$n \in \mathbb{N} \implies n <_{\de} f$, we can find $X_\alpha \in \de$ for $\alpha < \lambda$ such that 
for all $t \in I$
\[ f(t) \geq | \{ \alpha : t \in X_\alpha \}|  \]
Informally, given any nonstandard integer, we can find a $\lambda$-regularizing family below it.
\end{defn}

Alternatively, one could say that in $(\mch(\aleph_0), \epsilon)^\lambda/\de$, any $\lambda$ elements belong to a
pseudofinite set of arbitrarily small size [in the sense of the proof of Claim \ref{f2}]. 
It is useful to know that flexible is equivalent to the set-theoretic ``o.k.'' 
(see\cite{MiSh:996} Appendix and history there).
 
The importance of flexibility for our construction comes from the following lemma, which gives one of the arrows in 
Theorem \ref{formula-corr} above.

\begin{lemma} \emph{(Malliaris \cite{mm4} Lemma 8.7)} \label{mm-thesis-lemma}  
\label{flexible-low}
Let $T$ be non-low, $M \models T$ and let $\de$ be a $\lambda$-regular ultrafilter on $I$, $|I| =\lambda$ which is not $\lambda$-flexible. Then
$N:=M^\lambda/\de$ is not $\lambda^+$-saturated.  
\end{lemma}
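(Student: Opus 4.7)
The plan is to build a partial $\vp$-type of cardinality $\lambda$ in $N$ that is consistent but unrealized, where $\vp$ witnesses non-lowness of $T$ and the non-realization is forced by the failure of flexibility. By Theorem \ref{backandforth}, saturation of $N$ depends only on $T$, so I first replace $M$ by an elementarily equivalent $\lambda^+$-saturated model.

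Fix $\vp(x;y)$ non-low. For each $k < \omega$ non-lowness yields an indiscernible sequence of $\vp$-parameters that is $k$-consistent but inconsistent; by indiscernibility and compactness, passing to the minimum inconsistency size $m > k$ gives an indiscernible sequence which is $(m-1)$-consistent and $m$-inconsistent. Hence $S := \{n < \omega : \text{some indiscernible sequence of } \vp\text{-parameters is } n\text{-consistent and } (n+1)\text{-inconsistent}\}$ is cofinal in $\omega$, and by $\lambda^+$-saturation of $M$, for each $n \in S$ one finds inside $M$ such a sequence of length $\lambda$ with pairwise distinct terms. Let $f \in {}^I\omega$ be $\de$-nonstandard witnessing non-flexibility: no $\langle X_\alpha : \alpha < \lambda \rangle \subseteq \de$ satisfies $|\{\alpha : t \in X_\alpha\}| \leq f(t)$ for all $t$. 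For each $t$ with $f(t) \geq \min S$ set $n(t) := \max(S \cap [0, f(t)])$; then $t \mapsto n(t)$ is $\de$-nonstandard. Choose in $M$ a sequence $\langle \bar{b}^t_\alpha : \alpha < \lambda \rangle$ of distinct elements which is $n(t)$-consistent and $(n(t)+1)$-inconsistent via $\vp$ (arbitrary on the null set where this fails). Set $\bar{b}_\alpha := [\bar{b}^t_\alpha]_\de \in N$ and $p(x) := \{\vp(x;\bar{b}_\alpha) : \alpha < \lambda\}$.

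For finite satisfiability, given any $\alpha_1, \ldots, \alpha_m < \lambda$, the set $\{t \in I : M \models \exists x \bigwedge_{i \leq m} \vp(x; \bar{b}^t_{\alpha_i})\}$ contains $\{t : n(t) \geq m\} \in \de$, so $p$ is consistent in $N$. Suppose towards contradiction that $c \in N$ realizes $p$ and set $X_\alpha := \{t \in I : M \models \vp(c[t]; \bar{b}^t_\alpha)\} \in \de$. For each $t$, distinctness of $\{\bar{b}^t_\alpha : \alpha < \lambda\}$ together with $(n(t)+1)$-inconsistency forces $|\{\alpha : t \in X_\alpha\}| \leq n(t) \leq f(t)$. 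Hence $\langle X_\alpha : \alpha < \lambda \rangle$ is a $\lambda$-regularizing family fitting below $f$, contradicting the choice of $f$; so $p$ is unrealized and $N$ is not $\lambda^+$-saturated.

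The main technical obstacle is the extraction from non-lowness of indiscernible sequences with inconsistency threshold exactly $n+1$ for cofinally many $n$, and the simultaneous lift of these to length $\lambda$ with pairwise distinct parameters inside a single saturated base model. Distinctness in each fiber is essential: without it, repeated fiber values could inflate the count $|\{\alpha : t \in X_\alpha\}|$ past $n(t)$ and break the contradiction with non-flexibility.
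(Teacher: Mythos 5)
The paper gives no proof of this lemma---it is quoted from Malliaris \cite{mm4}, Lemma 8.7, and used as a black box---so there is nothing in the source to compare against line by line; I assess your argument on its own terms. Your reconstruction is correct and is the standard route: from non-lowness of $\vp$ extract a cofinal set $S\subseteq\omega$ of exact inconsistency thresholds, use the bad $\de$-nonstandard $f$ witnessing non-flexibility to select $n(t)\in S$ with $n(t)\le f(t)$ fiberwise, plant in each fiber a length-$\lambda$ indiscernible sequence of pairwise distinct $\vp$-parameters with threshold $n(t)+1$, and observe that any realization of the resulting $\vp$-type hands you, via \Lost theorem, a $\lambda$-family of $\de$-sets whose point-counts are bounded by $n(t)\le f(t)$, contradicting the choice of $f$.

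Two small points are worth closing in a final write-up, neither a genuine gap. First, Definition \ref{flexible} quantifies over \emph{all} $t\in I$, not $\de$-almost all; your $X_\alpha$ satisfy $|\{\alpha: t\in X_\alpha\}|\le f(t)$ only on the $\de$-set $B:=\{t: f(t)\ge \min S\}$, so one should replace each $X_\alpha$ by $X_\alpha\cap B$ (still in $\de$, giving count $0$ off $B$) before invoking the contradiction. Second, the assertion that a $\lambda^+$-saturated $M\models T$ contains, for each $n\in S$, a length-$\lambda$ indiscernible sequence with distinct terms that is $n$-consistent and $(n{+}1)$-inconsistent via $\vp$ deserves one more sentence: it follows from the usual forth-construction, realizing at stage $\alpha<\lambda$ the type of the $\alpha$th element over the fewer than $\lambda$ elements already chosen, consistency being guaranteed by the existence of a length-$\omega$ such sequence in some model of $T$. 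Your closing remark about distinctness being essential is exactly right: without it the $(n(t){+}1)$-inconsistency would not bound $|\{\alpha: t\in X_\alpha\}|$, since repeated fiber values could be counted many times.
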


\begin{cor} \label{good-flexible} Flexibility is a non-trivial hypothesis, i.e.
\begin{enumerate}
\item Not all regular ultrafilters are flexible.
\item Some regular ultrafilters are flexible. In particular, 
if $\de$ is a regular ultrafilter on $\lambda$ and $\de$ is $\lambda^+$-good then $\de$ is $\lambda$-flexible. 
\end{enumerate}
\end{cor}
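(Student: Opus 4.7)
The plan is to prove (2) by packaging flexibility into a single monotonic distribution and invoking $\lambda^+$-goodness, and to prove (1) via a concrete Fubini product. The content lies in (2); for (1) a single witness suffices. The main subtlety in (2) is realizing that the natural map $g: \fss(\lambda) \to \de$ encoding both the regularization and the bound on $f$ really does land in $\de$, which uses nonstandardness of $f$ at every level.

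For (2), fix a $\lambda$-regularizing family $\langle Y_\alpha : \alpha < \lambda \rangle \subseteq \de$ and $f \in {}^I \mathbb{N}$ with $n <_\de f$ for all $n \in \mathbb{N}$. Set
\[ g(u) \ = \ \bigcap_{\alpha \in u} Y_\alpha \ \cap \ \{\, t \in I : f(t) \geq |u| \,\}, \qquad u \in \fss(\lambda). \]
The first factor lies in $\de$ by closure under finite intersection, the second by nonstandardness of $f$, so $g : \fss(\lambda) \to \de$, and clearly $g$ is monotonically decreasing. Apply $\lambda^+$-goodness to obtain a multiplicative refinement $g' : \fss(\lambda) \to \de$, and set $X_\alpha := g'(\{\alpha\})$. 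For any $t \in I$ and any finite $u \subseteq \{\alpha : t \in X_\alpha\}$ we have $t \in \bigcap_{\alpha \in u} g'(\{\alpha\}) = g'(u) \subseteq g(u)$, forcing $f(t) \geq |u|$. Hence $|\{\alpha : t \in X_\alpha\}| \leq f(t)$, as flexibility demands.

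For (1), assume $\lambda > \aleph_0$. Let $\de_1$ be any regular ultrafilter on $\lambda$, let $\ee$ be a nonprincipal ultrafilter on $\omega$, and form the Fubini product $\de := \de_1 \times \ee$ on $I := \lambda \times \omega$. A regularizing family $\langle Y_\alpha \rangle$ for $\de_1$ lifts to one for $\de$ via $Y_\alpha \mapsto Y_\alpha \times \omega$, and the function $f(\alpha, n) := n$ is nonstandard modulo $\de$. Suppose toward contradiction that $\{X_\beta : \beta < \lambda\} \subseteq \de$ witnessed flexibility below $f$, and write $X_\beta^{(n)} := \{\alpha : (\alpha, n) \in X_\beta\}$. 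For each fixed $n$, if $n+1$ distinct $X_\beta^{(n)}$ belonged to $\de_1$, their intersection would be nonempty, producing an $\alpha$ at height $n$ covered by $n+1$ of the $X_\beta$ in violation of $f(\alpha, n) = n$. Thus $|\{\beta : X_\beta^{(n)} \in \de_1\}| \leq n$. But $X_\beta \in \de$ requires $\{n : X_\beta^{(n)} \in \de_1\} \in \ee$, in particular nonempty; choosing some such $n_\beta$ for each $\beta$ exhibits $\lambda$ as a countable union of sets each of size at most $n$, contradicting $\lambda > \aleph_0$. The heart of this half is the pigeonhole observation that a $\lambda$-regular ultrafilter cannot concentrate too many sets of bounded multiplicity at a fixed level of an $\omega$-fibration.
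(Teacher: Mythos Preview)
Your proofs are correct, but both parts take a different route from the paper.

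For (2), the paper argues indirectly: a $\lambda^+$-good regular ultrafilter saturates every countable theory, in particular every non-low theory, and Lemma~\ref{flexible-low} then forces flexibility. The paper does remark that ``one can prove this directly,'' and your argument is exactly that direct proof: encode both the regularizing family and the cardinality bound into a single monotone map $g:\fss(\lambda)\to\de$, pull a multiplicative refinement out of goodness, and read off the $X_\alpha$. This is cleaner and self-contained, avoiding any appeal to saturation or non-low theories. (Note that the factor $\bigcap_{\alpha\in u}Y_\alpha$ is harmless but not actually needed; the factor $\{t:f(t)\geq|u|\}$ alone already makes the argument go through.)

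For (1), the paper invokes the structure of Keisler's order---the existence of a minimum class not containing the non-low theories---together with Lemma~\ref{flexible-low}. Your Fubini-product construction is entirely different: you exhibit a concrete regular ultrafilter on $\lambda\times\omega$ (for $\lambda>\aleph_0$) in which the projection $f(\alpha,n)=n$ is nonstandard but admits no regularizing family below it, via a direct pigeonhole on horizontal slices. This is more elementary and more explicit than the paper's approach, which leans on substantial classification-theoretic background; on the other hand, the paper's argument situates the phenomenon within the broader picture of Theorem~\ref{formula-corr}.
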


\begin{proof}
(1) By the fact that there is a minimum class in Keisler's order which does not include the non-low theories. 

(2) One can prove this directly, or note that since a $\lambda^+$-good 
ultrafilter on $\lambda$ saturates \emph{any} countable theory, in particular 
any non-low theory, it must be flexible by Lemma \ref{flexible-low}. 
\end{proof}

We conclude by describing the known model-theoretic strength of flexibility.

\begin{fact}
If $\de$ is not flexible then $M^\lambda/\de$ is not $\lambda^+$-saturated whenever $Th(M)$ is not simple or simple but not low. 
\end{fact}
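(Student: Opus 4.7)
The fact decomposes along the disjunction in its hypothesis, so my plan is to treat the two cases separately and reduce both to, or parallel, Lemma \ref{flexible-low}. The simple-but-not-low case is immediate: it is precisely Lemma \ref{flexible-low} verbatim, with no additional work. So I would focus on the non-simple case, where the argument must run parallel to Lemma \ref{flexible-low} with the tree property standing in for the non-low behavior.

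Suppose $T = \mathrm{Th}(M)$ is not simple; fix a formula $\varphi(x;\bar y)$ with the tree property and, by Shelah's dichotomy, refine to a witness of either $TP_2$ or $TP_1$ ($\equiv SOP_2$). In the $TP_2$ case, there is an array $\langle a^i_j : i, j < \omega\rangle$ in a sufficiently saturated model of $T$ with each row indiscernible, the row set $\{\varphi(x; a^i_j) : j < \omega\}$ $k$-inconsistent for a fixed $k$, and every system of representatives $\langle a^i_{h(i)} : i<\omega\rangle$ consistent. Using $\lambda$-regularity of $\de$ I would transfer this into $M^I/\de$ so that the rows become indexed by $\alpha < \lambda$ and each row has length a fixed nonstandard $f \in \mathbb{N}^I/\de$. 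Choosing $f$ to witness the failure of flexibility (so that no $\lambda$-regularizing family fits pointwise below $f$), the partial type asserting that $x$ is $\varphi$-linked to one element in each row is finitely consistent but unrealized: any realization, by reading off which column of each row is hit and applying the $k$-inconsistency within rows, would yield a $\lambda$-regularizing family bounded by $f$, contradicting the choice of $f$.

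The $TP_1$/$SOP_2$ case is handled by an analogous construction using a tree $\langle a_\eta : \eta \in \omega^{<\omega}\rangle$ of parameters with consistent branches and pairwise inconsistent incomparable nodes, in place of the array. I expect the $TP_1$ subcase to be the main technical obstacle: unlike $TP_2$, where the ``width'' of a row cleanly matches a nonstandard integer, in $TP_1$ one must organize an antichain of incomparable nodes whose size is controlled by the non-flexibility witness $f$ while preserving the uniform inconsistency needed for the final contradiction. This bookkeeping is delicate but structurally mirrors the proof of Lemma \ref{flexible-low} for non-low theories, with a level-by-level antichain playing the role of the indiscernible sequence of parameters with unboundedly large inconsistency witness.
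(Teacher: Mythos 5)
Your decomposition matches the paper's exactly: the paper splits the hypothesis into (a) $T$ simple but not low, (b) $T$ not simple with $TP_1$, (c) $T$ not simple with $TP_2$, and then handles case (a) by Lemma \ref{flexible-low} and cases (b),(c) by citing \cite{MiSh:998} and \cite{mm4} respectively. You reduce (a) to Lemma \ref{flexible-low} correctly, and you correctly invoke the $TP_1$/$TP_2$ dichotomy for non-simple theories.

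The difference is that the paper does \emph{not} reprove cases (b) and (c) --- it treats them as black-box citations --- while you attempt proof sketches for both. This matters for evaluating what you wrote. The $TP_1$ (equivalently $SOP_2$) case is not a bookkeeping variant of Lemma \ref{flexible-low} that one can safely defer; it is the principal result of the separate paper \cite{MiSh:998}, and your own acknowledgment that this case is ``the main technical obstacle'' is accurate but understates how much is being waved at. For the $TP_2$ case, your sketch (transfer an array, read a regularizing family below a non-flexibility witness $f$ off a hypothetical realization) runs structurally parallel to the non-low argument, but there is a real mismatch: in the non-low case the inconsistency index $k_t$ of the row at coordinate $t$ is allowed to grow with $t$ and is calibrated against $f(t)$, which is exactly what produces a regularizing family below $f$. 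In a $TP_2$ array the row inconsistency is a \emph{fixed} finite $k$, so a realization of your type bounds the number of hit rows at each coordinate by a constant rather than by $f(t)$; this does not directly yield a regularizing family below $f$. The argument in \cite{mm4} goes instead through the Keisler-minimum $TP_2$ theory $T^*_{feq}$ and the ``good for equality'' property, not through a direct imitation of Lemma \ref{flexible-low}. So the high-level case split is right and matches the paper, but the two non-simple subcases are substantive cited theorems rather than routine adaptations, and the $TP_2$ sketch as written would need to be reorganized around $T^*_{feq}$ to go through.
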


\begin{proof}
By Lemma \ref{mm-thesis-lemma} in the case where $T$ is not low; by Malliaris and Shelah
\cite{MiSh:998} in the case where $T$ has $TP_1$; by Malliaris \cite{mm4} in the case where $T$ has $TP_2$.
\end{proof}

\subsection{Independent families of functions and Boolean algebras} \label{s:constr}
We now give some preliminaries, notation and definitions for the construction of ultrafilters. 
We will follow the notation of \cite{Sh:c} Chapter VI, Section 3, and further details may be found there. 

We will make extensive use of independent families, which provide a useful gauge of the freedom left when building filters.

\begin{defn}
Given a filter $\de$ on $\lambda$, we say that a family $\eff$ of functions from $\lambda$ into $\lambda$ is
\emph{independent $\mod \de$} if for every $n<\omega$, distinct $f_0, \dots f_{n-1}$ from $\eff$ and choice of $j_\ell \in \rn(f_\ell)$,
\[ \{ \eta < \lambda ~:~ \mbox{for every $i < n, f_i(\eta) = j_i$} \} \neq \emptyset ~~~ \operatorname{mod} \de \]
\end{defn}

\begin{thm-lit} \label{thm-iff} \emph{(Engelking-Karlowicz \cite{ek} Theorem 3, Shelah \cite{Sh:c} Theorem A1.5 p. 656)}
For every $\lambda \geq \aleph_0$ there exists a family $\eff$ of size $2^\lambda$ with each $f \in \eff$ from 
$\lambda$ onto $\lambda$ such that $\eff$ is independent
modulo the empty filter \emph{(}alternately, by the filter generated by $\{ \lambda \}\emph{)}$. 
\end{thm-lit}

\begin{cor}
For every $\lambda \geq \aleph_0$ there exists a regular filter $\de$ on $\lambda$ and a family $\eff$ of size $2^\lambda$ which is independent
modulo $\de$.
\end{cor}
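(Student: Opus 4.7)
The plan is to produce both the filter and the independent family by a single product construction: take an index set $I$ of cardinality $\lambda$ with two coordinates, one which naturally carries a regularizing family, and one onto which the Engelking--Karlowicz family is pulled back. The key point is that these two roles can be separated so that neither interferes with the other.

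Concretely, set $I := \lambda \times \subs(\lambda)$, so $|I| = \lambda$ since $\lambda \geq \aleph_0$. For $\alpha < \lambda$ define
$$X_\alpha := \{ (\beta, s) \in I : \alpha \in s \}.$$
Any $(\beta, s) \in I$ lies in exactly $|s| < \aleph_0$ of the sets $X_\alpha$, so $\langle X_\alpha : \alpha < \lambda \rangle$ satisfies the second clause of Definition \ref{regular}. For any finite $\sigma \subseteq \lambda$ and any $\beta < \lambda$, the point $(\beta, \sigma)$ lies in $\bigcap_{\alpha \in \sigma} X_\alpha$, so the filter $\de$ generated by $\{X_\alpha : \alpha < \lambda\}$ is proper, and is therefore a regular filter on $I$ (identified with $\lambda$ via some fixed bijection).

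Next, apply Theorem \ref{thm-iff} to $\lambda$ to obtain $\eff_0 = \{ f_\xi : \xi < 2^\lambda \}$ with each $f_\xi : \lambda \to \lambda$ onto and $\eff_0$ independent modulo the trivial filter. Pull back to $I$ by setting $\tilde f_\xi(\beta, s) := f_\xi(\beta)$, and let $\eff := \{ \tilde f_\xi : \xi < 2^\lambda \}$. Since distinct $f_\xi$'s give distinct $\tilde f_\xi$'s, we have $|\eff| = 2^\lambda$, and each $\tilde f_\xi$ is onto $\lambda$.

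To conclude, I verify independence of $\eff$ modulo $\de$. A generic $\de$-positive basic set has the form $\bigcap_{\alpha \in \sigma} X_\alpha$ for some finite $\sigma \subseteq \lambda$. Given distinct $\tilde f_{\xi_1}, \dots, \tilde f_{\xi_n} \in \eff$ and values $j_i \in \rn(\tilde f_{\xi_i}) = \lambda$, the set
$$\{\, (\beta, s) \in I : \sigma \subseteq s \ \text{and} \ f_{\xi_i}(\beta) = j_i \text{ for each } i \leq n \,\}$$
factors as $\{\beta < \lambda : f_{\xi_i}(\beta) = j_i \text{ for all } i\} \times \{ s \in \subs(\lambda) : \sigma \subseteq s\}$. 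The first factor is nonempty by independence of $\eff_0$ modulo the trivial filter, and the second contains $\sigma$. Hence the product is nonempty, which shows $\eff$ is independent modulo $\de$. There is essentially no obstacle here; all the work is absorbed into the choice of product index set, which cleanly decouples regularization (second coordinate) from independence (first coordinate).
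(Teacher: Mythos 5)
Your proof is correct, and it is essentially the standard argument one would supply here; the paper omits the proof of this corollary, but the natural route is exactly the one you take: split the index set into a coordinate carrying the regularizing family (via $\subs(\lambda)$) and a coordinate onto which the Engelking--Karlowicz family is pulled back, so that independence is checked by a product argument. The only minor imprecision is the phrase ``a generic $\de$-positive basic set has the form $\bigcap_{\alpha\in\sigma}X_\alpha$''; what you mean (and what you actually use) is that to show a set is nonempty $\mod \de$ it suffices to intersect it with the finite intersections $\bigcap_{\alpha\in\sigma}X_\alpha$, since these generate $\de$. With that wording fixed, the argument is complete.
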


The following definition describes the basic objects of our ultrafilter construction.

\begin{defn} \emph{(Good triples, \cite{Sh:c} Chapter VI)} \label{good-triples}
Let $\lambda \geq \kappa \geq \aleph_0$, $|I| = \lambda$, $\de$ a regular filter on $I$, and 
$\gee$ a family of functions from $I$ to $\kappa$.
\begin{enumerate}
\item Let $\fin(\gee) = \{ h ~: ~h: [\gee]^{<\aleph_0} \rightarrow \kappa ~\mbox{and}~ g \in \dom(g) \implies h(g) \in \rn(g) \} $
\item For each $h \in \fin(\gee)$ let
\[ A_h = \{ t \in I ~:~ g \in \dom(g) \implies g(t) = h(g) \} \]
\item Let $\fin_s(\gee) = \{ A_h : h \in \fin(\gee) \}$ 
\item We say that triple $(I, \de, \gee)$ is $(\lambda, \kappa)$-pre-good when $I$, $\de$, $\gee$ are as given, and
for every $h \in \fin(\gee)$ we have that $A_h \neq \emptyset \mod \de$.
\item We say that $(I, \de, \gee)$ is $(\lambda, \kappa)$-good when $\de$ is maximal subject to being pre-good.
\end{enumerate}
\end{defn}

\begin{fact} \label{good-dense}
Suppose $(I, \de, \gee)$ is a good triple. Then $\fin_s(\gee)$ is dense in $\mcp(I) \mod \de$. 
\end{fact}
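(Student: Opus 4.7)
The plan is to use the maximality of $\de$ in the definition of good triple. Unfolding the density statement: we must show that for any $B \subseteq I$ with $B \neq \emptyset \mod \de$, there is some $h \in \fin(\gee)$ with $A_h \neq \emptyset \mod \de$ and $A_h \subseteq B \mod \de$; equivalently, $A_h \cap (I \setminus B) = \emptyset \mod \de$.

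The easy case is $B \in \de$: take $h$ to be the (unique) element of $\fin(\gee)$ with empty domain, so $A_h = I$, which lies in $\de$ (hence is nonempty mod $\de$ by pre-goodness, and is trivially contained in $B$ mod $\de$). So assume $B \notin \de$. Then no $C \in \de$ is contained in $B$, equivalently, $C \cap (I \setminus B) \neq \emptyset$ for every $C \in \de$, so the family $\de \cup \{I \setminus B\}$ has the finite intersection property and generates a proper filter $\de^* \supsetneq \de$ on $I$.

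Now apply maximality. Since $\de$ is maximal subject to $(I, \de, \gee)$ being pre-good, the strictly larger filter $\de^*$ cannot yield a pre-good triple with $\gee$: there exists $h \in \fin(\gee)$ with $A_h = \emptyset \mod \de^*$. By the definition of the filter generated by $\de \cup \{I \setminus B\}$, this means there is some $C \in \de$ with $A_h \cap C \cap (I \setminus B) = \emptyset$, i.e.\ $A_h \cap (I \setminus B) = \emptyset \mod \de$, i.e.\ $A_h \subseteq B \mod \de$. On the other hand, $A_h \neq \emptyset \mod \de$ because $(I, \de, \gee)$ itself \emph{is} pre-good. This $h$ is the desired witness.

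There is no real obstacle here: the whole argument is an application of the definition of maximality, with the one bookkeeping step being the observation that extending $\de$ by $I \setminus B$ (rather than by $B$) is what forces out a witness $A_h$ lying below $B$ mod $\de$.
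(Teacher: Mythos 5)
Your proof is correct. Since the paper states this as a quoted Fact without giving its own argument, there is no in-paper proof to compare against; but your approach is the standard one and is exactly right: pass to the filter generated by $\de \cup \{I \setminus B\}$, show it is proper (and strictly larger than $\de$, since $B \neq \emptyset \mod \de$ gives $I\setminus B \notin \de$), invoke maximality of $\de$ to extract an $h$ with $A_h = \emptyset \mod \de^*$, and unwind the generation of $\de^*$ to get $A_h \cap (I\setminus B) = \emptyset \mod \de$. One small remark: the preliminary case $B \in \de$ is genuinely needed and you handle it correctly, since when $B \in \de$ the family $\de \cup \{I\setminus B\}$ fails the finite intersection property and the main argument cannot start; also note that in that case any $A_h$ works (not just $A_h = I$) because every $A_h \subseteq B \mod \de$ then, but taking $h = \emptyset$ is the cleanest choice. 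Your observation that regularity of $\de^*$ (inherited from $\de \subseteq \de^*$) ensures the only way pre-goodness can fail is via some $A_h$ becoming null is the one point worth spelling out, and you implicitly handle it.
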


\begin{defn} \emph{(for more on Boolean algebras, see \cite{Sh:c} Definition 3.7 p. 358)} \label{d:ba-various}
\begin{enumerate}
 \item  A \emph{partition} in a Boolean algebra is a maximal set of pairwise disjoint non-zero elements. 
 \item For a Boolean algebra $B$, $CC(B)$ is the first regular cardinal $\lambda \geq \aleph_0$ such that
every partition of $B$ has cardinality $< \lambda$. 
\item An element $\mb$ of a Boolean algebra is \emph{based} on a partition $\mpp$ if $\maa \in \mpp$ implies
$\maa \subseteq \mb$ or $\mb \cap \maa = 0$. 
\item An element of a Boolean algebra $B$ is \emph{supported} 
by a set $\mpp$ of elements of $B$ if it is based on some partition $\mpp$ of $B$ with $\mpp \subseteq P$.
\end{enumerate}
\end{defn}

For completeness, we quote the following fact which will be used in the proof of Claim \ref{f2}. It
explains how the range of the independent families available directly reflects the amount of freedom 
(specifically, the size of a maximal disjoint family of non-small sets) remaining in the construction of the filter. 

\begin{fact} \label{mcc} \emph{(\cite{Sh:c} Claim 3.17(5) p. 359)}
Suppose $(I, \de, \gee)$ is a good triple, where for at least one $g \in \gee$,
$|\rn(g)| \geq \aleph_0$, or alternatively $\{ g \in \gee : | \rn(g) | > 1 \}$ is infinite. 
Then $CC(B(D))$ is the first regular $\nu > \aleph_0$ such that $g \in \gee \implies
|\rn(g)| < \nu$. Moreover, if $\lambda \geq \nu$ is regular, $A_i \neq \emptyset \mod \de$ for $i<\lambda$, then 
there is $S \subseteq \lambda$, $|S| = \lambda$ such that for $n<\omega$ and distinct $i(\ell) \in S$, we have that
$\bigcap_{\ell<n} A_{i(\ell)} \neq \emptyset \mod \de$. 
\end{fact}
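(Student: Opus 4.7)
My plan is to compute $CC(B(D))$ by establishing matching upper and lower bounds, and then to extract the set $S$ in the moreover clause using the same combinatorial engine that delivers the upper bound, namely the $\Delta$-system lemma combined with a pigeonhole on the values of finitely many $g \in \gee$.

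For the lower bound $CC(B(D)) \geq \nu$, I would use that for each finite $\Sigma \subseteq \gee$, the family $\{A_h : h \in \fin(\gee),\ \dom(h) = \Sigma\}$ is a partition of $I$ into $\prod_{g \in \Sigma}|\rn(g)|$ pieces, each nonzero modulo $\de$ by pre-goodness, which descends to a partition of $B(D)$ of the same size. Given any regular $\mu < \nu$, minimality of $\nu$ forces some $g \in \gee$ with $|\rn(g)| \geq \mu$ (otherwise $\mu$ would itself bound every $|\rn(g)|$, contradicting that $\nu$ is the first such regular), and applying the construction with $\Sigma = \{g\}$ yields a partition of $B(D)$ of size $\geq \mu$. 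Hence no regular cardinal strictly below $\nu$ bounds every partition.

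For the upper bound $CC(B(D)) \leq \nu$, suppose toward contradiction that $\{b_i : i < \mu\}$ is a partition of $B(D)$ with $\mu \geq \nu$ regular. By Fact \ref{good-dense}, for each $i$ pick $h_i \in \fin(\gee)$ with $\emptyset \neq A_{h_i} \subseteq b_i \mod \de$. The $\Delta$-system lemma, legitimate since $\mu \geq \nu > \aleph_0$ is regular and each $\dom(h_i)$ is finite, yields $S_0 \subseteq \mu$ with $|S_0| = \mu$ so that $\{\dom(h_i) : i \in S_0\}$ forms a $\Delta$-system with root $\Sigma$. The restrictions $h_i \rstr \Sigma$ take values in $\prod_{g \in \Sigma} \rn(g)$, a set of size $< \nu \leq \mu$, so pigeonhole on the regular uncountable $\mu$ produces $S_1 \subseteq S_0$ of size $\mu$ on which these restrictions all coincide. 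For any $i \neq j$ in $S_1$, the union $h_i \cup h_j$ is then a well-defined element of $\fin(\gee)$, and pre-goodness gives $A_{h_i} \cap A_{h_j} = A_{h_i \cup h_j} \neq \emptyset \mod \de$, contradicting disjointness of $b_i$ and $b_j$.

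The moreover clause follows from precisely this mechanism, applied to the sequence $\langle A_i : i < \lambda \rangle$ of nonzero sets rather than to a partition. Using density, pick $h_i \in \fin(\gee)$ with $\emptyset \neq A_{h_i} \subseteq A_i \mod \de$; extract via the $\Delta$-system lemma plus pigeonhole a set $S \subseteq \lambda$ of size $\lambda$ on which the $\dom(h_i)$ form a $\Delta$-system with common root $\Sigma$ and the restrictions $h_i \rstr \Sigma$ all agree. For any $n < \omega$ and distinct $i(0), \dots, i(n-1) \in S$, the parts of the domains outside $\Sigma$ are pairwise disjoint while the restrictions to $\Sigma$ coincide, so $\bigcup_{\ell<n} h_{i(\ell)} \in \fin(\gee)$ and pre-goodness gives $\bigcap_{\ell<n} A_{h_{i(\ell)}} = A_{\bigcup_\ell h_{i(\ell)}} \neq \emptyset \mod \de$, whence $\bigcap_{\ell<n} A_{i(\ell)} \neq \emptyset \mod \de$. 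The delicate point throughout is the regularity of $\mu$ and $\lambda$: this is exactly what makes the pigeonhole step on $\prod_{g \in \Sigma} \rn(g)$ go through, and is the ultimate reason the statement is phrased in terms of regular cardinals.
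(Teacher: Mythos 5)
The paper cites this fact from \cite{Sh:c} without reproducing a proof, so there is no in-paper argument to compare against; the proposal must be judged on its own terms. Your $\Delta$-system-plus-pigeonhole mechanism is the standard engine for this result, and the upper bound $CC(B(D)) \leq \nu$ and the moreover clause are both handled correctly: regularity of $\mu$ (resp.\ $\lambda$) justifies the $\Delta$-system lemma and the pigeonhole on the $<\nu$-sized value set $\prod_{g \in \Sigma}\rn(g)$, the resulting compatibility of the $h_i$ makes $\bigcup_\ell h_{i(\ell)} \in \fin(\gee)$, and pre-goodness supplies the nonzero intersection.

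There is, however, a genuine gap in the lower bound. You claim that for any regular $\mu < \nu$, minimality of $\nu$ forces some $g \in \gee$ with $|\rn(g)| \geq \mu$. That reasoning is valid for regular $\mu$ with $\aleph_0 < \mu < \nu$, but it fails at $\mu = \aleph_0$: since $\nu$ is defined as the first regular cardinal \emph{strictly above} $\aleph_0$ bounding the ranges, the fact that $\aleph_0$ also bounds every $|\rn(g)|$ contradicts nothing. Concretely, in the case where the hypothesis holds only via its second disjunct --- every $\rn(g)$ is finite, but $\{g \in \gee : |\rn(g)| > 1\}$ is infinite --- your construction $\{A_h : \dom(h) = \Sigma\}$ with $\Sigma$ finite produces only finite partitions, so it never certifies $CC(B(D)) > \aleph_0$. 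This is exactly the case the second disjunct of the hypothesis is there to handle, and your proof never invokes it. The repair is easy: pick distinct $g_1, g_2, \dots$ with $|\rn(g_n)| \geq 2$, choose $a_n \neq b_n$ in $\rn(g_n)$, and let $h_n$ have domain $\{g_1,\dots,g_n\}$ with $h_n(g_i) = a_i$ for $i < n$ and $h_n(g_n) = b_n$; the sets $A_{h_n}$ are pairwise disjoint and each nonzero mod $\de$ by pre-goodness, giving an infinite antichain and hence $CC(B(D)) \geq \aleph_1$. With that patch, the argument is complete.
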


Finally, the following lemma gives the necessary scaffolding and guarantees 
that the end product of our construction will be an ultrafilter.

\begin{fact} \label{uf} \emph{(\cite{Sh:c} Lemma 3.18 p. 360)}
Suppose that $\de$ is a maximal filter modulo which $\eff \cup \gee$ is independent, $\eff$ and $\gee$ are disjoint,
the range of each $f \in \eff \cup \gee$ is of cardinality less than $\operatorname{cof}(\alpha)$, $\operatorname{cof}(\alpha) > \aleph_0$,
$\eff = \bigcup_{\eta < \alpha} \eff_\eta$, the sequence $\langle \eff_\eta : \eta < \alpha \rangle$ 
is increasing, and let $\eff^\eta = \eff \setminus \eff_\eta$.
Suppose, moreover, that $D_\eta$ ($\eta<\alpha$) is an increasing sequence of filters which satisfy:

\begin{enumerate}
\item[(i)] Each $D_\eta$ is generated by $\de$ and sets supported $\mod \de$ by $\fin_s(\eff_\eta \cup \gee)$.
\item[(ii)] $\eff^\eta \cup \gee$ is independent modulo $D_\eta$. 
\item[(iii)] $D_\eta$ is maximal with respect to \emph{(i), (ii)}. 
\end{enumerate}
Then
\begin{enumerate}
\item $D^* := \bigcup_{\eta < \alpha} D_\eta$ is a maximal filter modulo which $\gee$ is independent.
\item If $\gee$ is empty, then $D^*$ is an ultrafilter, and for each $\eta < \alpha$, \emph{(ii)} is satisfied whenever $D_\eta$ is
non-trivial and satisfies \emph{(i)}.
\item If $\eta < \alpha$ and we are given $D^\prime_\eta$ satisfying \emph{(i), (ii)} we can extend it to a filter satisfying
\emph{(i), (ii), (iii)}. 
\item If $f \in \eff^\eta$ then $\langle f^{-1}(t) /D_\eta : t \in Range(f) \rangle$ is a partition in $B(D_\eta)$.  
\end{enumerate}
\end{fact}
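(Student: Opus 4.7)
The plan is to extract one structural principle from the hypothesis and apply it repeatedly. Since $\de$ is maximal modulo which $\eff \cup \gee$ is independent, by the standard density argument (Fact \ref{good-dense} applied to the good triple $(I,\de,\eff\cup\gee)$), every subset $A \subseteq I$ is, modulo $\de$, supported by $\fin_s(\eff' \cup \gee')$ for some \emph{finite} $\eff' \subseteq \eff$ and $\gee' \subseteq \gee$. Combined with the assumption that each $f \in \eff \cup \gee$ has range of size less than $\cf(\alpha)$ and $\cf(\alpha) > \aleph_0$, this lets us ``localize'': any finite subfamily of $\eff$ lies inside some $\eff_\eta$, so every subset of $I$ is supported (mod $\de$) by $\fin_s(\eff_\eta \cup \gee)$ for some $\eta < \alpha$. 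This localization is the engine of the whole proof.

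To prove (1), first $D^*$ is a filter as an increasing union of filters. To see $\gee$ is independent modulo $D^*$, suppose some basic combination $A_h$ with $\dom(h) \subseteq \gee$ becomes empty modulo $D^*$; then $A_h \cap X = \emptyset \bmod \de$ for some $X \in D^*$, hence some $X \in D_\eta$, contradicting (ii) for that $\eta$. For maximality, suppose $A \notin D^*$. By localization, $A$ is supported mod $\de$ by $\fin_s(\eff_\eta \cup \gee)$ for some $\eta$; then by the maximality clause (iii) at stage $\eta$, adjoining $A$ to $D_\eta$ would either already give $A \in D_\eta \subseteq D^*$ (contradiction) or destroy the independence of $\eff^\eta \cup \gee$ modulo $D_\eta$, and hence at the least destroy the independence of $\gee$ modulo the extension of $D^*$ by $A$. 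So $D^*$ is maximal with the stated property.

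For (2), if $\gee = \emptyset$ then ``maximal filter with $\gee$ independent'' is just ``ultrafilter'', so (1) yields $D^*$ ultra. The supplementary claim in (2) is that (ii) is automatic from (i) plus non-triviality: any basic combination $A_h$ with $\dom(h) \subseteq \eff^\eta$ is, modulo $\de$, independent of every set in $\fin_s(\eff_\eta)$ (since $\eff^\eta$ and $\eff_\eta$ are disjoint and $\eff$ was independent mod $\de$), so $A_h$ remains non-empty modulo $D_\eta$. Part (3) is a direct application of Zorn's lemma to the poset of filters containing $D'_\eta$ and satisfying (i), (ii); the supremum of a chain preserves both conditions since (i) is a generation property and (ii) is a statement about finite sets. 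Part (4) follows because $f \in \eff^\eta$ and $\eff^\eta$ is independent modulo $D_\eta$, so each $f^{-1}(t)$ is non-zero in $B(D_\eta)$; pairwise disjointness is immediate from $f$ being a function, and the partition property from $\bigcup_{t \in \rn(f)} f^{-1}(t) = I$.

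The main obstacle, as I see it, is not any single step but keeping the three layers of ``independence'' straight: independence of $\eff \cup \gee$ modulo $\de$, the residual independence of $\eff^\eta \cup \gee$ modulo each $D_\eta$, and the target independence of $\gee$ modulo $D^*$. The cofinality hypothesis $\cf(\alpha) > \aleph_0$ (together with the range-size bound on elements of $\eff$) is what bridges these layers, via the localization principle in the first paragraph; dropping it would invalidate the passage ``finitely many $f_i$ all lie in some $\eff_\eta$'' and, with it, both the maximality argument in (1) and the automatic independence claim in (2).
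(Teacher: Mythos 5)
The paper quotes this as Shelah's \emph{Classification Theory} VI.3.18 without reproducing the argument, so there is no in-paper proof to compare against; I assess the proposal on its own merits. The overall architecture (a localization principle applied stage by stage) is the right one, and your treatments of (2) and (3), and of the two easy parts of (1) (that $D^*$ is a filter, and that $\gee$ remains independent modulo $D^*$), are correct.

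There are, however, two concrete problems. The first is in the localization principle itself: you assert that Fact~\ref{good-dense} makes every $A\subseteq I$ supported modulo $\de$ by $\fin_s(\eff'\cup\gee')$ with $\eff',\gee'$ \emph{finite}, and then use ``any finite subfamily of $\eff$ lies in some $\eff_\eta$.'' The finiteness is false. Being supported means being based on a whole partition, and this partition can be infinite --- a single element of $\fin_s$ has finite ``domain,'' but a supporting partition need not be concentrated on finitely many functions (e.g.\ a set depending on the values of infinitely many two-valued $g\in\gee$). The correct derivation instead applies Fact~\ref{mcc}: the range-size hypothesis and $\cf(\alpha)>\aleph_0$ give $CC(B(\de))\leq\cf(\alpha)$, so a supporting partition has cardinality $<\cf(\alpha)$ and hence mentions $<\cf(\alpha)$ many functions of $\eff$, which therefore all lie in a single $\eff_\eta$ because $\langle\eff_\eta\rangle$ is increasing with length $\alpha$. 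As written, your paragraph cites the range-size hypothesis but never actually uses it --- a sign that a step is missing.

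The second and more serious problem is the ``hence'' in your maximality argument for (1). From (iii) you correctly obtain that adjoining $A$ to $D_\eta$ kills the independence of $\eff^\eta\cup\gee$, producing some $A_{h'}$ with $\dom(h')\subseteq\eff^\eta\cup\gee$ and $A_{h'}\cap A=\emptyset\bmod D_\eta$. But this is an obstruction involving $\eff^\eta$, not $\gee$ alone. Splitting $h'=h'_1\cup h'_2$ with $\dom(h'_1)\subseteq\eff^\eta$ and $\dom(h'_2)\subseteq\gee$, what you have is $A_{h'_1}\cap A_{h'_2}\cap A=\emptyset\bmod D_\eta$, whereas what you need is a failure of independence of $\gee$ alone modulo $D^*+A$, e.g.\ $A_{h'_2}\cap A=\emptyset$. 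Passing from the former to the latter requires ``factoring out'' $A_{h'_1}$, using that $A$ is supported by $\fin_s(\eff_\eta\cup\gee)$ and hence sits orthogonally to $\fin_s(\eff^\eta)$ inside $B(D_\eta)$; this is precisely the ingredient your sketch of (2) uses (in the special case $\gee=\emptyset$), and it is the real content of the lemma, but the proposal simply asserts the conclusion. A smaller instance of the same issue appears in (4): when $\rn(f)$ is infinite, the partition property in $B(D_\eta)$ does not follow from $\bigcup_t f^{-1}(t)=I$ alone (the filter is not complete enough to intersect infinitely many witnesses), and one again needs a density argument relative to the remaining independent functions.
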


A useful consequence of this machinery is the following general principle.
It is rigorously developed in the proof of \cite{Sh:c} Theorem VI.3.12 p. 357-366. Specifically, 
the formal statement is \cite{Sh:c} Claim 3.21 p. 363. 

\begin{fact} \label{fact-lcf} \emph{(Cofinality of the construction and lower-cofinality of $\aleph_0$, \cite{Sh:c} p. 363)}
In the notation of Fact \ref{uf} [note restrictions on cofinality of $\alpha$ there], 
suppose that $\gee = \emptyset$ and that we are given a sequence of functions
$f_\beta \in \eff^\eta \setminus \eff^{\eta+1}$ such that for every $n < \aleph_0$, 
$\{ t : n < f_\beta(t) < \aleph_0 \} \in \de_{\eta+1}$. Then for every ultrafilter
$\de_* \supseteq \de$, we have that $\lcf(\aleph_0, \de_*) = \cf(\alpha)$ and this is exemplified by 
the sequence $f_\beta/\de_*$ for $\beta < \alpha$.
\end{fact}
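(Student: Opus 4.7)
My plan is to show $\lcf(\aleph_0, \de_*) = \cf(\alpha)$ by establishing both inequalities, using the sequence $\langle f_\beta/\de_* : \beta < \alpha\rangle$ as the witness. Observe first that, since $\gee = \emptyset$, Fact \ref{uf}(2) gives $\de_* = D^* = \bigcup_{\eta < \alpha} D_\eta$ as the unique ultrafilter extending $\de$, and the hypothesis $\{t : n < f_\beta(t) < \aleph_0\} \in D_{\beta+1}$ immediately places each $f_\beta/\de_*$ in the nonstandard part of $(\omega, <)^I/\de_*$, bounded above by $\aleph_0$.

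The technical core is a \emph{Key Lemma}: whenever $g \colon I \to \omega$ satisfies $\{g > n\} \in D_\beta$ for every $n < \omega$, then $\{f_\beta < g\} \in D_{\beta+1}$. I would prove this by contradiction, leveraging the maximality clause in Fact \ref{uf}(iii). If $\{f_\beta < g\} \notin D_{\beta+1}$, then adding it must break the independence of $\eff^{\beta+1}$ modulo $D_{\beta+1}$, yielding a finite $F \subseteq \eff^{\beta+1}$, values $k_f \in \rn(f)$, and $X \in D_{\beta+1}$ with $X \cap A \subseteq \{f_\beta \geq g\}$ for $A := \bigcap_{f \in F}\{f = k_f\}$. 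Using the description of $D_{\beta+1}$'s generators in Fact \ref{uf}(i) together with the added constraints $\{f_\beta > n\} \in D_{\beta+1}$, I reduce $X$ to the form $X_0 \cap \{f_\beta > n\}$ with $X_0 \in D_\beta$ and $n < \omega$. Then $X_0 \cap A \cap \{f_\beta = n+1\}$ is non-null modulo $D_\beta$ (by independence of $\{f_\beta\} \cup \eff^{\beta+1}$ mod $D_\beta$, which follows from independence of $\eff^\beta$ mod $D_\beta$), but on it $f_\beta = n+1 \geq g$ forces $g \leq n+1$, placing the set inside $\{g \leq n+1\}$, which is null mod $D_\beta$ by hypothesis---a contradiction.

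Both bounds then follow. For $\lcf(\aleph_0, \de_*) \leq \cf(\alpha)$: given any nonstandard $g/\de_*$, use $\cf(\alpha) > \aleph_0$ and $\de_* = \bigcup_\eta D_\eta$ to locate $\eta^* < \alpha$ with $\{g > n\} \in D_{\eta^*}$ for every $n$; the Lemma then supplies $f_{\eta^*}/\de_* < g/\de_*$. Iterating along a cofinal $\langle \beta_i : i < \cf(\alpha)\rangle$ in $\alpha$---invoking the Lemma with $g$ taken to be an appropriate earlier $f_{\beta_j}$---produces a strictly decreasing coinitial sequence of length $\cf(\alpha)$. For $\lcf(\aleph_0, \de_*) \geq \cf(\alpha)$: a hypothetical coinitial sequence $\langle h_j/\de_* : j < \theta\rangle$ with $\theta < \cf(\alpha)$ would give $\eta^{**} := \sup_{j < \theta,\, n < \omega} \eta_j^n < \alpha$, and the Lemma would then force $f_{\eta^{**}+1}/\de_* < h_j/\de_*$ for every $j$, contradicting coinitiality.

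The main obstacle is the Key Lemma, specifically (i) the reduction of an arbitrary $X \in D_{\beta+1}$ to the normal form $X_0 \cap \{f_\beta > n\}$, which requires careful use of how $D_{\beta+1}$ is generated over $D_\beta$ together with the added sets $\{f_\beta > n\}$; and (ii) handling functions $g$ whose level sets $\{g > n\}$ are not manifestly supported by $\fin_s(\eff_\beta)$---here one invokes Fact \ref{good-dense} (density of supported sets in $\mcp(I)/\de$) to replace $g$ by a $\de_*$-equivalent function whose level sets are properly supported at the relevant stage.
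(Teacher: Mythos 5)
The paper does not prove this Fact; it is quoted from Shelah's \emph{Classification Theory} (cited as [Sh:c] Claim 3.21, p.~363), so there is no in-paper argument to compare against. Evaluated on its own terms, your high-level plan --- reducing both inequalities to the Key Lemma ``$g$ nonstandard mod $D_\beta$ implies $\{f_\beta < g\} \in D_{\beta+1}$,'' which yields both that the $f_\eta/\de_*$'s decrease and that they are coinitial --- is exactly the right skeleton, and the final contradiction (landing inside $\{g \leq n+1\}$, which is $D_\beta$-null) is correct \emph{given} the normal form. The obstacles you flag at the end are, however, genuine gaps and not merely details.

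First, the appeal to maximality in Fact \ref{uf}(iii) is incomplete. That condition says $D_{\beta+1}$ is maximal with respect to (i) \emph{and} (ii); so from ``$\{f_\beta < g\} \notin D_{\beta+1}$'' one may conclude only that adjoining it either destroys properness, or destroys (i) (i.e.\ the resulting filter is no longer generated by $\de$ plus $\fin_s(\eff_{\beta+1})$-supported sets), or destroys (ii). You jump directly to ``must break independence.'' Ruling out the (i)-failure does require that $\{f_\beta < g\}$ be supported by $\fin_s(\eff_{\beta+1})$, which in turn requires $g$ itself to be supported by a single partition in $\fin_s(\eff_\beta)$. Your proposed fix --- replace $g$ by a ``$\de_*$-equivalent'' function --- works at the wrong level: $\de_*$-equivalence is far too coarse, since you need the replacement to have its level sets in $D_\beta$ and be supported already at stage $\beta$. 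What one actually wants is a supported minorant $g' \leq g$ with $\{g' > n\} \in D_\beta$ for all $n$, built from $\fin_s(\eff_\beta)$-supported members $B_n \in D_\beta$ with $B_n \subseteq \{g > n\}$. And ruling out the properness-failure (i.e.\ showing $\{f_\beta \geq g\} \notin D_{\beta+1}$, rather than assuming it) is precisely the content of the lemma and cannot be dismissed.

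Second, and more seriously, the reduction of an arbitrary $X \in D_{\beta+1}$ to the normal form $X_0 \cap \{f_\beta > n\}$ with $X_0 \in D_\beta$ is false. $D_{\beta+1}$ is a \emph{maximal} filter satisfying (i), (ii), not the filter generated by $D_\beta \cup \{\{f_\beta > n\} : n < \omega\}$; by maximality it decides every $\fin_s(\eff_{\beta+1})$-supported set, and there are such decided sets involving $f_\beta$ that are not of your normal form --- for instance $\{f_\beta \in S\}$ for an unbounded, coinfinite $S \subseteq \omega$, or more generally $\bigcup_m(\{f_\beta = m\} \cap Y_m)$ for suitable $\fin_s(\eff_\beta)$-supported $Y_m$'s. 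The final step of your proof needs a coordinate $m$ at which $X_0 \cap A \cap \{f_\beta = m\}$ is $D_\beta$-positive, and without the normal form it is not automatic that $X \cap A \cap \{f_\beta = m\}$ is $D_\beta$-positive for even a single $m$. This is the core difficulty, and the proof as written does not get past it.
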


\br
\subsection{Conventions}

\begin{conv} \emph{(Conventions)} \label{conventions}
\begin{enumerate}
\item \label{c:filter} 
When $\de$ is a filter on $I$ and $\mathcal{X} \subseteq \mcp(I)$, by $\de \cup \mathcal{X}$ we will mean the filter generated by 
$\de \cup \mathcal{X}$. By $(\de \cup \mathcal{X})^+$ we mean the sets which are nonzero modulo the filter generated by 
$\de \cup \mathcal{X}$.
\item Throughout, tuples of variables may be written without overlines, that is: 
when we write $\vp = \vp(x;y)$, neither $x$ nor $y$ are necessarily assumed to have length 1, but are finite.
\item For transparency, all languages (=vocabularies) are assumed to be countable. 
\item When $M^I/\de$ is an ultrapower we refer to $M$ as the ``base model.''
\item By ``$\de$ saturates $T$'' we will always mean:
$\de$ is a regular ultrafilter on the infinite index set $I$, $T$ is a countable complete first-order theory and for any $M \models T$,
we have that $M^I/\de$ is $\lambda^+$-saturated, where $\lambda = |I|$.
\item \label{cv:good} We will also say that the ultrafilter $D$ is ``good'' \lp or: ``not good''\rp~ for the theory $T$ to mean that $D$ saturates 
\lp or: does not saturate\rp~ the theory $T$.
\item A partial type (\emph{or} quantifier-free complete $1$-type) in a model $M$ of the theory of partial order given by some
pair of sequences $( \langle a_\alpha : \alpha < \kappa_1 \rangle, \langle b_\beta : \beta < \kappa_2 \rangle)$ with
$\alpha < \alpha^\prime < \kappa_1, \beta < \beta^\prime < \kappa_2 \implies M \models a_\alpha < a_{\alpha^\prime} < b_{\beta^\prime} < b_\beta$,
which may or may not have a realization in $M$, is called a \emph{pre-cut}. 
Our main case is $M^I/\de$ for $M$ a linear order and $\de$ a filter on $I$.
\item We reserve the word \emph{cut} in models of linear order for pre-cuts which are omitted types.
\end{enumerate}
\end{conv}

This concludes the front matter. 

\br

\section{$\de$ $\lambda$-flexible on $I$, $f \in {^I\mathbb{N}}$ $\de$-nonstandard implies $|\prod_{s \in I} 2^{f(s)}/\de| = 2^\lambda$}
\label{s:flex-mu}

In this section we prove Claim \ref{mu-large}. This result established the arrow \cite{MiSh:996} Theorem 4.2 (4) $\rightarrow$ (1)
[note: (1), (4) are in the notation of \S \ref{s:review} Theorem \ref{formula-corr} above] as well as Conclusion \ref{flex-stable}. 
We first state a definition and a theorem. 

\begin{defn} \label{mu-defn} \emph{(\cite{Sh:c} Definition III.3.5)}
Let $\de$ be a regular ultrapower on $\lambda$.
\[  \mu(\de) :=  \operatorname{min} \left\{ \rule{0pt}{15pt}
\prod_{t<\lambda}~ n[t] /\de  ~: ~ n[t] < \omega, ~\prod_{t<\lambda}
~n[t]/\de \geq \aleph_0 \right\} \]
be the minimum value of the product of an unbounded sequence of cardinals
modulo $\de$.
\end{defn}

\begin{thm-lit} \emph{(Shelah, \cite{Sh:c}.VI.3.12)} \label{mu-theorem}
Let $\mu(\de)$ be as in Definition \ref{mu-defn}. Then for any infinite
$\lambda$ and
$\nu = \nu^{\aleph_0} \leq 2^{\lambda}$ there exists a regular ultrafilter
$\de$ on $\lambda$ with $\mu(\de) = \nu$.
\end{thm-lit}

We show here that flexibility (i.e. regularity below any nonstandard integer) makes $\mu$ large.

\begin{claim} \label{mu-large}
Let $\de$ be an ultrafilter on $I$, and $f \in {^I\mathbb{N}}$ such that for all $n \in \mathbb{N}$,
$n <_\de f$. If $\de$ is $\lambda$-regular below $f$ then $|\prod_{s \in I} 2^{f(s)}/\de| = 2^\lambda$.
\end{claim}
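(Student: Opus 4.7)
\medskip

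\noindent\textbf{Proof plan for Claim \ref{mu-large}.}

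The upper bound $|\prod_{s \in I} 2^{f(s)}/\de| \leq 2^\lambda$ is immediate: every element of the ultrapower is represented by a function $I \to \mathbb{N}$, and there are at most $\aleph_0^{|I|} = 2^\lambda$ such functions (here $|I|=\lambda$). The content is the lower bound, and the plan is to use the regularizing family below $f$ to inject $\mcp(\lambda)$ into the ultrapower.

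By hypothesis there exist $\langle X_\alpha : \alpha < \lambda \rangle$ with $X_\alpha \in \de$ and, for every $t \in I$,
\[
 u(t) := \{ \alpha < \lambda : t \in X_\alpha \}  \quad \text{satisfies} \quad |u(t)| \leq f(t).
\]
In particular each $u(t)$ is finite. For each $t$ fix an injection $e_t : u(t) \to \{0, 1, \dots, f(t)-1\}$, e.g.\ the order-preserving enumeration of $u(t)$.

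For every $S \subseteq \lambda$ define $g_S \in \prod_{t \in I} 2^{f(t)}$ by letting $g_S(t)$ be the element of $2^{f(t)}$ (viewed as the set of binary strings of length $f(t)$) whose $e_t(\alpha)$-th coordinate is $1$ iff $\alpha \in S$, for each $\alpha \in u(t)$, and whose remaining coordinates are $0$. The plan is then to verify that the map $S \mapsto g_S/\de$ is injective, which produces $2^\lambda$ distinct elements of $\prod_{t \in I} 2^{f(t)}/\de$.

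For injectivity, suppose $S \neq S'$ and pick $\alpha \in S \triangle S'$. For every $t \in X_\alpha$ we have $\alpha \in u(t)$, and by construction $g_S(t)$ and $g_{S'}(t)$ differ in coordinate $e_t(\alpha)$; thus
\[
\{ t \in I : g_S(t) \neq g_{S'}(t) \} \supseteq X_\alpha \in \de,
\]
so $g_S \neq g_{S'} \mod \de$. This completes the lower bound and hence the claim. There is no serious obstacle: the whole point is that $\lambda$-flexibility of $\de$ below $f$ gives exactly the combinatorial room needed to code subsets of $\lambda$ into sequences of natural numbers bounded (in the ultrapower) by $2^f$.
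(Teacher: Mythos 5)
Your proof is correct and is essentially the same as the paper's: the paper's $g_A(s) = \sum \{ 2^{|A_s \cap i|} : i \in A \cap A_s \}$ is exactly your $g_S(t)$ with $e_t$ the order-preserving enumeration of $u(t)$, and the injectivity argument via $X_\alpha \in \de$ is identical. You have merely unfolded the arithmetic ``binary representation'' into the equivalent ``binary string'' picture.
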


\begin{proof}
In one direction, $|\prod_{s \in I} 2^{f(s)}/\de| \leq 2^\lambda$
as $f \leq {\aleph_0}^I \mod \de = 2^{|I|}$ by regularity of $\de$, see \cite{MiSh:996}, Fact 5.1. 

In the other direction, let $\langle X_i : i < \lambda \rangle$ be a regularizing family below $f$, so each $X_i \subseteq I$. 
Now for any $A \subseteq \lambda$, define $g_A \in {^I\mathbb{N}}$ by:
\[ \mbox{if}~ s \in I ~ \mbox{let}~ A_s = \{ i < \lambda : s \in X_i \} \]
so for each $s \in I$, $|A_s| \leq f(s)$, and furthermore let
\[  g_A(s) = \sum \{ 2^{|A_s \cap i|} : i \in A \cap A_s \} \]
noting that the range can be thought of as a number in binary representation, thus if $A \cap A_s$ differs from $A^\prime \cap A_s$
we have $g_A(s) \neq g_{A^\prime}(s)$: at least one place the ``binary representations'' are different.

First notice that:
\[ g_A(s) \leq \sum \{ 2^\ell : \ell < |A_s| \} = \sum \{ 2^\ell : \ell < f(s) \} < 2^{f(s)} \]

Second, suppose $B_1\neq B_2$ are subsets of $\lambda$. Without loss of generality, $B_1 \not\subseteq B_2$, and let
$j \in B_1 \setminus B_2$. If $s \in X_j$, $g_{B_1}(s), g_{B_2}(s)$ differ as noted above, so as $X_j \in \de$ we conclude:
\[ \mbox{if} ~ B_1 \neq B_2 ~ \mbox{are subsets of $\lambda$ then}~ g_{B_1} \neq g_{B_2} \mod \de \]

These two observations complete the proof.   
\end{proof}

\begin{concl} \label{flex-stable}
If $\de$ is a flexible ultrafilter on $I$ then $\mu(\de) \geq \lambda^+$, and thus $\de$ saturates any
stable theory.
\end{concl}

\begin{proof}
It is known that $\de$ saturates all countable stable theories if and only if $\mu(\de) \geq \lambda^+$ 
(see \cite{MiSh:996}, \S 4, Theorem F). 
Now $f \in {^I\mathbb{N}}$ is $\de$-nonstandard if and only if $\prod_{s \in I} 2^{f(s)}/\de$ is nonstandard,
since we can exponentiate and take logarithms in an expanded language.
Thus, given any nonstandard $f$, apply Claim \ref{mu-large} to $\operatorname{log} f$ to conclude it is large. 
\end{proof}

\section{If $(I, \de, \gee)$ is $(\lambda, \mu)^+$-good for $\mu < \lambda$ then no subsequent ultrafilter is flexible}

As a warm-up to preventing saturation of the random graph in Section \ref{s:trg}, here
we show how to ensure directly, at some bounded stage in an ultrafilter construction, that no subsequent ultrafilter will be flexible.
Recall Fact \ref{mcc}.  The idea of building in a failure of saturation via independent functions will be substantially extended
in \S \ref{s:trg}.
Note that in \cite{MiSh:999} we show that this statement can be derived, by a different method, from a result in \cite{Sh:a} on good filters.

\begin{claim} \label{f2}  
\emph{(Preventing flexibility)}
\begin{enumerate}
\item If $(I, \de, \gee)$ is $(\lambda, \mu)$-good, $g \in \gee$, $|\rn(g)| \geq \aleph_0$ \el without loss of generality $\rn(g) \supseteq \mathbb{N}$\er
$~$  then every ultrafilter $\de_*$ on $I$ extending $\de \cup \{ \{ s \in I : n < g(s) < \omega \} : n < \omega \}$
is not $\mu^+$-flexible, witnessed by $g/\de_*$.
\item More generally, if:
\begin{itemize}
\item $\mu \leq \lambda$ is regular 
\item $(I, \de, \gee)$ is $(\lambda, {<\mu})$-good, meaning that $g \in \gee \implies \rn(g)$ is an ordinal ${< \mu}$
\item $g \in \gee$, $|\rn(g)| \geq \aleph_0$ \el without loss of generality $\rn(g) \supseteq \mathbb{N}$\er
\end{itemize}
then every ultrafilter $\de_*$ on $I$ extending $\de \cup \{ \{ s \in I : n < g(s) < \omega \} : n < \omega \}$
is not $\mu$-flexible, witnessed by $g/\de_*$.
\end{enumerate}
\end{claim}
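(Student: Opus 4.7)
My plan is to prove Claim \ref{f2} by contradiction, handling (1) and (2) in parallel. Suppose some ultrafilter $\de_* \supseteq D := \de \cup \{\, \{s \in I : n < g(s) < \omega\} : n < \omega\,\}$ is $\kappa$-flexible witnessed by $g/\de_*$, where $\kappa = \mu^+$ in case (1) and $\kappa = \mu$ in case (2). Then there is a family $\langle X_\alpha : \alpha < \kappa \rangle \subseteq \de_*$ with $|\{\alpha : t \in X_\alpha\}| \leq g(t)$ for every $t \in I$. The goal is to produce $t \in I$ and distinct $\alpha_0, \ldots, \alpha_n < \kappa$ with $g(t) = n$ yet $t \in X_{\alpha_i}$ for all $i \leq n$, contradicting this bound.

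First I would replace each $X_\alpha$ by something supported by $\gee$. Since $\{g < \omega\} \in D \subseteq \de_*$, we have $X_\alpha \cap \{g < \omega\} \in \de_*$, hence nonzero mod $\de$; density of $\fin_s(\gee)$ in $\mcp(I) \mod \de$ (Fact \ref{good-dense}) produces $h_\alpha \in \fin(\gee)$ with $A_{h_\alpha} \subseteq X_\alpha \cap \{g < \omega\} \mod \de$ and $A_{h_\alpha} \neq \emptyset \mod \de$. Independence of $\gee$ mod $\de$ then imposes a useful dichotomy. If $g \in \dom(h_\alpha)$, then $A_{h_\alpha} \subseteq \{g = h_\alpha(g)\}$, so the containment in $\{g < \omega\}$ together with $A_{h_\alpha} \neq 0 \mod \de$ forces $h_\alpha(g) \in \mathbb{N}$. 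If instead $g \notin \dom(h_\alpha)$, independence gives $A_{h_\alpha} \cap \{g = k\} \neq 0 \mod \de$ for every $k \in \rn(g)$, so this branch is available only when $\rn(g) \subseteq \mathbb{N}$.

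Next I would apply the $\Delta$-system lemma to the finite sets $\{\dom(h_\alpha) : \alpha < \kappa\}$, valid because $\kappa$ is regular and uncountable, to extract a size-$\kappa$ subfamily with common root $\Delta$ and pairwise disjoint extensions. A pigeonhole on $h_\alpha \restriction \Delta$ further refines: the number of possible restrictions is $\prod_{g' \in \Delta} |\rn(g')|$, which is at most $\mu^{|\Delta|} = \mu < \mu^+ = \kappa$ in case (1), and in case (2) is a finite product of cardinals $<\mu$, hence $<\mu = \kappa$ by regularity of $\mu$. We may therefore assume $h_\alpha \restriction \Delta$ is a fixed function $h^*$ on $\kappa$ many $\alpha$'s. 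In the first branch of the dichotomy, an additional pigeonhole on $h_\alpha(g) \in \mathbb{N}$ yields $\kappa$ many $\alpha$'s with $h_\alpha(g) = k$ fixed; put $n := k$. In the second branch, fix any natural number $n$. Now pick distinct $\alpha_0, \ldots, \alpha_n$ from the refined family. Since the $h_{\alpha_i}$ agree on their pairwise intersections (all equal $h^*$ on $\Delta$, and all equal $k$ on $g$ in the first branch), the union $h := \bigcup_i h_{\alpha_i}$ lies in $\fin(\gee)$, so $A_h = \bigcap_i A_{h_{\alpha_i}} \neq \emptyset \mod \de$ by independence. Intersecting with $\bigcap_i X_{\alpha_i}$ removes only $\de$-negligible pieces, so the resulting set is non-empty; in the first branch any element $t$ has $g(t) = k = n$, and in the second branch a final application of independence gives $A_h \cap \{g = n\} \cap \bigcap_i X_{\alpha_i} \neq \emptyset \mod \de$, hence an element $t$ with $g(t) = n$. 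Either way, $t$ lies in $n + 1$ many $X_\alpha$'s while $g(t) = n$, contradicting the flexibility bound.

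The point I expect to be most delicate is the handling of ranges strictly beyond $\mathbb{N}$: one cannot simply arrange $g \notin \dom(h_\alpha)$ while insisting $A_{h_\alpha} \subseteq \{g < \omega\}$, so the argument must tolerate $g \in \dom(h_\alpha)$ and run the auxiliary pigeonhole on the natural number $h_\alpha(g)$. A second subtle point is the pigeonhole on root values in case (2), which rests precisely on regularity of $\mu$, since a finite product of cardinals $< \mu$ must remain $< \mu$; this is exactly where the hypothesis that $\mu$ is regular, rather than an arbitrary cardinal, enters the argument.
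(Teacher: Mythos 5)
Your proof is correct, and it reaches the conclusion by a route that is close to the paper's in its core idea but differs in organization in ways worth noting. The paper encodes the hypothetical regularizing family as a pseudo-finite set $b$ in the ultrapower $(\mathcal{H}(\lambda^+), \in)^I/\de_*$ and reasons about the sets $B_{b,i}$ on which the distinct constant functions $\ci$ land inside $b$; you work directly with the regularizing sets $X_\alpha$, dispensing with the model-theoretic wrapper. The paper supports each $B_{b,i}$ by some $A_{h_i}\in\fin_s(\gee)$ without first forcing the value of $g$ into $\mathbb{N}$, and therefore needs a subsequent case split: either some $u_n = \{i : B_{b,i}\cap g_*^{-1}(n)\neq\emptyset \bmod\de\}$ has size $\mu^+$ (contradiction via Fact~\ref{mcc}), or all $u_n$ are small and some $B_{b,i}$ is $\de$-null (also a contradiction). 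You instead restrict to $X_\alpha\cap\{g<\omega\}$ before applying density, which forces the dichotomy on whether $g\in\dom(h_\alpha)$ and thereby collapses that case split. Finally, where the paper quotes Fact~\ref{mcc} (the CC bound and the extraction of a subfamily of size $\kappa$ with the finite-intersection property) as a black box, you reprove the needed instance from scratch via the $\Delta$-system lemma plus pigeonhole. Both routes work; yours is somewhat more self-contained.

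One small inaccuracy in your closing remark. Regularity of $\mu$ in case (2) is not what makes a finite product of cardinals $<\mu$ stay $<\mu$ --- that holds for any infinite cardinal, since a finite product of infinite cardinals equals their maximum. Regularity of $\mu$ is instead needed at two adjacent points: for the $\Delta$-system lemma at parameter $\kappa=\mu$ (which requires $\kappa$ regular and uncountable; note $\mu>\aleph_0$ follows from $\aleph_0\le|\rn(g)|<\mu$), and for the pigeonhole that extracts a subfamily of size $\mu$ after partitioning the index set into $<\mu$ pieces according to the value of $h_\alpha\restriction\Delta$ (and of $h_\alpha(g)$ in the first branch). This misattribution does not affect the correctness of the argument, only the diagnosis of where the hypothesis enters.
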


\begin{proof}
(1) Let $g$ and $\de$ be given.    
Let $g_* \in {^I\mathbb{N}}$ be given by: $g_*(s) = g(s)$ 
if $g(s) \in \mathbb{N}$, and $g_*(s) = 0$ otherwise. Let $\ee$ be the filter generated by
\[ \de \cup \{ \{ s \in I : g_*(s) > n\} : n \in \omega \} \] 
Note the definition of
``$(I, \de, \gee)$ is good'' ensures that $\ee$ is a non-trivial filter, recalling \ref{conventions}(2). 

For each $i<\mu^+$, let $\ci = {^I\{i\}}$ be constantly $i$. 
Let $\de_* \supseteq \ee$ be any ultrafilter on $I$ extending $\ee$. We now ask:
\[ \mbox{Does $(\mathcal{H}(\lambda^+), \epsilon)^I/\de_* \models$ ``there is a set $b/\de_*$ with $g/\de_*$ 
members s. t. for every $i < \mu^+$, $\ci/\de_* \in b$''?} \]

Assume towards a contradiction that it does, witnessed by $b/\de_*$ for some given $b \in {^I(\mathcal{H}(\lambda^+))}$.
Since $\de_*$ is regular, we may assume without loss of generality that the projections to the base model are
finite, i.e. that for each $s \in I$, $b[s] \in [\lambda]^{\leq g_*(s)}$. 

For each $i<\mu^+$, let $B_{b,i} = \{ s \in I : i \in b[s] \}$ be the set on which the constant function $\ci$ is in the
projection of $b$. For each $i$, $B_{b,i} \neq \emptyset \mod \de$
as it belongs to an ultrafilter extending $\de$. Thus, since $(I, \de, \gee)$ is a good triple, 
there is $A_{h_i}$ for $h_i \in \fin(\gee)$ such that $A_{h_i} \subseteq B_{b,i}$. 

Clearly, any two constant functions $\ci, \textbf{f}_j$ are everywhere distinct. For each $n<\omega$, let 
\[ u_n = \{ i<\mu^+ : B_{b,i} \in (\de \cup g^{-1}_*(n))^+ \} \]  
Note that any such $B_{b,i}$ will contain $A_{h_{n,i}} \mod \de$ for some $h_{n,i} \in \fin(\gee)$ which includes the
condition that $g=n$, i.e. the condition that $b$ have exactly $n$ elements.

Suppose first that for some $n$, $|u_n| \geq \mu^+$.
By Fact \ref{mcc}, we know that since $CC(B(\de)) = \mu^+$ by the assumption on $\gee$, 
there are $i_1 < \dots < i_{n+1}$ in $u_n$ 
such that $h := \bigcup \{ h_{n,i_\ell} : 1 \leq \ell \leq n+1 \} \in \fin(\gee)$. By choice of the $h_{n,i}$
we have that $A_h \cap g^{-1}(n) \neq \emptyset \mod \de$, i.e. there is a nonempty set on which $n+1$ everywhere distinct
elements each belong to a set of cardinality $n$, a contradiction. 

Thus for each $n<\omega$, we must have $|u_n| \leq \mu$. Hence 
$u := \bigcup \{ u_n : n<\omega \}$ has cardinality $\leq \mu$. 
On the other hand, if $i \in \mu^+ \setminus u$ then by definition for each $n<\omega$, $B_{b,i} \cap g^{-1}_*(\{n\}) = \emptyset$
$\mod \de$. Hence for $g$, $B_{b,i} \cap \{ t : g(t) \geq \omega \} = \emptyset \mod \de$. 
Since $g \in \gee$ and we had assumed $\de$ maximal modulo which $\gee$ was independent, this implies that already
$B_{b,i} = \emptyset \mod \de$.  Hence for any $i \in \mu^+ \setminus u$, and for any ultrafilter $\de_* \supseteq \ee$, 
\[ (\mathcal{H}(\lambda^+), \epsilon)^I/\de_* \models \mbox{``$\ci/{\de_*} \notin b/{\de_*}$''} \] 
We have shown that in any such ultrafilter, $g_*/\de_* > \aleph_0$ but there is no $\mu^+$-regularizing set of size $\leq g_*/\de_*$.
This completes the proof.

(2) Same proof, since in this case we can still apply Fact \ref{mcc}.
\end{proof}

\begin{rmk} \label{r:compare}
Compare Claim \ref{f2} to the main theorem of Section \ref{s:meas}.
Claim \ref{f2} shows that if in some point in the construction of an ultrafilter via families of independent functions, we reach a point where the $CC$ of the remaining Boolean algebra is small, then \emph{after adding one more function}, no subsequent ultrafilter can be flexible. 
However, this is not a fact about $CC(B(D))$ alone. 
Theorem \ref{flex-not-good-b} constructs a flexible, not good ultrafilter by means of a quotient: 
the key step there is to begin with a filter $\de$ on $\lambda$ such that there is a Boolean algebra homomorphism 
$h: \mcp(I) \rightarrow \mcp(\kappa)$ with $h^{-1}(1) = \de$, 
where $\kappa < \lambda$. We then take the preimage of a complete ultrafilter on $\kappa$ to complete
the construction. This second ultrafilter is flexible, thanks to the completeness.
\end{rmk}

\br

\section{Omitting types in ultrapowers of the random graph} \label{s:trg}

In this section we show how to prevent saturation directly in ultrapowers of the random graph. 
We write $\trg$ for the theory of the random graph and consider models of $\trg$, unless otherwise stated.

\br
\step{Step 0. Preliminary Discussion to Theorem \ref{claim-new}.}
In this step we assume $\cf(\kappa) > \mu$.

Suppose that $\langle (I, \de_{\alpha}, \gee_{\alpha}) : \alpha < \kappa \rangle$ is some continuous sequence
of $(\lambda, \mu)$-good triples, where the filters are increasing with $\alpha$ and the families of functions are decreasing with $\alpha$.
Suppose $\de_\kappa = \bigcup_{\alpha < \kappa} \de_\alpha$ is a \emph{filter} (not an ultrafilter) built by such an induction and 
$\gee_\kappa = \bigcap_{\alpha < \kappa} \gee_\alpha \neq \emptyset$. Then by Fact \ref{uf}, as $\cf(\kappa) > \mu$, 
the limit triple $(I, \de_\kappa, \gee_{\kappa})$ is also $(\lambda, \leq\mu)$-good. Write $\gee = \gee_\kappa$ for
this set of functions which remains free. 

Our strategy will be to build a barrier to saturation into any subsequent construction of an ultrafilter $\de \supseteq \de_\kappa$ 
by first constructing the filter $\de_\kappa$, in $\kappa$ steps, 
to have a ``blind spot.'' We now explain what this means. Let $M$ be any model of the random graph.
For any function $g$ from $I$ to $M$ and any element $a \in M$ we may define
\[ A_{g,a} = \{ t : M \models  g(t) R a  \}  \]
Then by definition of ``good triple,'' for each such $g$ and $a$, $A_{g,a}$ belongs, in $\mcp(I)/\de_{\kappa}$.
to the minimal completion of the subalgebra generated by 
\[ \langle f^{-1}(\{\alpha\}) : f \in \gee,  \alpha \in \rn(f) \rangle \]
Since $\cf(\kappa) > \mu$, for each $A_{g,a}$ this will already be true in $\mcp(I)/\de_\alpha$ for some $\alpha < \kappa$. 
Thus at each stage $\beta$ of our induction we will define
\begin{align*}
X_\beta = & \{ g : ~\mbox{$g$ is a function from $I$ to $M$ and for every $a \in M$}\\
                 & \mbox{the set $A_{g,a} = \{ t : M \models  g(t) R a  \}$ belongs, in $\mcp(I)/\de_{\beta}$, }\\
	     & \mbox{ to the minimal completion of the subalgebra generated by } \\
                 &  \langle f^{-1}(\{\alpha\}) : f \in \gee, \alpha \in \rn(f) \rangle \} \\
\end{align*}
The key point of the construction is then to ensure (\ref{claim-new} Step 3 item 7) 
that a distinguished sequence of elements of the ultrapower look alike to all $g \in X_\beta$,
and moreover (in \ref{claim-new} Step 3 item 1) that the resulting triple at $\beta$ is good. Since by the end of the induction
\emph{all} $g$ belong to some $X_\beta$, we will have constructed a sequence of elements of the reduced product
which is effectively indiscernible under any completion to an ultrapower. 
We can then directly find an omitted type in \ref{claim-new} Steps 4-5.

\begin{theorem} \label{claim-new}
If $(A)$ then $(B)$ where:
\begin{enumerate}
\item[$(A)$] We are given $I, \de_0, \gee, g_*, \lambda, \mu$ such that:
\begin{enumerate}
\item $\de_0$ is a regular filter on $I$
\item $(I, \de_0, \gee \cup \{ g_* \})$ is $(\lambda, \mu)$-good
\item $\mu^+ < \lambda$
\item $\rn(g_*) = \mu$
\end{enumerate}
\item[$(B)$] Then there is a filter $\de \supseteq \de_0$ such that:
\begin{enumerate}
\item $(I, \de, \gee)$ is $(\lambda, \mu)$-good
\item if $\de_*$ is any ultrafilter on $I$ extending $\de$ and $M \models \trg$, then
$M^I/\de_*$ is not $\mu^{++}$-saturated. In particular, it is not $\lambda^+$-saturated.
\end{enumerate}
\end{enumerate}
Informally speaking, at the cost of a single function we can prevent future saturation of the theory of the random graph, 
thus of any unstable theory.
\end{theorem}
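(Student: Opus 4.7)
My plan is the inductive construction sketched in the preliminary discussion: unpack $g_*$ into $\mu^+$ independent Boolean bits, consume them in pairs during a length-$\mu^+$ induction, and at each stage use the consumption to force $R$-indiscernibility of the pair over all functions that have already been ``captured'' by $\gee$. For the unpacking, since $\mu \geq \aleph_0$ I have $\mu^+ \leq 2^\mu$, so there is an independent family $\langle A_\epsilon : \epsilon < \mu^+ \rangle$ of subsets of $\mu$; I define $f^*_\epsilon : I \to \{0,1\}$ by $f^*_\epsilon(t) = 1$ iff $g_*(t) \in A_\epsilon$. Independence of the $A_\epsilon$'s gives that $(I, \de_0, \gee \cup \{f^*_\epsilon : \epsilon < \mu^+\})$ is $(\lambda, \mu)$-pre-good, and I extend $\de_0$ to maximality. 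Fix also $M \models \trg$ countable, which suffices for (B)(2) by Theorem \ref{backandforth}, and view each $f^*_\epsilon$ as a function into $M$ via pairwise disjoint vertex-pairs $\{u^\epsilon_0, u^\epsilon_1\} \subseteq M$; the placement of these pairs will be fine-tuned during the induction.

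The induction produces $\langle \de_\beta : \beta \leq \mu^+ \rangle$ maintaining the invariants: (i) $(I, \de_\beta, \gee \cup \{f^*_\epsilon : \epsilon \geq 2\beta\})$ is $(\lambda, \mu)$-good, and (ii) for every $\gamma < \beta$ and $g \in X_\gamma$,
\[ Y_{\gamma, g} \; := \; \{t \in I : M \models g(t) R f^*_{2\gamma}(t) \leftrightarrow g(t) R f^*_{2\gamma+1}(t)\} \in \de_\beta. \]
At limits I take unions; since $\cf(\mu^+) > \mu$, Fact \ref{uf} preserves goodness. At the successor $\beta \to \beta+1$, I add all sets $Y_{\beta, g}$ ($g \in X_\beta$) to $\de_\beta$ and extend to maximality via Fact \ref{uf}(3). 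Setting $\de = \de_{\mu^+}$ then establishes (B)(1).

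For (B)(2), let $\de_* \supseteq \de$ be any ultrafilter and consider the partial type
\[ p(x) \; = \; \{x R (f^*_{2\beta}/\de_*) : \beta < \mu^+\} \cup \{\neg (x R (f^*_{2\beta+1}/\de_*)) : \beta < \mu^+\}. \]
Finite consistency holds by the random graph axioms, since the parameters are pairwise distinct by the vertex-pair choice above. For any candidate realization $h : I \to M$, countability of $M$ together with Fact \ref{good-dense} gives that each of the countably many sets $A_{h, a}$, $a \in M$, is captured by some stage $\beta_a < \mu^+$; since $\cf(\mu^+) > \aleph_0$, there is a common $\beta < \mu^+$ with $h \in X_\beta$. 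Invariant (ii) at $\beta+1$ then forces $Y_{\beta, h} \in \de \subseteq \de_*$, preventing $h/\de_*$ from realizing both $x R (f^*_{2\beta}/\de_*)$ and $\neg (x R (f^*_{2\beta+1}/\de_*))$. Hence $p$ is omitted and $M^I/\de_*$ is not $\mu^{++}$-saturated.

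The main obstacle is the successor stage of the induction: verifying that adding $\{Y_{\beta, g} : g \in X_\beta\}$ to $\de_\beta$ preserves pre-goodness with the reduced family $\gee \cup \{f^*_\epsilon : \epsilon \geq 2\beta+2\}$. The argument should be atom-by-atom: on each atom $A_h$ of $\fin_s$ of the reduced family, $(f^*_{2\beta}, f^*_{2\beta+1})$ remains free, taking all four values in $\{u^{2\beta}_0, u^{2\beta}_1\} \times \{u^{2\beta+1}_0, u^{2\beta+1}_1\}$ on $\de_\beta$-positive subsets of $A_h$. For each finite $F \subseteq X_\beta$, the $R$-behavior of each $g \in F$ on these four vertices is constant on $A_h$ modulo $\de_\beta$ (by the definition of $X_\beta$), so $\bigl(\bigcap_{g \in F} Y_{\beta, g}\bigr) \cap A_h$ reduces to a finite Boolean condition on $(f^*_{2\beta}(t), f^*_{2\beta+1}(t))$; choosing the vertex pairs at stage $\beta$ via finitely many random graph axioms to realize the required edge patterns ensures at least one of the four combinations satisfies the condition, making the intersection nonempty modulo $\de_\beta$.
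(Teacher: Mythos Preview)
Your overall architecture matches the paper's: unpack $g_*$, consume in pairs, force $R$-indiscernibility over the captured functions $X_\beta$, and omit the alternating type. The gap is in the specific unpacking you chose.

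With $f^*_\epsilon$ taking only two values and the pairs $\{u^{2\beta}_0,u^{2\beta}_1\}$, $\{u^{2\beta+1}_0,u^{2\beta+1}_1\}$ disjoint (hence four distinct vertices), the successor step fails outright. Take $g$ to be the constant function with value $a\in M$, where $a$ is any vertex with $\neg(aRu^{2\beta}_0)$, $\neg(aRu^{2\beta}_1)$, $aRu^{2\beta+1}_0$, $aRu^{2\beta+1}_1$; such $a$ exists by the random graph axioms. This $g$ is certainly in $X_\beta$ (constants are supported by $\gee$ trivially), but for every $t\in I$ we have $g(t)Rf^*_{2\beta}(t)$ false and $g(t)Rf^*_{2\beta+1}(t)$ true, so $Y_{\beta,g}=\emptyset$. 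No choice among the four bit-combinations helps, and no ``fine-tuning'' of the vertex pairs at stage $\beta$ can avoid this: for any four distinct vertices the bad pattern is realized somewhere in $M$. Thus you cannot add $\{Y_{\beta,g}:g\in X_\beta\}$ to $\de_\beta$ at all, let alone preserve pre-goodness.

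The paper's remedy is exactly to give the $f^*_\epsilon$'s full range $\mu$: one fixes $|M|=\mu$, an independent family $\langle h_\epsilon:\epsilon<\mu^+\rangle$ of functions from $\mu$ onto $\mu$, and sets $f^*_\epsilon=h_\epsilon\circ g_*$. Then in the successor-step verification (the paper's Claim~\ref{c:ind1}), given finitely many $g_\ell$'s and an atom $A_h$, one first uses \Lost theorem to find $w_1\neq w_2\in M$ with $g_\ell R w_1\equiv g_\ell R w_2$ for each $\ell$, refines $A_h$ to $A_{h'}\in\fin_s(\gee)$ on which this holds, and then uses the independence of $f^*_{2\beta},f^*_{2\beta+1}$ from $\gee$ to force $f^*_{2\beta}=w_1$, $f^*_{2\beta+1}=w_2$ on a positive subset. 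The point is that $w_1,w_2$ are chosen \emph{per atom and per finite $F$}, which requires the $f^*$'s to have full range, not range two. (A minor secondary issue: a countable $M$ cannot accommodate $\mu^+$ pairwise disjoint vertex pairs when $\mu\geq\aleph_0$; the paper takes $|M|=\mu$.)
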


\begin{proof} The proof will have several steps, within the general framework described at the beginning of this section. 
``Thus of any unstable theory'' is immediate from the fact that $\trg$ is minimum among the unstable theories in Keisler's order. 

\br
\step{1. Background objects.} We fix $M \models \trg$ of size $\mu$ given with some enumeration, 
and $\langle h_\epsilon : \epsilon < \mu^+ \rangle$ an independent family of functions from $\mu$ onto $\mu$. 
In a slight abuse of notation, identify $\mu = \rn(g_*)$ with the domain of $M$ under the given enumeration.

\br
\step{2. The blow-up of $g_*$.} For $\epsilon < \mu^+$, define
$f^*_\epsilon = h_\epsilon \circ g_*$. Then each $f^*_\epsilon$ is a function from $I$ to $\mu$.
Recall from $(B)$(b) that $\gee = \gee^\prime \setminus \{ g_* \}$.
Then letting $\gee_0 = \gee \cup \{ f^*_\epsilon : \epsilon < \mu^+ \}$, we have that
$\gee_0 \subseteq {^I \mu}$ is an independent family modulo $\de_0$. [This is simply a coding trick which allows
us to use a single function $g_*$ in the statement of the Theorem.]

\br
\step{3. Construction of $\de$.} 
By induction on $\alpha < \mu^+$ we choose a continuous sequence of triples
$(I, \de_{\alpha}, \gee_{\alpha})$ so that:
\begin{enumerate}
\item $(I, \de_{\alpha}, \gee_{\alpha})$ is $(\lambda, \leq \mu)$-good
\item $\beta < \alpha \implies \de_{\beta} \subseteq \de_{\alpha}$
\item $\alpha$ limit implies $\de_{\alpha}$ = $\bigcup \{ \de_{\beta} : \beta < \alpha \}$
\item $\gee_{\alpha} = \gee \cup \{ f^*_\epsilon : \epsilon \in [2\alpha, \mu^+) \}$
\item $\alpha$ limit implies $\gee_{\alpha}$ = $\bigcap \{ \gee_{\beta} : \beta < \alpha \}$ (follows)
\item if $\alpha = 0$ then $\de_{\alpha}$ is $\de_0$ 
\item if $\alpha = \beta + 1$ and $g \in X_\beta$ (see Step 0), then $B_{g,\beta} \in \de_\alpha$ where:
\[ B_{g,\beta} = \{ g(t) R^{M} f^*_{2\beta}(t) \equiv g(t) R^{M} f^*_{2\beta+1}(t) ~:
~\mbox{$t \in I$ and $f^*_{2\beta}(t), f^*_{2\beta+1}(t) \in M_0$ } \} \]
\item if $\alpha = \beta + 1$, then the set
\[  N_\beta = \{ t ~:~ f^*_{2\beta}(t) \neq f^*_{2\beta+1}(t) \} \]
belongs to $\de_{\alpha}$, and also for each constant function $c: I \rightarrow \mu$ 
the set
\[ S_\beta = \{ t ~: f^*_{2\beta}(t) > c(t) \} \cap \{ t ~: f^*_{2\beta + 1}(t) > c(t) \} \]
belongs to $\de_{\alpha}$.
\end{enumerate}

\br\noindent\emph{The induction}.
For $\alpha = 0$, let $\de_0, \gee_0$ be as defined above. 

For $\alpha$ limit, use (3) and (5), completing to a good triple, if necessary.

For $\alpha = \beta + 1$, consider the filter $\de^\prime_\beta$ generated by 
\[ \de_\beta \cup \{ B_{g,\beta} : g \in X_\beta \} \cup N_\beta \cup S_\beta \] 
Claim \ref{c:ind1} ensures that $(I, \de^\prime_\beta, \gee_\beta)$ is $(\lambda, \leq \mu)$-pre-good.
Choose $\de_\beta$ to be any filter extending $\de^\prime_\beta$ so that the triple
 $(I, \de_\beta, \gee_\beta)$ is $(\lambda, \leq \mu)$-good. This completes the inductive step. 

Finally, let $\de = \de_{\mu^+}$, and by construction $\gee_{\mu^+} = \gee$.
As explained at the beginning of the section, 
it follows from the cofinality of the construction that $(I, \de, \gee)$ is a good triple.

\br

\step{4. Distinct parameters.}
Here we justify the fact that the elements $\{ f^*_\gamma/\de_* : \gamma < \mu^+ \}$ are distinct in any ultrapower 
$M^\lambda/\de_*$ where $\de_* \supseteq \de$. It suffices to show this for any pair 
$f^*_\gamma, f^*_\zeta$.

If $\gamma = 2\beta, \zeta = 2\beta + 1$ then this is built in by Step 3, item 8. 

Otherwise, $\gamma, \zeta$ were dealt with at different stages and so will be distinct by Fact \ref{u-fact} 
and Step 3, item 9.

\br
\step{5. An omitted type.} 
In this step we prove that if $\de_* \supseteq \de$ is an ultrafilter on $I$ then ${M}^I/\de_*$ omits the type
\[ q(x) = \{ xR (f^*_\gamma/\de_*)^{\mbox{if ~($\gamma$ is even)}} ~: ~\gamma < \mu^+ \}  \]
By Step 4, the set of parameters is distinct, so $q$ is a consistent partial type.
Suppose for a contradiction that $\hat{g}  \in {^I M}$ realizes $q$. 
As observed at the beginning of the proof, 
since the cofinality of the construction is large ($\mu^+ > \mu$) we have for free that 
$(I, \de, \gee)$ is a good triple. Moreover, as $|M| = \mu < \mu^+$,  
for each element of the reduced product (i.e. each function $g: I \rightarrow M$) there is some $\beta = \beta_g < \mu^+$
such that for each $a \in M$, $A_{g,a}$ belongs already in $\mcp(I)/\de_\beta$ 
to the minimal completion of the subalgebra generated by 
$\langle f^{-1}(\{\alpha\}) : f \in \gee, \alpha < \mu \} \rangle$. 
Let $\beta = \beta_{\hat{g}}$.  By Step 3 item 7, 
\[ \{ t \in I : \hat{g} R f^*_{2\beta} ~\iff~ \hat{g} R f^*_{2\beta+1}  \} \in \de_{\beta+1} \subseteq \de \]
which gives the contradiction.
\end{proof}

We now give Fact \ref{u-fact} and Claim \ref{c:ind1} which were used in the construction. 
Fact \ref{u-fact} will ensure elements of the distinguished sequence built in \ref{claim-new} are distinct. 
In the language of order rather than equality, it is \cite{Sh:c} VI.3.19(1) p. 362.

\begin{fact} \label{u-fact} 
Let $\gee$ be independent $\mod \de$, and $\langle g^{-1}(t)/\de : t \in \mu \rangle$
a partition of $B(\de)$ for every $g \in \gee$ $($which holds if $(I, \de, \gee)$ is a good triple$)$. 
Suppose that $g^\prime: I \rightarrow \mu$, and $\mu^I/\de \models \epsilon \neq g^\prime/\de$ for every $\epsilon < \mu$,
and $g \in \gee$. 
Then for every ultrafilter $\de_* \supseteq \de$, $\mu^I/\de_* \models g/\de_* \neq g^\prime/\de_*$.
\end{fact}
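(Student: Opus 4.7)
The plan is to reduce the statement to a short calculation in the quotient Boolean algebra $B(\de)$. The conclusion $\mu^I/\de_* \models g/\de_* \neq g'/\de_*$ for \emph{every} ultrafilter $\de_* \supseteq \de$ is equivalent to $A := \{ t \in I : g(t) = g'(t) \} = \emptyset \mod \de$, since otherwise $A$ would be $\de$-positive and could be placed into some extending ultrafilter. Under the natural reduced-product convention, and consistent with the application in Step 4 of Theorem \ref{claim-new} where item 9 of Step 3 supplies exactly this strong condition, I read the hypothesis $\mu^I/\de \models \epsilon \neq g'/\de$ as $\{ t : g'(t) \neq \epsilon \} \in \de$, i.e. $g'^{-1}(\{\epsilon\}) = \emptyset \mod \de$, for each $\epsilon < \mu$.

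With that reformulation the proof is a one-step contradiction using the partition property. Suppose $[A] \neq 0$ in $B(\de)$. By hypothesis $\langle g^{-1}(\{\epsilon\})/\de : \epsilon < \mu \rangle$ is a partition of $B(\de)$ in the sense of Definition \ref{d:ba-various}(1), that is, a \emph{maximal} set of pairwise disjoint nonzero elements. Any nonzero element of $B(\de)$ must meet some member of the partition, for if $[A]$ were disjoint from every $[g^{-1}(\{\epsilon\})]$ one could adjoin $[A]$ to obtain a strictly larger pairwise disjoint family, violating maximality. Pick $\epsilon_* < \mu$ with $A \cap g^{-1}(\{\epsilon_*\}) \neq \emptyset \mod \de$; but $A \cap g^{-1}(\{\epsilon_*\}) \subseteq \{ t : g'(t) = \epsilon_* \} = g'^{-1}(\{\epsilon_*\})$, which by hypothesis is $\emptyset \mod \de$, a contradiction. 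Hence $A = \emptyset \mod \de$, so $\{ t : g(t) \neq g'(t)\} \in \de \subseteq \de_*$ and the conclusion follows.

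There is essentially no real obstacle here. The only point requiring any care is that $B(\de)$ need not be complete, so ``partition'' cannot be interpreted as ``supremum equals $1$''; one must instead invoke the maximality clause of Definition \ref{d:ba-various}(1), which is precisely what powers the elementary disjointness step above. A secondary minor point is to confirm the reading of the hypothesis — which is forced by checking that the intended application in Step 4 of Theorem \ref{claim-new} delivers the strong form $g'^{-1}(\{\epsilon\}) = \emptyset \mod \de$ rather than merely $g'^{-1}(\{\epsilon\}) \notin \de$.
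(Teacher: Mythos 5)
Your proof is correct and takes essentially the same route as the paper's: both assume for contradiction that $A := \{t \in I : g(t) = g'(t)\}$ is $\de$-positive, find $\epsilon_* < \mu$ with $A \cap g^{-1}(\epsilon_*) \neq \emptyset \mod \de$, and observe this set is contained in $g'^{-1}(\{\epsilon_*\})$, contradicting the hypothesis that each $g'^{-1}(\{\epsilon\})$ is $\de$-null. Your explicit appeal to the maximality clause of Definition~\ref{d:ba-various}(1) is in fact slightly more careful than the paper's stated justification (that $\{g^{-1}(\epsilon) : \epsilon < \mu\}$ partitions $I$, which alone would not yield the step for infinite $\mu$), and your reading of the hypothesis $\mu^I/\de \models \epsilon \neq g'/\de$ is the one forced by the application in Step~4 of Theorem~\ref{claim-new}.
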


\begin{proof}
Suppose to the contrary that
\[ X = \{ t \in I ~:~ g(t) = g^\prime(t) \} \neq \emptyset \mod \de \]
As $\{ g^{-1}(\epsilon) : \epsilon < \mu \}$ is a partition of $I$, there would have to be $\epsilon_* < \mu$
such that
$X \cap g^{-1}(\epsilon_*) \neq \emptyset \mod \de$. Thus
$\{ t \in I : g^\prime(t) = \epsilon_* \} \neq \emptyset \mod \de$, contradiction.
\end{proof}

Finally, Claim \ref{c:ind1} willl suffice to show that the
filter built in Step 3 does not contain $\emptyset$.
It says, roughly speaking, that if we are given a finite sequence $g_0, \dots g_{n-1}$ of elements of 
${^IM}$ such that the interaction of each $g_\ell$ with $M$ is supported by $\fin_s(\gee)$ in the sense described, and if
$\{f_1, f_2 \} \cup \gee$ form an independent family, then we may extend $D$ to ensure that $f_1, f_2$
are distinct nonstandard elements which nevertheless look alike to $g_0, \dots g_{n-1}$.  

\begin{claim} \label{c:ind1}
Suppose $(I, D, \gee \cup \{ f_1, f_2 \})$ is a $(\lambda, \mu)$-pre-good triple, $\rn(f_1) = \mu = \rn(f_2)$. 
Let $M$ be a base model of size $\mu$,
$|M| = \{ a_i : i < \mu \}$. 
Fix $n<\omega$ and let $g_0, \dots g_{n-1}: I \rightarrow M$ be functions such that for every $a \in M$ and $\ell<n$ 
the set $\{ t \in I ~:~ g_\ell(t) R^M a \}$ is supported by
$\fin_s(\gee)$ mod $D$. Then for any $A_h \in \fin_s(\gee)$ and any $\sigma \in [\mu]^{<\aleph_0}$ the set
\begin{align*} 
X :=  A_h ~\cap~ & \{ t \in I ~:~ \bigwedge_{\ell < n} g_\ell(t) R^M f_1(t) \equiv  g_\ell(t) R^M f_2(t) \}~~ \cap  \\
 & \{ t \in I ~:~ f_1(t) \neq f_2(t) \land ~( \{ f_1(t), f_2(t) \} \cap \{ a_i : i \in \sigma \} = \emptyset ) \} \\
& \neq \emptyset \mod D \\
\end{align*}
\end{claim}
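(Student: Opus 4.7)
The plan is to reduce the claim to a pigeonhole argument in the base model. The key observation is that $f_1, f_2$ can be eliminated from the target: it suffices to find $j_1 \neq j_2 \in \mu \setminus \sigma$ such that
\[
Y_{j_1, j_2} \;:=\; A_h \;\cap\; \bigcap_{\ell < n}\bigl\{\, t \in I \,:\, g_\ell(t)\, R^M\, a_{j_1} \equiv g_\ell(t)\, R^M\, a_{j_2} \,\bigr\} \;\neq\; \emptyset \mod D.
\]
Since each set $\{t : g_\ell(t) R^M a\}$ ($a \in M$) is supported by $\fin_s(\gee)$ mod $D$ by hypothesis, $Y_{j_1, j_2}$, a finite Boolean combination of such sets and $A_h$, is itself supported by $\fin_s(\gee)$ mod $D$. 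Being nonzero mod $D$, it contains some $A_{h'}$ with $h' \in \fin(\gee)$ and $A_{h'} \neq \emptyset$ mod $D$. Define $h'' \in \fin(\gee \cup \{f_1, f_2\})$ by extending $h'$ with $h''(f_1) = j_1$, $h''(f_2) = j_2$; pre-goodness of $(I, D, \gee \cup \{f_1, f_2\})$ yields $A_{h''} \neq \emptyset$ mod $D$. On $A_{h''}$ one has $f_1(t) = j_1 \neq j_2 = f_2(t)$, and $j_1, j_2 \notin \sigma$ forces $\{f_1(t), f_2(t)\} \cap \{a_i : i \in \sigma\} = \emptyset$; finally, since $f_k(t) = a_{j_k}$ under the enumeration of $M$, the adjacency equivalence defining $Y_{j_1, j_2}$ translates to $g_\ell(t) R^M f_1(t) \equiv g_\ell(t) R^M f_2(t)$. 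Hence $A_{h''} \subseteq X$ mod $D$, and $X \neq \emptyset$ mod $D$, as desired.

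It remains to produce such a pair $j_1, j_2$. Suppose for contradiction that $Y_{j_1, j_2} = \emptyset$ mod $D$ for every pair of distinct $j_1, j_2 \in \mu \setminus \sigma$. Then for each such pair there exists $C_{j_1, j_2} \in D$ with
\[
A_h \cap C_{j_1, j_2} \;\subseteq\; \bigcup_{\ell < n} \bigl\{\, t \,:\, g_\ell(t)\, R^M\, a_{j_1} \not\equiv g_\ell(t)\, R^M\, a_{j_2} \,\bigr\}.
\]
Fix distinct $j_0, \ldots, j_{2^n} \in \mu \setminus \sigma$ — possible since $\mu$ is infinite and $\sigma$ finite — and set $F := \bigcap_{i_1 < i_2 \leq 2^n} C_{j_{i_1}, j_{i_2}}$, a finite intersection lying in $D$. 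Since $A_h \neq \emptyset$ mod $D$, $A_h \cap F \neq \emptyset$ on the nose; pick $t \in A_h \cap F$. Then for every pair $i_1 < i_2$ there is some $\ell < n$ with $g_\ell(t) R^M a_{j_{i_1}} \neq g_\ell(t) R^M a_{j_{i_2}}$, which says the map $i \mapsto \bigl(g_\ell(t) R^M a_{j_i}\bigr)_{\ell < n}$ from $\{0, \ldots, 2^n\}$ into $\{0,1\}^n$ is injective — impossible since its domain has $2^n + 1$ elements.

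The main obstacle is conceptual rather than technical: one must see that the $f_1, f_2$ layer is completely decoupled from the combinatorial core, and that pigeonhole still works at the filter (not ultrafilter) level because only finitely many pairs $(j_{i_1}, j_{i_2})$ are needed to force the contradiction, so the intersection of the witnesses $C_{j_{i_1}, j_{i_2}}$ remains in $D$ automatically. It is worth noting that the argument never invokes the random graph axioms of $M$: only $|M| = \mu$ and supportedness of the adjacency sets $\{t : g_\ell(t) R^M a\}$ are used. The random graph structure is deployed only in the outer iteration within Theorem \ref{claim-new}, where cumulatively indistinguishable pairs are arranged to omit a random-graph type in every subsequent ultrapower.
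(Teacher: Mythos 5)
Your proof is correct. The overall skeleton — find a pair of base-model constants $a_{j_1}, a_{j_2}$ so that the set of $t \in A_h$ at which they are $R$-indiscernible from $g_0(t), \dots, g_{n-1}(t)$ is $D$-positive, locate an $A_{h'} \in \fin_s(\gee)$ inside it mod $D$, then invoke pre-goodness of $(I, D, \gee \cup \{f_1, f_2\})$ to intersect with $f_1^{-1}(j_1) \cap f_2^{-1}(j_2)$ — is the same as the paper's. Where you genuinely diverge is in producing the constants. The paper appeals to the random-graph axioms via \Los's theorem in a prospective ultrapower to argue that suitable $a, a'$ and truth values $\trv_\ell$ can be chosen ``consistently with the filter generated by $D \cup \{A_h\}$''; the delicate step of descending from witnesses $w_1, w_2 \in M^I/D_*$ to actual constants of $M$ is left implicit there. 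You replace this with a finitary pigeonhole at the filter level: if $Y_{j_1,j_2} = \emptyset \mod D$ for every pair of distinct indices in $\mu \setminus \sigma$, then intersecting the $D$-witnesses $C_{j_{i_1},j_{i_2}}$ over $2^n+1$ chosen indices stays in $D$, and any $t \in A_h \cap F$ makes the map $i \mapsto (g_\ell(t)\,R^M a_{j_i})_{\ell<n}$ injective into $\{0,1\}^n$, which is impossible. Your route is more elementary, and you correctly isolate a bonus the paper's phrasing obscures: the argument uses neither \Los nor any axiom of $\trg$, only $|M| = \mu \geq \aleph_0$ and supportedness of the sets $\{t : g_\ell(t)\,R^M a\}$, so Claim \ref{c:ind1} in fact holds for an arbitrary binary relation $R$, with the random-graph structure needed only in the surrounding induction of Theorem \ref{claim-new}.
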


\begin{proof}
In any ultrapower of the random graph, the assertion
\begin{align*} 
( & \forall y_0 \dots y_{n-1} ) (\forall z_0, \dots z_j) (\exists w_1 \neq w_2)(\exists \trv_0, \dots \trv_{n-1} \in \{ 0,1\} ) \\
& \left(   ( \{ z_0, \dots z_j \} \cap \{ w_1, w_2 \} = \emptyset) \land \bigwedge_{\ell < n} (y_i R w_1) \equiv (y_i R w_2)  \equiv (\trv_\ell=1) \right) \\
\end{align*}
will be true by \lost theorem. Thus, since $A_h \neq \emptyset \mod \de$, 
it must be consistent with the nontrivial filter generated by $\de \cup \{ A_h \}$ to choose $a, a^\prime \in M$, $\trv_\ell \in \{ 0, 1\}$ 
such that $a, a^\prime$  play the role of $w_1, w_2$ when we replace the $y$s by $g$s and the $z$s by the elements $\{ a_i : i \in \sigma \}$ of $M$. 

Now we assumed in the statement of the claim that each of the sets
\[ (g_\ell R a)^{\trv_\ell}, (g_\ell R a^\prime)^{\trv_\ell}, \ell < n \]
are supported by $\fin_s(\gee) \mod \de$. Moreover, as just shown, their intersection is nonempty $\mod \langle \de \cup A_h \rangle$.
So we may find some nontrivial $A_{h^\prime} \in \fin_s(\gee)$, with $h \subseteq h^\prime$,
contained in that intersection $\mod \de$. By the assumption that $\{ f_1 ,f_2 \} \cup \gee$ is independent $\mod \de$, we have that
\[ A_{h^\prime} \cap A_h \cap  f^{-1}_1(a) \cap f^{-1}_2(a^\prime) \neq \emptyset \mod \de \]
As this set is clearly contained in the set $X$ from the statement of the Claim, we finish the proof. 
\end{proof}

\begin{rmk}
Much about this argument is not specific to the theory of the random graph. 
\end{rmk}

\begin{disc}
In \cite{MiSh:996} we gave a result showing ``decay of saturation'' for non-simple theories using a combinatorial principle from \cite{KSV}. 
It would be very useful if the construction just given could be extended to e.g. the Keisler-minimum $TP_2$ theory, known from 
Malliaris \cite{mm4} as the theory $T_{feq}$ of infinitely many equivalence relations each
with infinitely many infinite classes which generically intersect. The analysis of ``fundamental formulas'' in \cite{mm5} may be relevant. 
\end{disc}

We conclude this section by showing:

\begin{cor} \label{cor1}
If there exists $\mu$ s.t. $\mu^{++} \leq \lambda$, i.e. if $\lambda \geq \aleph_2$, then there is a regular ultrafilter $\de_*$ on $\lambda$ such that
$\lcf(\aleph_0, \de_*) \geq \lambda^+$ but $\de_*$ does not saturate the random graph. 
\end{cor}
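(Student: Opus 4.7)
The plan is to apply Theorem~\ref{claim-new} to block saturation of $\trg$, and then complete to an ultrafilter via Fact~\ref{uf} along an enumeration of cofinality $\geq \lambda^+$, using Fact~\ref{fact-lcf} to force $\lcf(\aleph_0, \de_*)$ to be large.

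First I would fix $\mu$ with $\mu^{++}\leq \lambda$ and $|I|=\lambda$, and produce (via Theorem~\ref{thm-iff} together with post-composing with surjections onto $\mu$, which preserves independence) a $(\lambda,\mu)$-good triple $(I,\de_0,\gee_0)$ with $\de_0$ regular, $|\gee_0|=2^\lambda$, and $\rn(g)=\mu$ for each $g\in\gee_0$. Select any $g_*\in\gee_0$ and put $\gee:=\gee_0\setminus\{g_*\}$, so $|\gee|=2^\lambda$. Since $\mu^+<\mu^{++}\leq\lambda$ and $\rn(g_*)=\mu$, Theorem~\ref{claim-new} applies and yields a filter $\de\supseteq\de_0$ such that $(I,\de,\gee)$ is $(\lambda,\mu)$-good and no ultrafilter on $I$ extending $\de$ saturates $\trg$ (failure of $\mu^{++}$-saturation implies failure of $\lambda^+$-saturation, as $\mu^{++}\leq\lambda^+$).

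Next I would complete $\de$ to the desired ultrafilter $\de_*$ while controlling $\lcf(\aleph_0,\de_*)$. Enumerate $\gee$ as $\langle f_\beta : \beta<2^\lambda\rangle$, set $\eff_\eta=\{f_\beta:\beta<\eta\}$, and apply Fact~\ref{uf} with $\alpha=2^\lambda$ and empty residual family. At each successor stage $\eta+1$ I would adjoin to $D_{\eta+1}$ the sets $\{t\in I: n<f_\eta(t)<\aleph_0\}$ for every $n<\omega$; these are nonzero modulo $D_\eta$ because $\eff^\eta$ is independent mod $D_\eta$ and $\rn(f_\eta)=\mu\geq\aleph_0$, so each such set contains $f_\eta^{-1}(\{n+1\})\neq\emptyset\mod D_\eta$. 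By Fact~\ref{fact-lcf}, $\lcf(\aleph_0,\de_*)=\cf(2^\lambda)>\lambda$ by K\"onig's theorem, hence $\lcf(\aleph_0,\de_*)\geq\lambda^+$. Regularity of $\de_*$ follows from regularity of $\de_0$, and non-saturation of $\trg$ from the conclusion of Theorem~\ref{claim-new} since $\de_*\supseteq\de$.

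No step is a genuine obstacle: the output of Theorem~\ref{claim-new}---a $(\lambda,\mu)$-good triple whose residual family still has full size $2^\lambda$---is precisely the initial datum needed to run Fact~\ref{uf} and Fact~\ref{fact-lcf} for a cofinality-controlled extension. The only thing to watch is that the ``block'' introduced by Theorem~\ref{claim-new} (which is built using ordinal $\mu^+$) does not lower the cofinality available to the second half of the construction, and indeed it does not, since $|\gee|=2^\lambda$ and we enumerate the final extension of length $2^\lambda$ continuously.
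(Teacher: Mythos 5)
Your proof is correct and follows essentially the same approach as the paper: apply Theorem~\ref{claim-new} to block random-graph saturation, then use Facts~\ref{uf} and~\ref{fact-lcf} over the remaining family of size $2^\lambda$ to force $\lcf(\aleph_0)$ up to $\cf(2^\lambda) > \lambda$. The only cosmetic difference is that the paper simply takes $\mu = \aleph_0$ (the minimal case), so that the residual functions already have range $\aleph_0$, whereas you keep $\mu$ general and must therefore explicitly adjoin the sets $\{t : n < f_\eta(t) < \aleph_0\}$; both routes work, and yours is more explicit about the second-stage bookkeeping that the paper dispatches with ``there is no barrier to constructing $\de_*$.''
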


\begin{proof}
First, apply Theorem \ref{claim-new} in the case $\mu = \aleph_0$ 
to build $\de$ as stated there, so $(I, \de, \gee)$ is $(\lambda, \aleph_0)$-good. By Fact \ref{fact-lcf} and the assumption that
$|\gee| = 2^\lambda$, there is no barrier to constructing $\de_* \supseteq \de$ so that $\lcf(\aleph_0, \de_*) \geq \lambda^+$.
\end{proof}

\begin{rmk}
Corollary \ref{cor1} gives a proof in ZFC of a result which we had previously shown using the existence of a measurable cardinal, 
see the table of implications in \cite{MiSh:996} \S 4. 
\end{rmk}

\section{For $\kappa$ measurable, $\lambda \geq \kappa^+$ there is $\de$ on $\lambda$ flexible, not good} \label{s:meas}
In this section we prove that for $\lambda \geq \kappa^+$ and $\kappa > \aleph_0$ a measurable cardinal, there is a regular
ultrafilter $\de$ on $\lambda$ which is $\lambda$-flexible but not $\kappa^{++}$-good. 
This addresses a problem from Dow 1975 [asking whether, in our language, $\lambda^+$-flexible implies $\lambda^+$-good]. 
This complements our answer in the companion paper, \cite{MiSh:996} Theorem 6.4,
which showed by taking a product of a regular, good ultrafilter on $\lambda = \lambda^\kappa$ with a $\kappa$-complete ultrafilter on $\kappa$
that there is a regular $\de$ on $\lambda$ which is flexible but not $(2^\kappa)^+$-good for any unstable theory. 
There are some analogies between the proofs, but the method developed here appears more general.
We improve the gap between the cardinals, however we can say less about the source of the omitted type.

Our setup for this section will be as follows. We start with $\ba$, a complete $\kappa^+$-c.c. Boolean algebra, $\kappa \leq \lambda$.
By transfinite induction we build a filter $\de$ on an index set $I$, $|I| = \lambda$ so that there is a surjective homomorphism
$h : \mcp(I) \rightarrow \ba$ with $h^{-1}(1_\ba) = \de$. We can then extend $\de$ to an ultrafilter by choosing some 
ultrafilter $E$ on $\ba$ and letting $\fil = \{ A \subseteq I : h(A) \in E \}$. In the main case of interest, $E$ is $\theta$-complete
for some $\theta$, $\aleph_0 < \theta \leq \kappa$ (necessarily measurable). Let us now give such objects a name. 

\begin{defn} \label{p21}
Let $\mk$ be the class of $\mx = ( I, \lambda, \kappa, \theta, \de, \mh, \ba, E)$ where $I$ is a set of cardinality $\geq \lambda$, and:
\begin{enumerate}
\item $\de$ is a $\lambda$-regular filter on $I$
\item $\ba$ is a complete $\kappa^+$-c.c. Boolean algebra, $\kappa \leq \lambda$.
\item $\mh$ is a homomorphism from $\mathcal{P}(I)$ onto $\ba$ with $\de = \mh^{-1}(1_{\ba})$
\item $E$ is an (at least) $\theta$-complete ultrafilter on $\ba$
\item if $\epsilon_* < \theta$ and $\langle a_\epsilon : \epsilon < \epsilon_*\rangle$ is a maximal antichain of $\ba$ then we can find a partition
$\langle A_\epsilon : \epsilon < \epsilon_* \rangle$ of $I$ such that
$\bigwedge_\epsilon \mh(A_\epsilon) = a_\epsilon$
\end{enumerate}
\end{defn}

Recall that $\langle a_\epsilon : \epsilon < \epsilon_* \rangle$ is a maximal antichain of $\ba$ when 
each $a_\epsilon > 0_\ba$, and $\epsilon < \zeta \implies a_\epsilon \cap a_\zeta = 0_\ba$.
We will use $\de_\bx, I_\bx$, and so on to refer to objects from the tuple $\bx$.

\begin{defn}
For $\bx \in \mk$, we define:
\begin{enumerate}
\item $\fil_\bx = \{ A \subseteq I : h(A) \in E_\bx \}$,  the ultrafilter induced on $I$ 
\item Let $\Theta_\bx = $
\[ \{ \theta_* ~:~ \mbox{there is a partition $\langle A_i : i < \theta_* \rangle$ of $I$ s.t. 
$\langle \mh(A_i) : i < \theta_* \rangle$ is a maximal antichain of $\ba$} \} \]
\item We say $\bx$ is $\sigma$-good if $\de_\bx$ is $\sigma$-good; we say it is good if $\de_\bx$ is $\lambda^+$-good
\item We say $\bx$ is flexible when $\de_\bx$ is $\lambda$-flexible.
\end{enumerate}
\end{defn}

\begin{rmk}
Note that in $\bx$, Definition \ref{p21}(5), we distinguish one lower-case $\theta$ with a related but not identical meaning:
if $\theta_\bx = \aleph_1$, then $\aleph_0 \in \Theta_\bx$.
\end{rmk}

In the next definition, we ask about the reverse cofinality of certain sets of $\fil(\mx)$-nonstandard elements
which are already $\de_\mx$-nonstandard. 
 
\begin{defn} \emph{(On coinitiality)} \label{lcf-sigma}
\begin{enumerate}
\item For $\bx \in \mk$ let  
\[ \eff_{\sigma, \de_\bx} = \{ f \in {^I\sigma} ~:~ ({i<\sigma} \implies i<_{\fil(\bx)} f ) ~~\mbox{moreover}~~ 
({i<\sigma} \implies \{ t \in I : f(t) > i \} \in \de_\bx) \} \]
\item If there exists $\delta = \cf(\delta)$ and $f_\alpha \in \eff_{\sigma, \de_{\mx}}$
for $\alpha < \delta$ such that:
\begin{itemize}
\item $\alpha < \beta  \implies f_\beta <_{\de_\mx} f_\alpha$
\item if $f \in \eff_{\sigma, \de_{\mx}}$ then $\bigvee_{\alpha < \delta} f_\alpha <_{\de_\mx} f$
\end{itemize} 
then say that $\lcf_\sigma(\bx) := \delta$. Here $\lcf_\sigma(\bx)$ is the true cofinality of $(\eff_{\sigma, \de_\bx}, >_{\de_\bx})$,
which is not always well defined but is equal to $\delta$ if such exists. $($This is not the same as 
$\lcf_\sigma(\fil_\mx) = \delta$.$)$ Below, ``assume $\lcf_\sigma(\bx)$ is well defined''
will mean: suppose there is such a $\delta$.

\item For $F$ an ultrafilter on $I$, $|I| = \lambda$, and $\aleph_0 \leq \sigma \leq \lambda$ the lower cofinality of $\sigma$ with respect to $F$,
$\lcf(\sigma, F)$ is the cofinality of the set of elements above the diagonal embedding of $(\sigma, <)$ in $(\sigma, <)^\lambda/F$ considered with the
reverse order, i.e. the coinitiality of $\sigma$ in $(\sigma, <)^\lambda/F$.
\end{enumerate}
\end{defn}

In the remainder of the section, we work towards a proof of Theorem \ref{flex-not-good-b}.

First, in Claim \ref{bx-exists} we show existence of a $\lambda^+$-good, $\lambda$-regular filter $\de$ with the
desired map to $\ba$. 

\begin{claim} \label{bx-exists}
Assume $\kappa > \aleph_0$ is a measurable cardinal, $E$ a uniform $\kappa$-complete ultrafilter on $\kappa$, $\lambda \geq \kappa$,
$\delta = \cf(\delta) \in [\lambda^+, 2^\lambda]$. 
Then there is $\bx \in \mk$ such that:
\begin{enumerate}
\item $\lambda_\bx = I_\bx = \lambda$
\item $\ba$ is the Boolean algebra $\mcp(\kappa)$, $\kappa_\mx = \kappa$, \item $E_\mx = E$, $\theta_\mx = \aleph_1$
\item $\de_\mx$ is a $\lambda^+$-good, $\lambda$-regular filter on $I$
\item $\kappa \in \Theta_\mx$
\item (if desired) $\lcf(\mx) = \delta$  (remark: can allow smaller $\delta$ but then goodness goes down)
\end{enumerate}
\end{claim}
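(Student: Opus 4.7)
The plan is to build $\de_\bx$ as a ``product'' of $\kappa$ many $\lambda^+$-good, $\lambda$-regular ultrafilters indexed by a partition of $I$, so that the surjective homomorphism $\mh$ onto $\ba = \mcp(\kappa)$ falls out of the partition structure. Fix $I = \lambda$ and partition $I = \bigsqcup_{\alpha<\kappa} I_\alpha$ with $|I_\alpha|=\lambda$ (possible since $\kappa\leq\lambda$). Set $\ba = \mcp(\kappa)$, which is complete and $\kappa^+$-c.c.; take $E_\bx = E$ viewed as an ultrafilter on $\ba$, and $\theta_\bx = \aleph_1$, which is legal since $E$ is $\kappa$-complete, hence $\aleph_1$-complete. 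For each $\alpha<\kappa$, use the Keisler--Kunen construction via independent families of functions (Fact \ref{uf}) to build a $\lambda$-regular, $\lambda^+$-good ultrafilter $\de^\alpha$ on $I_\alpha$. Then define
\[ \mh(A) \;=\; \{\alpha < \kappa : A \cap I_\alpha \in \de^\alpha\}, \qquad \de_\bx \;=\; \mh^{-1}(1_\ba) \;=\; \{A\subseteq I : A\cap I_\alpha \in \de^\alpha \text{ for every } \alpha\}. \]

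The verification is direct. Because each $\de^\alpha$ is an ultrafilter, $\mh$ preserves $\cap$, $\cup$, and complementation; it is surjective since $\mh(\bigcup_{\alpha\in S} I_\alpha)=S$ for every $S\subseteq\kappa$. For $\lambda$-regularity of $\de_\bx$: regularizing families $\{X^\alpha_i : i<\lambda\}\subseteq\de^\alpha$ amalgamate to $X_i := \bigcup_\alpha X^\alpha_i \in \de_\bx$, and $\bigcap_{i\in\sigma} X_i \cap I_\alpha = \bigcap_{i\in\sigma} X^\alpha_i = \emptyset$ for any infinite $\sigma\subseteq\lambda$. For $\lambda^+$-goodness: given monotonic $f:\fss(\lambda)\to\de_\bx$, set $f_\alpha(u) := f(u)\cap I_\alpha \in \de^\alpha$, apply goodness of $\de^\alpha$ to obtain a multiplicative refinement $f'_\alpha$, and let $f'(u) := \bigcup_\alpha f'_\alpha(u)$; disjointness of the $I_\alpha$ kills the cross terms in $f'(u)\cap f'(v)$, giving multiplicativity, and $f'(u)\cap I_\alpha = f'_\alpha(u) \in \de^\alpha$ so $f'(u)\in\de_\bx$. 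For $\kappa\in\Theta_\bx$: the partition $\langle I_\alpha : \alpha<\kappa\rangle$ maps to the singletons $\langle\{\alpha\}\rangle$, a maximal antichain of $\mcp(\kappa)$. For clause (5) of Definition \ref{p21}: for any maximal antichain $\langle a_\epsilon : \epsilon<\epsilon_*\rangle$ of $\ba$, the sets $A_\epsilon := \bigcup_{\alpha\in a_\epsilon} I_\alpha$ partition $I$ and satisfy $\mh(A_\epsilon) = a_\epsilon$.

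For the optional clause $\lcf(\bx) = \delta$, arrange each $\de^\alpha$ during its inductive construction so that $\lcf(\aleph_0,\de^\alpha) = \delta$: by Fact \ref{fact-lcf}, organize the construction along an ordinal of cofinality $\delta$, continuously consuming a $<_{\de^\alpha}$-decreasing cofinal sequence $\langle f^\alpha_\beta : \beta<\delta\rangle$ of $\de^\alpha$-nonstandard functions into $\omega$ (legal since $\lambda^+\leq\delta\leq 2^\lambda$ and there are $2^\lambda$ independent functions available). Glue via $f_\beta\restriction I_\alpha := f^\alpha_\beta$. Each $f_\beta$ lies in $\eff_{\omega,\de_\bx}$ because $\{t : f_\beta(t)>i\}\cap I_\alpha \in \de^\alpha$ for all $\alpha,i$, and the sequence is $<_{\de_\bx}$-decreasing. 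Coinitiality: given $f\in\eff_{\omega,\de_\bx}$, each $f\restriction I_\alpha$ is $\de^\alpha$-nonstandard, so there is $\beta_\alpha<\delta$ with $f^\alpha_{\beta_\alpha} <_{\de^\alpha} f\restriction I_\alpha$; since $\cf(\delta) = \delta \geq \lambda^+ > \kappa$, the supremum $\beta_* := \sup_\alpha \beta_\alpha$ is below $\delta$, and then $f_{\beta_*} <_{\de_\bx} f$. The main conceptual point is recognizing that the product-over-a-partition construction automatically realizes a Boolean quotient while preserving $\lambda^+$-goodness; the main technical delicacy lies in the coinitiality calculation, whose success depends critically on $\cf(\delta) > \kappa$ so that witnesses $\beta_\alpha$ across the $\kappa$ pieces can be simultaneously bounded.
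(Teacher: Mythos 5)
Your proposal is correct, and it takes a genuinely different, more elementary route than the paper's proof. The paper works inside the independent-families framework: starting from a $(\lambda,\lambda)^+$-good triple $(I,\de_0,\gee_*)$, it splits $\gee_*$ into two families, seeds $\mh$ on the level sets $g_a^{-1}(0)$ of one family, and then runs a $2^\lambda$-step transfinite construction, interleaving ``supporting'' steps (via Claim~\ref{hom-base}) with multiplicative-refinement steps, to grow the filter and the homomorphism together until $\mh^{-1}(1_\ba)=\de_\bx$, $\de_\bx$ is $\lambda^+$-good, and every $A\subseteq I$ lies in $\dom(\mh)$. Your product-over-a-partition construction sidesteps all of that: with $\kappa$ disjoint blocks $I_\alpha$ of size $\lambda$ and an independently built $\lambda^+$-good regular ultrafilter $\de^\alpha$ on each, the homomorphism $\mh(A)=\{\alpha : A\cap I_\alpha \in \de^\alpha\}$ and the filter $\de_\bx=\mh^{-1}(1_\ba)$ come for free, and regularity, goodness (disjointness of the blocks kills the cross terms in the multiplicative refinement), Definition~\ref{p21}(5), and the coinitiality computation (using $\cf(\delta)\geq\lambda^+>\kappa$ to bound the $\kappa$-many witnesses $\beta_\alpha$) are all transparent block-by-block gluings; indeed your $\bx$ satisfies Definition~\ref{p21}(5) for maximal antichains of arbitrary size, not merely countable ones. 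What the paper's machinery buys in exchange is generality: Steps~1--2 of its proof use only that $\ba$ is complete and $\kappa^+$-c.c., so the construction of $\mh$ transfers to any such $\ba$, whereas your argument leans essentially on $\mcp(\kappa)$ being an atomic power-set algebra whose atoms are the singletons. Since the claim as stated only needs $\ba=\mcp(\kappa)$, your argument suffices and is arguably cleaner in this case.
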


\begin{proof} 
There are three main steps.

\step{Step 0: Setup.}
We begin with a $(\lambda, \lambda)^+$-good triple $(I, \de_0, \gee_*)$ [here the $^+$ means all functions have range $\lambda$ as
opposed to $\leq \lambda$]. 
That is: $\gee_* \subseteq {^I\lambda}$ is an independent family of functions
each of which has range $\lambda$, $|\gee_*| = 2^\lambda$, $\de_0$ is a $\lambda$-regular filter on the index set $I$, and $\de_0$ is maximal such that 
the family $\gee$ is independent modulo $\de_0$. 

$\ba = \mcp(\kappa)$ is a complete $\kappa^+$-c.c. Boolean algebra, so condition (5) will follow from the fact that $\mh$ is a homomorphism with range $\ba$. 

If we want to ensure condition (6), we may for transparency begin with a good triple $(I, \de_0, \gee_* \cup \eff_*)$ where 
$\gee_* \subseteq {^\lambda \lambda}$, $\eff_* \subseteq {^\lambda \aleph_0}$ and $|\gee_*| = |\eff_*| = 2^\lambda$. We then modify the construction below by enumerating the steps 
by an ordinal divisible by $2^\lambda$ and with cofinality $\delta$, e.g. $2^\lambda \times \delta$. 
Enumerate $\eff_*$ as $\langle f_\alpha : \alpha < 2^\lambda \times \delta \rangle$. 
At odd steps in the induction we proceed as stated below, and at even steps $2\alpha$ we consume the function $f_\alpha$. Since 
$\{ f_\gamma : \gamma > \alpha \}$ will remain independent modulo $\de_\alpha$ under this modification, the sequence
$\langle f_{2^\lambda \times i} : i < \delta \rangle$ will witness the desired lower cofinality. Note that the assumption $\delta \geq \lambda^+$
is important here as we may not otherwise obtain an ultrafilter.

\step{Step 1: Setting up the homomorphism.}

Let $\gee_* = \eff \cup \gee$, $\eff \cap \gee = \emptyset$, $|\eff| = 2^\lambda$, and $\gee = |\ba|$.

We first define several subalgebras of $\ba$: 

Let $\ma_2 = \{ A \subseteq I : A  ~\mbox{is supported by}~ \gee \mod \de_0\}$. Recall that $A \subseteq I$ is supported by 
an independent family mod $\de_0$ if there is a partition $\{ X_j : j \in J \}$ of the index set $I$ with $j \in J \implies
X_j \in \fin_s(\gee)$ and such that for each $j \in J$, either $X_j \subseteq A \mod \de_0$ or $X_j \cap A = \emptyset \mod \de_0$. 
Thus $\ma_2$ is a subalgebra of $\mcp(I)$ which contains all $X \in \de_0$ as well as $\{ g^{-1}(0) : g \in \gee \}$.  

Let $\langle g_{a} : a \in \ba \rangle$ enumerate $\gee$. 

Let $\ma_0$ be the subalgebra $\{ A \subseteq I : A \in \de_0~\mbox{or} ~I \setminus A \in \de_0\}$.

Let $\ma_1$ be the subalgebra generated by $\ma_0 \cup \{ g^{-1}_b(0) : b \in \ba \}$. 

We now define several corresponding homomorphisms:

Let $h_0$ be the homomorphism from $\ma_0$ into $\ba$ given by $A \in \de_0\implies h_0(A) = 1_\ba$.

Let $h_1$ be a homomorphism from $\ma_1$ into $\ba$ which extends $h_0$ such that $h_1(g^{-1}_{a}(0)) = a$. 

Let $h_2$ be a homomorphism from $\ma_2$ into $\ba$ which extends $h_0$; this is possible since $\ba$ is complete. 

\step{Step 2: The inductive construction.}
Having defined $h_2$, we would like to extend $\de_0$ to a filter $\de$  and $h_2$ to $h$ as given by the Claim, and we have two 
main tasks to accomplish: first, that $\de$ is $\lambda^+$-good, and second that  $h: \mcp(I) \rightarrow \ba$ satisfies
$A \in \de \implies h(A) = 1_\ba$.

Let $\langle A_i : i < 2^\lambda \rangle$ enumerate $\mcp(I)$. Let $\langle \overline{A}^\times_j : j < 2^\lambda \rangle$ enumerate
all possible multiplicative tasks, each occurring cofinally often. 
Recall that in order to ensure our eventual filter $\de$ is good we need to ensure that any monotonic function
$\pi: \fss(\lambda) \rightarrow \mcp(I)$ whose range is included in $\de$ has a refinement $f^\prime$ which is multiplicative, i.e.
$\pi(u) \cap \pi(v) = \pi(u \cup v)$. To produce the list $\langle \overline{A}^\times_j : j < 2^\lambda \rangle$, where each $\overline{A}^\times_j$ 
is a sequence of elements of $\mcp(I)$, fix in advance some enumeration of $\fss(\lambda)$ and identify each monotonic $\pi: \fss(\lambda) \rightarrow
\mcp(I)$ with its range. 

Interpolating these two lists gives a master list of tasks for the construction: at odd successor stages we will deal with sets $A_i \subseteq I$,
at even successor stages we deal with possible multiplicative tasks $\overline{A}^\times_j$.
We choose $(D_\alpha, h_\alpha, \ma_\alpha, \eff_\alpha)$ by induction on $\alpha \in [2, 2^\lambda)$ such that:

\begin{enumerate}
\item $\de_2 = \{ A \subseteq I : h_2(A) = 1 \}$
\item $\langle \de_\alpha : \alpha \in [2, 2^\lambda] \rangle$ is increasing continuously with $\alpha$, and $\de_{2^\lambda}$ is an ultrafilter
\item $\langle \eff_\alpha : \alpha \in [2, 2^\lambda] \rangle$ is decreasing continuously with $\alpha$; $\eff_2 = \eff$, $\alpha < \beta \implies \eff_\alpha \supseteq \eff_\beta$, $|\eff_\alpha \setminus \eff_{\alpha + 1} | \leq \lambda$, $\eff_{2^\lambda} = \emptyset$
\item $(I, \de_\alpha, \eff_\alpha \cup \gee)$ is a good triple, meaning that the functions $\eff_\alpha \cup \gee$ 
remain independent modulo $\de_\alpha$ and $\de_\alpha$ is maximal for this property
\item $\ma_\alpha = \{ A \subseteq I ~:~ \mbox{there is $B \in \dom(h_2)$ such that} ~A = B \mod \de_\alpha \}$ 
\item $h_\alpha \in \Hom(\ma_\alpha, \ba)$ and $\langle h_\alpha : \alpha \in [2, 2^\lambda] \rangle$ is increasing continuously with $\alpha$
\item if $\alpha = 2\beta + 1$, then $A_\beta \in \ma_\alpha$
\item if $\alpha = 2\beta + 2$ and $\overline{A}^\times_\alpha = \langle A^\times_{\beta, u} : u \in [\lambda]^{<\aleph_0} \rangle$,
and $u \in [\lambda]^{<\aleph_0} \implies A^\times_{\beta, u} \in D_{2\beta + 1}$, then it has a multiplicative refinement in 
$D_\alpha$  
\end{enumerate}

For condition (7), $\alpha = \beta + 1$: if neither $A$ nor $I \setminus A$ is empty modulo $\de_\beta$ (in which case we are done) apply Claim \ref{hom-base}
which returns a pre-good triple $(I, \de^\prime, \gee \cup (\eff_\beta \setminus \eff^\prime))$, where $|\eff^\prime| \leq \lambda$.
Let $\eff_{\alpha} = \eff_\beta \setminus \eff^\prime$, and
without loss of generality, extend the pre-good triple returned by Claim \ref{hom-base} to a good triple 
$(I, \de_\alpha, \gee \cup \eff_\alpha)$.

For condition (8), $\alpha = \beta + 2$: choose any $g \in \eff_{\beta + 1}$ and let $\eff_\alpha = \eff_{\beta+1} \setminus \{ g \}$. Using $g$, we can produce
a multiplicative refinement for $\langle A^\times_{\alpha, u} : u \in [\lambda]^{<\aleph_0} \rangle$ while keeping $\eff_{\beta+1} \setminus \{ g \}$
independent, as in the usual construction of good filters.
See for instance \cite{Sh:c} Claim 3.4 p. 346.

Finally, we verify that the construction satisfies Definition \ref{p21}(6) for $\theta_\bx = \aleph_1$:  
As $\epsilon_* < \theta = \aleph_1$, without loss of generality $\epsilon_* \leq \omega$ so without loss of generality $\epsilon_* = \omega$. 
We can choose by induction on $n$, sets $A^\prime_n \subseteq I \setminus \bigcup \{ A^\prime_m : m < n \}$ such that $\mh(A^\prime_n) = a_n$,
and then let $A_n$ be $A^\prime_n$ if $n> 0$, and $I \setminus \{ A^\prime_{1+k} : k < \omega \}$ if $n=0$.
\end{proof}

We now prove a technical claim for Claim \ref{bx-exists}, used to ensure (7) of the induction. 

For ``supported,'' recall Definition \ref{d:ba-various} above. Note that in Claim \ref{hom-base}, what is shown is that $X$ 
is equivalent modulo $\de^\prime$ to a set elements of $\fin_s(\gee)$ which are pairwise disjoint (modulo $\de$). 
If desired, repeat the proof for $I \setminus X$ in place of $X$ to explicitly obtain a partition. 

\begin{claim} \label{hom-base}
Let $(I, \de, \gee \cup \eff)$ be a $(\lambda, \lambda)$-good triple. Let $X \subseteq I$, $X \in \de^+$.
Then there are $\eff^\prime \subseteq \eff, |\eff^\prime| \leq \lambda$ and a filter $\de^\prime \supseteq \de$
such that $(I, \de^\prime, \gee \cup (\eff \setminus \eff^\prime))$ is a $(\lambda, \lambda)$-good triple, and moreover
$X$ is supported by $\gee \mod \de^\prime$.
\end{claim}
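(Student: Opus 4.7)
The plan is to extract a bounded partition witnessing that $X$ is determined by $\gee \cup \eff$ modulo $\de$, and then to collapse the $\eff$-part of that partition (a subfamily $\eff^\prime$ of size at most $\lambda$) by freezing the value of each function in $\eff^\prime$. Concretely, since $(I, \de, \gee \cup \eff)$ is good, Fact~\ref{good-dense} gives density of $\fin_s(\gee \cup \eff)$ in $\mcp(I)/\de$, and Fact~\ref{mcc} gives $CC(B(\de)) = \lambda^+$. A Zorn argument applied inside $X$ and inside $I \setminus X$ yields a maximal antichain $\langle A_{h_i} : i < \alpha^* \rangle$, with $\alpha^* \leq \lambda$ and $h_i \in \fin(\gee \cup \eff)$, which partitions $I$ modulo $\de$ and in which each $A_{h_i}$ is either contained in $X$ or disjoint from $X$ modulo $\de$. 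Decomposing $h_i = g_i \cup e_i$ with $g_i \in \fin(\gee)$ and $e_i \in \fin(\eff)$, set $\eff^\prime := \bigcup_{i < \alpha^*} \dom(e_i) \subseteq \eff$, so $|\eff^\prime| \leq \lambda$.

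Choose any section $\vec v$ of $\eff^\prime$, meaning $\vec v(f) \in \rn(f)$ for each $f \in \eff^\prime$, and let $\de^\prime_0$ be the filter generated by $\de \cup \{f^{-1}(\vec v(f)) : f \in \eff^\prime\}$. For any $h \in \fin(\gee \cup (\eff \setminus \eff^\prime))$ and any finite $F \subseteq \eff^\prime$, the set $A_h \cap \bigcap_{f \in F} f^{-1}(\vec v(f))$ equals $A_{h \cup (\vec v \rstr F)}$, which is nonzero modulo $\de$ by the independence of $\gee \cup \eff$. This simultaneously shows that $\de^\prime_0$ is a proper filter and that $\gee \cup (\eff \setminus \eff^\prime)$ remains independent modulo $\de^\prime_0$.

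The key claim is that $\{A_{g_i} : i < \alpha^*,\, e_i \subseteq \vec v\}$ is a $\fin_s(\gee)$-partition of $B(\de^\prime_0)$ on which $X$ is based. Modulo $\de^\prime_0$, each $A_{e_i}$ equals $I$ if $e_i \subseteq \vec v$ and $\emptyset$ otherwise, so $A_{h_i}/\de^\prime_0$ equals $A_{g_i}/\de^\prime_0$ in the surviving case and $0$ otherwise; since the $A_{h_i}$'s partition $I$ modulo $\de$, the surviving $A_{g_i}$'s cover $B(\de^\prime_0)$. For pairwise disjointness: if $g_i \cup g_j$ were a function for distinct surviving $i, j$, then because $e_i$ and $e_j$ are also compatible (both restrictions of $\vec v$), the cell $A_{h_i} \cap A_{h_j} = A_{g_i \cup g_j \cup e_i \cup e_j}$ would be nonzero modulo $\de$ by independence, contradicting disjointness in the original partition. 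Each surviving $A_{g_i}$ is equivalent to $A_{h_i}$ modulo $\de^\prime_0$ and so inherits being either $\subseteq X$ or disjoint from $X$ there. Extending $\de^\prime_0$ by Zorn to a filter $\de^\prime$ maximal subject to $\gee \cup (\eff \setminus \eff^\prime)$ remaining independent then yields the good triple demanded by the claim, and the partition above continues to witness $\gee$-support of $X$.

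The main obstacle is to extend $\de$ so that $X$ becomes $\gee$-supported without destroying independence of $\gee \cup (\eff \setminus \eff^\prime)$; forcing $X$ directly into the filter would spoil this independence whenever some $A_h$ with $h \in \fin(\gee \cup (\eff \setminus \eff^\prime))$ happens to be disjoint from $X$ modulo $\de$. The section-trick resolves this, since the new generators involve only functions of $\eff^\prime$ (which are being discarded anyway) and, by independence, meet every condition on the surviving family $\gee \cup (\eff \setminus \eff^\prime)$.
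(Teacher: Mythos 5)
Your plan---extract a $\fin_s(\gee\cup\eff)$-partition of $B(\de)$ that decides $X$, then ``collapse'' the $\eff$-part by freezing a section---is the right idea and is, at a high level, what the paper does. But the step where you ``choose any section $\vec v$'' and assert that ``since the $A_{h_i}$'s partition $I$ modulo $\de$, the surviving $A_{g_i}$'s cover $B(\de'_0)$'' does not follow and is in fact false in general. A maximal antichain of $B(\de)$ is not mapped to a maximal antichain of $B(\de'_0)$ under the quotient: it may happen that every cell compatible with some $A_{h^*}$, $h^*\in\fin(\gee)$, has its $e_i$-part incompatible with the chosen $\vec v$, so that no surviving cell meets $A_{h^*}$ even though $A_{h^*}\in(\de'_0)^+$. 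Concretely, take $g\in\gee$ and $\{f_n:n<\omega\}\subseteq\eff$, and consider the maximal antichain consisting of the cells $g^{-1}(\beta)$ for $\beta\neq 0$ (with $e_i=\emptyset$) together with the cells $g^{-1}(0)\cap\bigcap_{m<n}f_m^{-1}(0)\cap f_n^{-1}(\gamma)$ for $n<\omega$, $0<\gamma<\lambda$ (with $g_i=\{g\mapsto 0\}$). This is a maximal antichain of $B(\de)$, since $\bigcap_n f_n^{-1}(0)=\emptyset\mod\de$ by maximality of $\de$. Now take $\vec v\equiv 0$: then no cell with $g_i=\{g\mapsto 0\}$ survives, and the surviving family $\{g^{-1}(\beta):\beta\neq 0\}$ fails to cover the $\de'_0$-positive set $g^{-1}(0)$. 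If $X$ is, say, the union of the even-indexed cells below $g^{-1}(0)$, then $g^{-1}(0)\cap X$ and $g^{-1}(0)\setminus X$ are both $(\de'_0)^+$, so $X$ is not $\gee$-supported mod $\de'_0$, and extending $\de'_0$ to a maximal good filter cannot repair this since $g^{-1}(0)$ must remain positive and undecided with respect to $X$ (as $g^{-1}(1)$ must remain positive).

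The point where this bites is exactly the point the paper's proof is designed to handle. The paper builds the antichain and the section \emph{simultaneously}: at stage $i$, the filter $\de_i$ already contains the sets $\{s:f(s)=h_j(f)\}$ for $j<i$, $f\in\dom(h_j)\cap\eff$, and the new cell $A_{h_i}$ is chosen inside the as-yet-uncovered region with $h_i\in\fin(\gee\cup\eff^i)$, so its $\eff$-part uses only fresh functions and automatically extends (rather than conflicts with) the partial section accumulated so far. Under that discipline, disjointness mod $\de_i$ forces the $\gee$-parts to be pairwise incompatible (the $\eff$-parts are compatible by construction), and coverage is guaranteed by termination of the induction via Fact~\ref{mcc}. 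Your version decouples the two choices, and the ``any section works'' claim is precisely what fails. One could try to salvage your approach by arguing that \emph{some} section $\vec v$ works, but you would then have to prove existence of a suitable branch, which is essentially the content of the paper's induction anyway.
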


\begin{proof}
Our strategy is to try to choose $A_{h_i} \in \fin_s(\gee \cup \eff)$, $\de_i \supseteq \de$, $\eff^i \subseteq \eff$ 
by induction on $i < \lambda^+$, subject to the following conditions. 
\begin{enumerate}
\item For each $i$, $A_{h_i} \neq \emptyset \mod \de_i$, and either $A_{h_i} \subseteq X \mod \de_i$ or else $A_{h_i} \cap X = \emptyset \mod \de_i$
\item $j<i$ implies $A_{h_j} \cap A_{h_i} = \emptyset \mod \de_i$, but
\[   f \in \left(  \dom(h_j) \cap \eff  \right) \cap \left(  \dom(h_i) \cap \eff  \right) 
\implies h_j(f) = h_i(f)    \]
\item For each $i$, $\eff^i = \{ f \in \eff : (\forall j < i) (f \notin \dom(h_j) \}$
\item For each $i$, $\de_{i}$ extends the filter generated by 
\[ \de \cup \{ \{ s \in I : f(s) = h_j(f) \} : j < i, f \in \dom(h_j) \cap \eff\} \]
and moreover $(I, \de_i, \gee \cup \eff^i)$ is a $(\lambda, \lambda)$-good triple.
\end{enumerate}

For $i=0$, let $\de_0 = \de, \eff^0 = \eff$, and choose $A_{h_0} \subseteq X \mod \de$ by Fact \ref{good-dense}.

For $i>0$, we first describe the choice of $\de_i$. Let $\de^*_{i}$ be the filter generated by 
\[ \de \cup \{ \{ s \in I : f(s) = h_j(f) \} : j < i, f \in \dom(h_j) \cap \eff\} \]
The choice of $\eff^i$ is determined by condition (3). 
Notice that by condition (2) above, it will always be the case that
\[ (I, \de^*_i, \gee \cup \eff^i) \]
is a $(\lambda, \lambda)$-pre-good triple. Without loss of generality, we may extend $\de^*_i$ to $\de_i$ which satisfies
condition (4). 

\br
For the choice of $A_{h_i}$, the limit and successor stages are the same, as we now describe.

At stage $i < \lambda^+$, suppose we have defined $\de_i$. 

Let $Y_i = \bigcup \{ A_{h_j} : j < i , A_{h_j} \subseteq X \mod \de_i \}$
and let $\eff^i = \{ f \in \eff : (\forall j < i) (f \notin \dom(h_j) \}$. 

When attempting to choose $A_{h_i}$, one of two things may happen.

\step{Case 1.} 
The ``remainder'' is already small, meaning that 
\[ I \setminus Y_i = \emptyset \mod \de_i \] 
In this case, the internal approximation to $X$ indeed works, i.e.
\[ X = \bigcup\{ A_{h_j} : j < i, A_{h_j} \subseteq X \mod \de_i \}  \mod \de_i \]
For each $j < i$ let $h^\prime_j$ be the restriction of $h_j$ to $\gee$. Then from the definition
of $\de_i$, 
\[ A_{h_j} \subseteq X \mod \de_i ~~ \iff  A_{h^\prime_j} \subseteq X \mod \de_i  \]
Moreover for $j<k<i$, by condition (2) above, this remains a partition:
\[ A_{h_j} \cap A_{h_k} =\emptyset \mod \de_k \implies A_{h^\prime_j} \cap A_{h^\prime_k} =\emptyset \mod \de_i \]
Thus $X$ is supported by $\fin_s(\gee) \mod \de_i$, namely
\[ X = \bigcup\{ A_{h^\prime_j} : j < i, A_{h_j} \subseteq X \mod \de_i \}  \mod \de_i \]
and we set $\de^\prime = \de_i$ and $\eff^\prime = \eff \setminus \eff^i$ to finish the proof. 

\step{Case 2.} Not Case 1, in which case
\[ I \setminus Y_i \neq \emptyset \mod \de_i \]
so as $(I, \de_i, \gee \cup \eff^i)$ is a good triple we may choose $A_{h_i} \subseteq I \setminus Y_i \mod \de$
with $h_i \in \fin_s(\gee \cup \eff^i)$, by Fact \ref{good-dense}. Note that compliance with (2) is ensured by
the definition of $\de_i$. This completes the inductive step at $i$. 

\br
Finally, suppose for a contradiction that the induction continues for all $i<\lambda^+$. Then by construction, 
$(I, \de_{\lambda^+}, \eff^{\lambda^+} \cup \gee)$ is a $(\lambda, \lambda)$-good triple,
and by Fact \ref{mcc}, $CC(B(\de_{\lambda^+})) \leq \lambda^+$. However, by the ``moreover'' line in Case 1,
$\{ A_{h_i} : i < \lambda^+ \}$ is a partition of $B(\de)$, contradiction. 

Thus the induction stops (i.e. we reach Case 1) at some bounded stage $i_* < \lambda^+$.
In particular, it will be the case that $|\eff^\prime| \leq \lambda$.
This completes the proof. 
\end{proof}

We now bring in $E$, and verify that the induced filter $\fil_{\mx}$ is regular and that certain $\fil(\bx)$-nonstandard elements
are $\fil(\bx)$-equivalent to elements which are already $\de_\bx$-nonstandard. 

\begin{claim} \label{p29a}
Assume $\bx \in \mk$ and $\sigma = \cf(\sigma) \in \Theta_\bx$.
\lp e.g. $\sigma = \aleph_0$ for the $\bx$ from Claim \ref{bx-exists}\rp
\begin{enumerate}
\item $\fil(\bx)$ is a regular ultrafilter on $I$
\item If $\sigma = \cf(\sigma) < \theta$ 
\newline \lp or just $\sigma = \cf(\sigma) \in \Theta_\bx$ : in our case, $\sigma = \aleph_0$ \rp

\noindent then if $g \in {^I \sigma}$ and $\bigwedge_n n <_{\fil(\bx)} g$ there is $f \in {^I \sigma}$ such that:
\begin{enumerate}
\item $f = g \mod \fil(\bx)$
\item $i < f \mod \de_\bx$ for every $i < \sigma$
\end{enumerate}
\end{enumerate}
\end{claim}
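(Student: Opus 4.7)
For part (1), the plan is to observe two facts. First, $\de_\bx \subseteq \fil(\bx)$ is immediate, since $A \in \de_\bx$ forces $h(A) = 1_\ba \in E$; this transfers $\lambda$-regularity from $\de_\bx$ directly to $\fil(\bx)$. Second, $\fil(\bx)$ is an ultrafilter because $h(A) \wedge h(I \setminus A) = 0_\ba$ and $h(A) \vee h(I \setminus A) = 1_\ba$, so as $E$ is an ultrafilter on $\ba$ exactly one of $A, I \setminus A$ lies in $\fil(\bx)$.

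For part (2) (focusing on the main case $\sigma = \aleph_0$, $\theta = \aleph_1$), the strategy is to partition $I$ into a ``good'' piece on which $g$ is already $\de_\bx$-nonstandard and a ``bad'' piece on which we will overwrite $g$ with something harmless. Let $Y_n = \{t : g(t) \leq n\}$ and $X_n = I \setminus Y_n$. The assumption $n <_{\fil(\bx)} g$ says $h(X_n) \in E$ for every $n$. Since $E$ is $\theta$-complete with $\theta > \aleph_0$, we get $\bigwedge_n h(X_n) \in E$; letting $b := \bigvee_n h(Y_n) \in \ba$ (using completeness of $\ba$), this means $1_\ba \setminus b \in E$ and in particular $b < 1_\ba$. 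By surjectivity of $h$, choose $A_1 \subseteq I$ with $h(A_1) = b$ and set $A_0 = I \setminus A_1$, so $h(A_0) = 1_\ba \setminus b \in E$, hence $A_0 \in \fil(\bx)$.

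The key observation is that on $A_0$ the function $g$ is already $\de_\bx$-nonstandard: for each $n$, $h(A_0 \cap Y_n) \leq h(A_0) \wedge h(Y_n) \leq (1_\ba \setminus b) \wedge b = 0_\ba$, so $A_0 \cap Y_n = \emptyset \mod \de_\bx$. To rescue the $A_1$ part, invoke $\lambda$-regularity (hence $\aleph_1$-incompleteness) of $\de_\bx$ to pick a decreasing sequence $\langle Z_n : n < \omega \rangle$ in $\de_\bx$ with $\bigcap_n Z_n = \emptyset$, and define $k : I \to \omega$ by $k(t) = \min\{n : t \notin Z_n\}$; since the $Z_n$ are decreasing, $\{t : k(t) > n\} = Z_n \in \de_\bx$. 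Finally set $f(t) = g(t)$ for $t \in A_0$ and $f(t) = k(t)$ for $t \in A_1$. Then $f = g$ on $A_0 \in \fil(\bx)$, so $f = g \mod \fil(\bx)$, while $h(\{t : f(t) > n\}) \geq h(A_0) \vee h(A_1 \cap Z_n) = (1_\ba \setminus b) \vee b = 1_\ba$, giving $f > n \mod \de_\bx$ for every $n$.

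The main conceptual step is recognizing that the $\theta$-completeness of $E$ forces $b = \bigvee_n h(Y_n)$ to lie strictly below $1_\ba$, which is what permits the clean split into $A_0$ (where $g$ is automatically good) and $A_1$ (where we substitute a regularity-induced nonstandard integer). Once this split is in hand the verification is a short Boolean-algebra computation; the same template adapts to any $\sigma = \cf(\sigma) < \theta$ by taking $b = \bigvee_{i<\sigma} h(Y_i)$ and using a $\sigma$-indexed sub-family of a regularizing family in place of the $Z_n$.
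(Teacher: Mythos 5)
Your proposal is correct and follows essentially the same route as the paper: in both, $\theta$-completeness of $E$ yields an $E$-large (hence $\fil(\bx)$-large) piece of $I$ on which $g$ is already $\de_\bx$-nonstandard, and $\lambda$-regularity of $\de_\bx$ supplies a $\de_\bx$-nonstandard substitute on the complementary piece; your $A_0$, $A_1$, and $k$ correspond directly to the paper's $A$, $I\setminus A$, and $g_1$. The only cosmetic difference is that you take the complete meet $\bigwedge_n h(X_n)$ itself rather than an arbitrary element of $E$ below it.
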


\begin{proof}
(1) $\fil(\bx)$ is an ultrafilter by Definition \ref{p21}(3)-(4), and inherits regularity from $\de$.

(2) Let $\sigma = \aleph_0$, $g \in {^I\sigma}$ be given, and suppose that $n \in \mathbb{N} \implies n <_{\fil(\bx)} g$.
For each $\epsilon < \sigma$, define $A_\epsilon = \{ t \in I : g(t) < \epsilon \}$, so 
$\langle A_\epsilon : \epsilon < \sigma \rangle$ is a $\subseteq$-increasing sequence of subsets of $I$ whose union is $I$. 
Let $a_\epsilon = \mh(A_\epsilon)$. 

If for some $\epsilon < \sigma$ we have that $a_\epsilon \in E$, then by definition $A_\epsilon \in \fil(\mx)$, and thus
$\epsilon \geq g \mod \fil(\mx)$, contradicting our assumption on $g$. So for all $\epsilon < \sigma$, $a_\epsilon \notin E$.
Since $E$ is a $\theta$-complete ultrafilter on $\ba$, there is $a \in E$ such that for all $\epsilon < \sigma$,
$0 <_\ba a \leq_\ba (1_\ba- a_\epsilon)$. 
Let $A \subseteq I$ be such that $\mh_\bx(A) = a$, so $\epsilon < \sigma \implies A \cap A_\epsilon = \emptyset \mod \fil(\bx)$. 

Since $\de_\bx$ is $\lambda$-regular, there is $g_1 \in {^I \sigma}$ such that $i < \sigma \implies i < g_1 \mod \de_\bx$ 
(it suffices to majorize some finite set of elements at each index). Now we can define $f \in {^I\sigma}$ by:
\[ f(t) = \begin{cases}
g(t) ~~ \mbox{if $t \in A $}  \\
g_1(t) ~~\mbox{if $t \in I \setminus A $} \\
\end{cases} \]
Clearly $f$ satisfies our requirements. 
\end{proof}

In Claim \ref{p29b} we verify that flexibility, which we were guaranteed only for $\de$, in fact remains true for the 
induced ultrafilter $\fil_\mx$.

\begin{claim} \label{p29b}
\begin{enumerate}
\item Assume $\aleph_0 < \theta_\bx$ (as holds in Claim \ref{bx-exists}). If $\de_\bx$ is a $\lambda$-flexible filter,
as in Claim \ref{bx-exists}, then also $\fil(\bx)$ is a $\lambda$-flexible ultrafilter. 
\item Assume that $\aleph_0 < \theta_\bx$, and $\lcf(\bx)$ is well defined, see above. Then $\lcf(\aleph_0,\fil(\bx)) = \lcf_{\aleph_0}(\bx)$.
Similarly for $\lcf(\sigma, \fil(\bx))$ and $\lcf_\sigma(\de_\bx)$. 
\end{enumerate}
\end{claim}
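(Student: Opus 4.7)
The plan is to reduce both parts to Claim \ref{p29a}(2), which lets us replace any $\fil(\bx)$-nonstandard element of $^I\aleph_0$ by one that is $\de_\bx$-nonstandard in the strong sense required by $\eff_{\aleph_0, \de_\bx}$. The standing hypothesis $\aleph_0 < \theta_\bx$ ensures this tool is available, and the key point is that the passage from $\de_\bx$ to $\fil(\bx)$ preserves the combinatorics of nonstandard integer-valued functions thanks to $\theta_\bx$-completeness of $E_\bx$.

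For (1), I would start with $g \in {^I\mathbb{N}}$ satisfying $n <_{\fil(\bx)} g$ for every $n < \omega$, and apply \ref{p29a}(2) to produce $f \in {^I\aleph_0}$ with $f = g \mod \fil(\bx)$ and $n < f \mod \de_\bx$ for every $n < \omega$. Now $\lambda$-flexibility of $\de_\bx$ applied to $f$ yields a $\lambda$-regularizing family $\langle Y_\alpha : \alpha < \lambda \rangle \subseteq \de_\bx$ below $f$. The family $\langle X_\alpha : \alpha < \lambda\rangle$ defined by $X_\alpha = Y_\alpha \cap Z$, where $Z = \{t : f(t) = g(t)\} \in \fil(\bx)$, then witnesses $\lambda$-flexibility of $\fil(\bx)$ below $g$: for $t \in Z$ the inequality transfers from $f$ to $g$, while for $t \notin Z$ the sets $X_\alpha$ are empty.

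For (2), writing $\delta = \lcf_{\aleph_0}(\bx)$ with witnessing sequence $\langle f_\alpha : \alpha < \delta\rangle \subseteq \eff_{\aleph_0, \de_\bx}$, the inclusion $\de_\bx \subseteq \fil(\bx)$ immediately shows that $\langle f_\alpha/\fil(\bx) : \alpha < \delta\rangle$ is strictly $<_{\fil(\bx)}$-decreasing and consists of $\fil(\bx)$-nonstandard elements. To show coinitiality above the standard part of $(\omega, <)^I/\fil(\bx)$, take any $\fil(\bx)$-nonstandard $g \in {^I\omega}$; by \ref{p29a}(2) there is $f \in \eff_{\aleph_0, \de_\bx}$ with $f = g \mod \fil(\bx)$, and by cofinality of the witnessing sequence modulo $\de_\bx$ there is $\alpha < \delta$ with $f_\alpha <_{\de_\bx} f$, whence $f_\alpha <_{\fil(\bx)} g$. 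Since $\delta$ is regular and the sequence strictly decreases, this gives $\lcf(\aleph_0, \fil(\bx)) = \delta$. The ``similarly'' clause for $\sigma \in \Theta_\bx$ is obtained by running precisely the same argument, since \ref{p29a}(2) is stated uniformly in $\sigma$.

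The main obstacle is bookkeeping rather than conceptual: in (1) flexibility is required pointwise on $I$ rather than only modulo the filter, which forces the intersection with $Z$; in (2) one must verify not only that each $f_\alpha$ remains $\fil(\bx)$-nonstandard but that the sequence is still cofinal in the coinitiality of $\omega$ in the ultrapower. Both are handled uniformly by \ref{p29a}(2), so no additional set-theoretic hypothesis beyond $\aleph_0 < \theta_\bx$ is needed.
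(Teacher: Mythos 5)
Your proposal is correct and follows essentially the same route as the paper's own proof: in part (1), apply Claim \ref{p29a}(2) to replace $g$ by a $\de_\bx$-nonstandard $f$, use $\lambda$-flexibility of $\de_\bx$ below $f$, and intersect the resulting regularizing family with $\{t : f(t)=g(t)\} \in \fil(\bx)$; in part (2), transport the $\de_\bx$-witnessing sequence to $\fil(\bx)$ and verify coinitiality again via Claim \ref{p29a}(2). The only cosmetic difference is that you note strict decrease mod $\fil(\bx)$ directly from $f_\beta <_{\de_\bx} f_\alpha$, which is cleaner than the paper's closing remark about the sequence not being eventually constant, but the substance is identical.
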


\begin{proof}
(1) As in Definition \ref{flexible}, let $g \in {^I\mathbb{N}}$ be such that $n \in \mathbb{N} \implies n<g \mod \fil(\mx)$. 
We would like to find a sequence $\langle U_\alpha : \alpha < \lambda \rangle$ of members of $\fil(\mx)$ such that
$t \in I \implies g(t) \geq |\{ a< \lambda : t \in U_\alpha \}$. 

Recalling that $\aleph_0 < \theta$, let $f$ be the function corresponding to $g$ from Claim \ref{p29a}(2). 
Let $A = \{ t \in I : f(t) = g(t) \} \in \fil(\bx)$. 
By assumption, $\de_\bx$ is a $\lambda$-flexible filter, so there is a sequence $\langle U^\prime_\alpha : \alpha < \lambda \rangle$
of members of $\de_\bx$ such that $t \in I \implies (f(t) \geq | \{ a < \lambda : t \in U^\prime_\alpha \} |$. For each $\alpha < \lambda$,
define $U_\alpha = U^\prime_\alpha \cap A$, and $U_\alpha \in \fil(\bx)$ as filters are closed under intersection. 
Finally, we verify $\langle U_\alpha : \alpha < \lambda \rangle$ is the desired regularizing set, by cases. 
If $t \in A$, by choice of the $\langle U^\prime_\alpha : \alpha < \lambda$ we have that $f(t) \geq | \{ a < \lambda : t \in U^\prime_\alpha \} |$,
and if $t \notin A$, $|\{ \alpha : t \in U_\alpha \}| = 0 \leq g(t)$.

(2) Let $\sigma$ be given. Let $\eff_1 = \{ f \in {^I\sigma} : i <\sigma \implies i < f \mod \de_\bx \}$, and  let
$\eff_2 = \{ f \in {^I\sigma} : i <\sigma \implies i < f \mod \fil(\bx) \}$. 
Then $\eff_1 \subseteq \eff_2$, and if $f_1, f_2 \in \eff_1$ and $\eff_1 \leq \eff_2 \mod \de_\bx$ then $f_1 \leq f_2 \mod \fil(\bx)$. 

Let $\delta = \lcf_\sigma(\de_\bx)$ and let $\langle f_\alpha : \alpha < \delta \rangle$ witness it, as in Definition \ref{lcf-sigma}. 
We would like to show that this same sequence witnesses the coinitiality of $\sigma$ modulo $\fil(\bx)$, i.e. that the two conditions of
Definition \ref{lcf-sigma} are satisfied for $\fil(\bx)$ in place of $\de_\bx$. First, since 
$\de_\bx \subseteq \fil(\bx)$, $\alpha < \beta < \delta \implies f_\beta <_{\fil(\bx)} f_\alpha$. 
Second, for any $g \in \eff_2$, by Claim \ref{p29a}(2) there is some $f \in \eff_1$ such that $g = f \mod \fil(\bx)$,
and for some $\alpha < \delta$, $f_\alpha < f \mod \de_\bx$. Hence $f_\alpha \leq f=g \mod \fil(\bx)$. The sequence
$\langle f_\alpha/\fil(\bx) : \alpha < \delta \rangle$ cannot be eventually constant, as then $\lcf_\sigma(\fil(\bx) = 1$,
so we are done.
\end{proof}

\begin{defn} \emph{(Goodness for a boolean algebra)} \label{ba-goodness}
We say that the subset $X \subseteq \ba$, usually a filter or ultrafilter, 
is $\mu^+$-good if every monotonic function from $\fss(\mu) \rightarrow \ba$ has a multiplicative
refinement, i.e. for every sequence $\langle a_u : u \in [\mu]^{<\aleph_0} \rangle$ of members of $X$ with 
$u \subseteq v \implies a_v \subseteq a_u$, there is a refining sequence 
$\langle b_\alpha : \alpha \in \mu \rangle$ of members of $X$ such that for each $u \in [\mu]^{<\aleph_0}$, 
$\ba \models $ `` $(\bigcap_{\alpha \in u} b_\alpha)  \leq a_u $''. 
\end{defn}

\begin{claim} \label{p32}
Suppose $\bx \in \mk$ and $E$ is not $\rho^+$-good considered as a subset of $\ba$ \lp e.g. $\ba = \mcp(\kappa)$ and $\rho = \kappa^+$ so 
$\rho^+ = \kappa^{++}$\rp. Then $\fil(\bx)$ is not $\rho^+$-good.
\end{claim}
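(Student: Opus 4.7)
The plan is to contrapose: starting from a monotonic $g\colon \fss(\rho)\to E$ with no multiplicative refinement inside $E$, I will lift it through the surjective homomorphism $h=\mh_\bx$ to a monotonic $f\colon \fss(\rho)\to \fil(\bx)$, and then show that a multiplicative refinement of $f$ would push forward under $h$ to a multiplicative refinement of $g$ inside $E$.

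First, I would use that $h\colon \mcp(I)\to \ba$ is a surjective Boolean algebra homomorphism satisfying $\fil(\bx)=\{A\subseteq I:h(A)\in E\}$ (\ref{p21}). For each $u\in\fss(\rho)$, choose $A_u\subseteq I$ with $h(A_u)=g(u)$; since $g(u)\in E$ we have $A_u\in\fil(\bx)$. To achieve monotonicity I set
\[
f(u) \;=\; \bigcap_{v\subseteq u} A_v,
\]
a finite intersection (because $u$ is finite), so $f(u)\in\fil(\bx)$. Clearly $u\subseteq u'$ implies $f(u')\subseteq f(u)$, and using that $h$ is a homomorphism together with the monotonicity of $g$,
\[
h(f(u)) \;=\; \bigcap_{v\subseteq u} h(A_v) \;=\; \bigcap_{v\subseteq u} g(v) \;=\; g(u).
\]

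Now assume for contradiction that $\fil(\bx)$ is $\rho^+$-good, so there is a multiplicative refinement $f'\colon \fss(\rho)\to \fil(\bx)$ of $f$. Define $g'\colon \fss(\rho)\to \ba$ by $g'(u):=h(f'(u))$. Since $f'(u)\in\fil(\bx)$, we have $g'(u)\in E$. Moreover $g'(u)=h(f'(u))\le h(f(u))=g(u)$, so $g'$ refines $g$, and because $h$ is a homomorphism,
\[
g'(u)\cap g'(v) \;=\; h(f'(u)\cap f'(v)) \;=\; h(f'(u\cup v)) \;=\; g'(u\cup v),
\]
so $g'$ is multiplicative. But this contradicts the hypothesis that $g$ has no multiplicative refinement inside $E$, completing the proof.

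I do not anticipate a real obstacle here: once the lifting $f$ is defined correctly (the little trick is using the intersection over $v\subseteq u$ to secure both monotonicity and the identity $h\circ f=g$), the rest is a routine diagram chase through the homomorphism $h$. The only thing to keep an eye on is that the refinement $f'$ is allowed to be arbitrary in $\fil(\bx)$ (not required to come from a lift), but this does not matter because $h(f'(u))$ automatically lies in $E$ by definition of $\fil(\bx)$.
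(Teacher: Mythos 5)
Your proof is correct and matches the paper's argument: both lift the witness of non-goodness of $E$ through the homomorphism $\mh_\bx$ to a monotonic family in $\fil(\bx)$, and then push a hypothetical multiplicative refinement back down to $E$ via $\mh_\bx$ to get a contradiction. The only cosmetic difference is how monotonicity of the lift is secured — you take $f(u)=\bigcap_{v\subseteq u}A_v$, whereas the paper chooses the $A_u$ by induction on $|u|$ — but these are interchangeable and your intersection trick is arguably cleaner since it gives $\mh_\bx\circ f=g$ directly.
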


\begin{proof}
If $E$ is not $\rho^+$-good, then as in Definition \ref{ba-goodness} 
we can find a sequence $\langle a_u : u \in [\rho]^{<\aleph_0} \rangle$ of members of $E$ such that there is no 
refining sequence $\langle b_\alpha : \alpha < \rho \rangle$ of members of $E$ such that for each $ u \in [\rho]^{<\aleph_0}$,
$y_u \leq_\ba a_u$ where $y_u := \bigcap_{\alpha \in u} b_\alpha$. 
Let $A_u \subseteq I$ be such that $\mh_\bx(A_u) = a_u$. Choosing $A_u$ by induction on $|u|$, we can ensure monotonicity, i.e.
that $u \subseteq v \implies A_v \subseteq A_u$. Now if $\fil(\bx)$ were $\rho^+$-good, we could find a sequence $\langle B_\alpha : \alpha < \mu \rangle$
of members of $\fil(\bx)$ such that for each $u \in [\rho]^{<\aleph_0}$, $\bigcap_{\alpha \in u} B_\alpha \subseteq A_u$. But then 
letting $b_\alpha = \mh(B_\alpha)$ for $\alpha <\mu$ gives a refining sequence which 
contradicts the choice of $\langle a_u : u \in [\rho]^{<\aleph_0} \rangle$.
\end{proof}

\begin{theorem} \label{flex-not-good-b}
Assume $\kappa$ is a measurable cardinal, $\kappa < \lambda$.  There is $\de$ such that:
\begin{enumerate}
\item $\de$ is a regular ultrafilter on $\lambda$
\item $\de$ is flexible, i.e. $\lambda$-flexible
\item $\de$ is not $\kappa^{++}$-good
\end{enumerate}
\end{theorem}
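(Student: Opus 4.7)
My plan is to assemble the framework already developed in this section. Since $\kappa$ is measurable, fix a uniform $\kappa$-complete ultrafilter $E$ on $\kappa$, regarded as an ultrafilter on $\ba := \mcp(\kappa)$. Apply Claim \ref{bx-exists} with $\lambda$, $\ba$, and $E$ to obtain $\bx \in \mk$ with $I_\bx$ of cardinality $\lambda$, $\ba_\bx = \mcp(\kappa)$, $E_\bx = E$, $\theta_\bx = \aleph_1$, and crucially $\de_\bx$ both $\lambda$-regular and $\lambda^+$-good (with $\kappa \in \Theta_\bx$). Define $\de := \fil(\bx)$, the ultrafilter on $I$ induced via the homomorphism $\mh_\bx$.

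Item (1) is immediate from Claim \ref{p29a}(1), which gives that $\fil(\bx)$ is a regular ultrafilter on $I$. For (2), the $\lambda^+$-goodness of $\de_\bx$ implies $\lambda$-flexibility of $\de_\bx$ by Corollary \ref{good-flexible}(2), and Claim \ref{p29b}(1) — whose hypothesis $\aleph_0 < \theta_\bx$ is satisfied — transfers $\lambda$-flexibility from $\de_\bx$ to $\fil(\bx) = \de$. Both of these verifications are routine invocations of the preceding machinery.

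For (3), Claim \ref{p32} reduces the task to showing that $E$, considered as an ultrafilter on $\ba = \mcp(\kappa)$, is not $\kappa^{++}$-good (the ``e.g.'' clause of Claim \ref{p32} applied with $\rho = \kappa^+$). This is the substantive technical step and the source of the improved bound over \cite{MiSh:996} Theorem 6.4 (which yielded only ``not $(2^\kappa)^+$-good''): working directly with the Boolean algebra $\mcp(\kappa)$ via $\mh$ lets the failure of goodness of the $\kappa$-complete ultrafilter transfer to $\de$ at the exact same cardinal, rather than at $2^\kappa$. One exhibits a monotonic $f : \fss(\kappa^+) \to E$ with no multiplicative refinement, exploiting that $E$ lives on a set of cardinality $\kappa$ and is uniform but not $\kappa^+$-complete, so that any candidate refining sequence $\langle Z_\alpha : \alpha < \kappa^+ \rangle \subseteq E$ would be forced to satisfy too many pairwise coherence conditions at once — conditions a $\kappa$-complete, non-$\kappa^+$-complete uniform ultrafilter on $\kappa$ cannot all simultaneously honor.

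The main obstacle is precisely this last step: constructing the witness $f$ to failure of $\kappa^{++}$-goodness of $E$ and verifying non-refinability. Everything else is the formal assembly of Claims \ref{bx-exists}, \ref{p29a}, \ref{p29b}, and \ref{p32}. The contrast with Claim \ref{f2}, noted in Remark \ref{r:compare}, is worth emphasizing in the writeup: here non-goodness is engineered through a quotient-by-homomorphism mechanism combined with completeness of $E$, rather than through a bottleneck in the independent families themselves, which is what makes it possible to preserve full $\lambda$-flexibility while losing only $\kappa^{++}$-goodness.
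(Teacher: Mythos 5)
Your assembly is exactly the paper's: obtain $\bx$ from Claim \ref{bx-exists} with $\ba = \mcp(\kappa)$ and $E$ a uniform $\kappa$-complete ultrafilter, set $\de = \fil(\bx)$, and read off the three properties from Claims \ref{p29a}(1), \ref{p29b}(1), and \ref{p32}. So the route is the same; the one place where your write-up adds commentary beyond the paper is the last paragraph, and that is also where a small correction is in order.

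You frame ``$E$ is not $\kappa^{++}$-good as a subset of $\mcp(\kappa)$'' as the substantive remaining step, and your heuristic points at $E$ being $\kappa$-complete but not $\kappa^+$-complete. That is not really where the leverage lives; $\kappa$-completeness plays no role, and the fact holds for \emph{any} uniform ultrafilter on a regular $\kappa$. The actual obstruction is a pigeonhole count forced by the ceiling $|\kappa| = \kappa$. Concretely: fix a family $\langle g_\alpha : \alpha < \kappa^+ \rangle$ of functions $\kappa \to \kappa$ that are pairwise eventually different (standard for regular $\kappa$), and set $f(u) = \{ \xi < \kappa : g_\alpha(\xi) \neq g_\beta(\xi) \ \mbox{for all distinct}\ \alpha, \beta \in u \}$ for $u \in \fss(\kappa^+)$. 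Each $f(u)$ is co-bounded, hence in $E$ by uniformity, and $f$ is monotonic. If $\langle B_\alpha : \alpha < \kappa^+ \rangle$ were a multiplicative refinement with $B_\alpha \subseteq f(\{\alpha\})$, then for each $\xi < \kappa$ the set $I_\xi := \{ \alpha : \xi \in B_\alpha \}$ would have the property that $\alpha \mapsto g_\alpha(\xi)$ is injective on $I_\xi$ (since $\xi \in B_\alpha \cap B_\beta \subseteq f(\{\alpha,\beta\})$), so $|I_\xi| \leq \kappa$; but each $B_\alpha$ is nonempty, so $\kappa^+ = \bigcup_{\xi < \kappa} I_\xi$, giving $\kappa^+ \leq \kappa \cdot \kappa = \kappa$, a contradiction. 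The paper treats this fact as known, which is why the ``e.g.'' in Claim \ref{p32} is only a parenthetical; you were right to flag it, but the mechanism is a counting argument, not completeness.

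One further small imprecision: Corollary \ref{good-flexible}(2) is stated for ultrafilters (its proof passes through saturation of an ultrapower), whereas $\de_\bx$ is a filter. The conclusion ``$\lambda^+$-good $\lambda$-regular filter is $\lambda$-flexible'' is still true by a direct combinatorial argument, and Claim \ref{p29b}(1) is phrased so as to presuppose it for $\de_\bx$, so nothing breaks; just cite the right thing or add the one-line direct argument rather than \ref{good-flexible}(2) verbatim.
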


\begin{proof}

Let $E$ be a uniform $\kappa$-complete ultrafilter on $\kappa$, which exists as $\kappa$ is measurable. Let $\ba = \mcp(\kappa)$.
Let $\bx$ be as in Claim \ref{bx-exists}.  
Then:
\begin{enumerate}
\item  $\fil(\bx)$ is an ultrafilter on $\lambda = I_\bx = \lambda_\bx$ as $\bx \in \mk$, by Claim \ref{p29a}.
Since $\de_\bx$ is a $\lambda^+$-good $\lambda$-regular filter on $\lambda$, we have that $\fil(\bx)$ is regular by
Claim \ref{bx-exists}(3).

\item $\fil(\bx)$ is $\lambda$-flexible by Claim \ref{p29b}.

\item $\fil(\bx)$ is not $\kappa^{++}$-good by Claim \ref{p32}.
\end{enumerate}
This completes the proof.
\end{proof}

\section{For $\kappa$ weakly compact $\de$ may have no $(\kappa, \kappa)$-cuts while $\lcf(\aleph_0, \de)$ is small} \label{s:weakly-compact}

In this section we prove, using a weakly compact cardinal $\kappa$, that it is possible to realize all $(\kappa, \kappa)$-pre-cuts while
allowing $\lcf(\aleph_0)$ to be small \emph{thus} failing to saturate any unstable theory. In some sense, we play weak compactness against
the cofinality of the construction. Throughout this section $\kappa$ is weakly compact,  
but we will mention where we use this. 

\begin{defn} \label{wcc} The cardinal $\kappa$ is weakly compact if for every $f: \kappa \times \kappa \rightarrow \{ 0, 1\}$ there is 
$\uu \subseteq \kappa$, $|\uu| = \kappa$ and $\trv \in \{ 0, 1\}$ such that for all $\epsilon < \zeta$ from $\uu$,
$f(\epsilon, \zeta) = \trv$.
\end{defn}

\begin{fact} \label{wcc-fact} \emph{(see e.g. Kanamori \cite{kanamori} Theorem 7.8 p. 76)}
If $\kappa > \aleph_0$ is weakly compact, $n < \aleph_0$ and $\rho < \kappa$, then for any $\alpha: [\kappa]^n \rightarrow \rho$
there exists $\uu \subseteq \kappa$, $|\uu| = \kappa$ such that
$\langle \alpha(\epsilon_1, \dots \epsilon_n) : \epsilon_1 < \dots < \epsilon_n ~\mbox{from $\uu$} \rangle$ is constant.
\end{fact}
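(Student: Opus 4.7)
The statement is the classical partition relation $\kappa \to (\kappa)^n_\rho$ for weakly compact $\kappa$. My plan is to derive it from Definition~\ref{wcc} by a double induction, on the arity $n$ and on the number $\rho$ of colors.

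The case $n=1$ is immediate pigeonhole, using that weak compactness implies $\kappa$ is regular: no partition of $\kappa$ into $\rho < \kappa$ pieces can have every piece of size less than $\kappa$. The base case $n=2$, $\rho = 2$, is Definition~\ref{wcc}. To upgrade to arbitrary $\rho < \kappa$ colors at arity $2$, I would first verify that any weakly compact $\kappa$ is strongly inaccessible. If $\kappa$ were singular or $2^\sigma \geq \kappa$ for some $\sigma < \kappa$, one can fix a suitable injection $\iota: \kappa \hookrightarrow {}^\sigma 2$ and define $\alpha(\epsilon_1, \epsilon_2)$ to read off, e.g., the value of $\iota(\epsilon_1)(\delta)$ at the least $\delta$ where $\iota(\epsilon_1)$ and $\iota(\epsilon_2)$ disagree; a $\kappa$-homogeneous set would then contradict injectivity of $\iota$ by a Sierpi\'nski-style analysis. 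With inaccessibility in hand, I would apply the $2$-color case iteratively to $\alpha: [\kappa]^2 \to \rho$ (one color at a time, thinning to a $\kappa$-sized subset homogeneous for ``color $=i$ versus $\ne i$''), passing limits via the equivalent tree-property characterization of weak compactness by extracting a cofinal branch through the tree of partial refinements.

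For the inductive step from $n$ to $n+1$ in the arity I would carry out the Erd\H{o}s--Rado ``end-homogeneous'' construction. Given $\alpha: [\kappa]^{n+1} \to \rho$, I choose by induction on $\xi < \kappa$ an increasing sequence $\eta_\xi \in \kappa$ together with a $\subseteq$-decreasing sequence of sets $A_\xi \subseteq \kappa$ of size $\kappa$, with $\eta_\xi \in A_\xi$, such that for each $n$-subset $s \subseteq \{\eta_\zeta : \zeta < \xi\}$ the color $\alpha(s \cup \{\eta\})$ depends only on $s$ as $\eta$ ranges over $A_\xi$; call this common value $\alpha^\ast(s)$. Applying the inductive hypothesis for arity $n$ with the same $\rho$ to $\alpha^\ast$ on $\{\eta_\xi : \xi < \kappa\}$ produces a set $\uu$ homogeneous for $\alpha^\ast$, and the same $\uu$ is then homogeneous for the original $\alpha$.

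The main obstacle is the bookkeeping at limit stages of the end-homogeneous construction: to define $A_\xi$ at a limit $\xi$ I must intersect $< \kappa$ previously chosen sets and, simultaneously, impose the end-homogeneity condition for each of the $|\xi|^n$ newly relevant $n$-subsets. Here the regularity and strong inaccessibility of $\kappa$ guarantee that the intersection still has size $\kappa$, and each successive pigeonhole-style thinning (using $\rho < \kappa$ and regularity) preserves size $\kappa$. Combining the two inductions then yields $\kappa \to (\kappa)^n_\rho$ for all finite $n$ and all $\rho < \kappa$, which is exactly the statement of Fact~\ref{wcc-fact}.
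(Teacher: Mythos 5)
The paper does not prove this fact at all; it is quoted with a citation to Kanamori, so there is no ``paper's proof'' to compare against. Your sketch follows the right general outline (inaccessibility plus Erd\H{o}s--Rado stepping-up), but it contains a genuine gap in the arity step.

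The problematic sentence is ``the regularity and strong inaccessibility of $\kappa$ guarantee that the intersection still has size $\kappa$.'' This is false: a $\subseteq$-decreasing chain $\langle A_\zeta : \zeta < \xi \rangle$ of subsets of $\kappa$ with each $|A_\zeta| = \kappa$ can have empty intersection already for $\xi = \omega$, no matter how large or inaccessible $\kappa$ is (take $A_m$ to be the ordinals whose residue mod $\omega$ is $\geq m$, say). Regularity only controls intersections when the removed pieces have size $< \kappa$, but each pigeonhole step may discard a $\kappa$-sized set. In fact the gap must be real for a structural reason: your stepping-up argument, if it only used regularity and inaccessibility, could be initialized at $n = 1$ (where pigeonhole needs nothing beyond regularity) and would then prove $\kappa \to (\kappa)^n_\rho$ for every inaccessible $\kappa$, which is known to fail (there are inaccessible cardinals that are not weakly compact). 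So weak compactness has to re-enter precisely at those limit stages, and ``intersect the previously chosen sets'' is exactly the step that cannot be carried out naively. Your $\rho$-color upgrade at arity $2$ (``thinning one color at a time'') has the same nested-intersection problem and the phrase ``tree of partial refinements'' is too vague to fill it, since the obvious tree of all thinnings has levels of size $2^\kappa$, not $< \kappa$, so the tree property does not apply to it.

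The standard repair is to bring weak compactness in through the tree property (or the filter/embedding characterization) rather than through nested intersections. One clean version for $\kappa \to (\kappa)^2_\rho$: for $\gamma < \kappa$ let $f_\gamma \in {}^\gamma\rho$ be $f_\gamma(\beta) = \alpha(\{\beta,\gamma\})$, and let $T$ consist of those $s \in {}^{<\kappa}\rho$ such that $\{\gamma : f_\gamma \upharpoonright \operatorname{dom}(s) = s\}$ has size $\kappa$. Inaccessibility bounds each level of $T$ by $\rho^{|\delta|} < \kappa$, regularity shows each level is nonempty, so $T$ is a $\kappa$-tree; the tree property (which you must still derive from Definition~\ref{wcc}, a nontrivial but standard step) gives a cofinal branch $b$, along which one recursively picks $\gamma_0 < \gamma_1 < \cdots$ with $f_{\gamma_\xi}$ agreeing with $b$ at all earlier $\gamma_\zeta$; this gives end-homogeneity with no limit-stage intersection ever taken, and a final pigeonhole on $b(\gamma_\zeta) < \rho$ finishes. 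The same device, applied one arity at a time, then gives $\kappa \to (\kappa)^n_\rho$. So the broad strategy you chose is workable, but the mechanism by which weak compactness enters the arity step needs to be the tree property (or an equivalent), not a cardinality claim about intersections that does not hold.
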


Recall that $\de^+ = \{ X \subseteq I : X \neq \emptyset \mod \de \}$.

We will use an existence result from our paper \cite{MiSh:999}. 

\begin{rmk} For the purposes of this paper, the reader may simply
take $\de$ in Theorem \ref{excellent} to be $\lambda^+$-good; then Fact \ref{e:fact} holds by the Appendix to \cite{MiSh:999}. 
We will only use this quoted consequence of the definition, Fact \ref{e:fact}, summarized in Remark \ref{upgrade}.
\end{rmk}

\begin{thm-lit} \emph{(Excellent filters, Malliaris and Shelah \cite{MiSh:999})} \label{excellent}
Let $\lambda \geq \mu \geq \aleph_0$. Then there exists a $(\lambda, \mu)$-good triple $(I, \de, \gee)$ where
$|\gee| = 2^\lambda$, $\gee \subseteq {^I \mu}$, and
$\de$ is a regular, $\lambda^+$-excellent filter $\de$ on $\lambda$ $($thus also $\lambda^+$-good$)$.
\end{thm-lit}

\begin{fact} \emph{(\cite{MiSh:999} Claim 4.9)} \label{e:fact}
Let $\de$ be a regular, $\lambda^+$-excellent filter on $\lambda$. Then for any sequence
$\overline{A} = \langle A_u : u \in [\mu]^{<\aleph_0} \rangle \subseteq \de^+$ which is multiplicative $\mod \de$, meaning that
\[ \left( u, v \in [\mu]^{<\aleph_0} \right) \implies \left(  A_u \cap A_v = A_{u \cup v} \mod \de \right) \]
there is a sequence 
$\overline{A}^\prime = \langle A^\prime_u : u \in [\mu]^{<\aleph_0} \rangle \subseteq \de^+$ such that
\begin{enumerate}
\item $u \in [\mu]^{<\aleph_0} \implies A^\prime_u \subseteq A_u$ 
\item $u \in [\mu]^{<\aleph_0} \implies A^\prime_u = A_u \mod \de$
\item $\overline{A}^\prime$ is multiplicative, i.e. multiplicative modulo the trivial filter $\{ \lambda \}$.
\end{enumerate}
\end{fact}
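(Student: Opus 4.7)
The plan is to feed $\lambda^+$-excellence of $\de$ an auxiliary monotonic function $f : [\mu]^{<\aleph_0} \to \de$ that records every multiplicativity failure of $\overline A$ relevant at each level, and then to cut each $A_u$ down by the on-the-nose multiplicative refinement of $f$ produced by excellence. The point is that excellence (as opposed to ordinary $\lambda^+$-goodness) will deliver a refinement $f^\ast$ that is multiplicative in the literal set-theoretic sense and still equal to $f$ modulo $\de$; this is exactly the extra strength needed to repair $\overline A$.

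Concretely, the hypothesis that $\overline A$ is multiplicative mod $\de$ says that for each pair $u,v \in [\mu]^{<\aleph_0}$ the ``error set''
\[ E_{u,v} \ := \ I \setminus \bigl[ (A_u \cap A_v) \mathbin{\triangle} A_{u \cup v} \bigr] \]
belongs to $\de$. For $w \in [\mu]^{<\aleph_0}$ define
\[ f(w) \ := \ \bigcap \bigl\{ E_{u,v} : u,v \subseteq w,\ u \cup v = w \bigr\}, \]
a finite intersection of members of $\de$, hence in $\de$. Replacing $f(w)$ by $\bigcap_{w^\prime \subseteq w} f(w^\prime)$ if needed (still a finite intersection), we may arrange $f$ to be monotonic. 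Applying $\lambda^+$-excellence of $\de$ to $f$, obtain $f^\ast : [\mu]^{<\aleph_0} \to \de$ that is genuinely multiplicative, with $f^\ast(w) \subseteq f(w)$ and $f^\ast(w) = f(w) \mod \de$ for every $w$. Now set $A^\prime_u := A_u \cap f^\ast(u)$.

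Verification of (1)--(3) is then routine. Item (1) is built in. For (2), $f(u) \in \de$ and $f^\ast(u) = f(u) \mod \de$ give $f^\ast(u) \in \de$, so $A^\prime_u = A_u \cap f^\ast(u) = A_u \mod \de$. For (3), multiplicativity of $f^\ast$ gives
\[ A^\prime_u \cap A^\prime_v \ = \ (A_u \cap A_v) \cap f^\ast(u) \cap f^\ast(v) \ = \ (A_u \cap A_v) \cap f^\ast(u \cup v), \]
and $f^\ast(u \cup v) \subseteq f(u \cup v) \subseteq E_{u,v}$ forces $(A_u \cap A_v) \mathbin{\triangle} A_{u \cup v}$ to be disjoint from $f^\ast(u \cup v)$, whence $A^\prime_u \cap A^\prime_v = A_{u \cup v} \cap f^\ast(u \cup v) = A^\prime_{u \cup v}$.

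The main obstacle is the appeal to excellence itself: the whole argument hinges on producing $f^\ast$ that is multiplicative in the strict sense, not merely modulo $\de$, and this is precisely the strengthening over $\lambda^+$-goodness that the excellent filters of \cite{MiSh:999} are designed to provide. Once that black box is invoked, the argument above reduces to the observation that a single monotonic function into $\de$, assembled from the finitely many error sets $E_{u,v}$ attached to each finite $w$, carries enough information to repair $\overline A$ into a genuinely multiplicative family without moving any $A_u$ modulo $\de$.
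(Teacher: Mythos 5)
Your argument is correct and gives a clean, self-contained derivation of a fact the paper itself only cites from \cite{MiSh:999} (see the remark immediately preceding it). Encoding the multiplicativity failures of $\overline{A}$ in the error sets $E_{u,v}\in\de$, assembling them into a single monotonic $f:[\mu]^{<\aleph_0}\to\de$, taking a multiplicative refinement $f^\ast$, and setting $A'_u:=A_u\cap f^\ast(u)$ is exactly the right move, and your verification of (1)--(3) is sound (including the point, easy to overlook, that $A'_u\in\de^+$ because $A_u\in\de^+$ and $f^\ast(u)\in\de$).

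One point in your closing paragraph should be corrected. By the paper's Definition \ref{good-filters}, a $\mu^+$-good filter already supplies, for any monotonic $f:\fss(\mu)\to\de$, a refinement $f'$ with $f'(u)\subseteq f(u)$ and $f'(u)\cap f'(v)=f'(u\cup v)$ holding literally, not merely modulo $\de$, and with $f'(u)\in\de$ throughout. Since your auxiliary $f$ maps into $\de$, ordinary $\lambda^+$-goodness --- which $\lambda^+$-excellence implies, as the parenthetical in Theorem \ref{excellent} notes --- is all your argument actually uses. So the ``strict versus modulo $\de$'' dichotomy you describe is not the content of excellence over goodness; in fact your proof establishes the Fact under the weaker hypothesis that $\de$ is $\lambda^+$-good, which agrees with the paper's remark that for its purposes one may simply take $\de$ to be $\lambda^+$-good. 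Whatever additional work the notion of excellence does in \cite{MiSh:999}, your argument neither invokes nor needs it.
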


\begin{rmk} \label{upgrade}
That is, when $\de$ is excellent we can upgrade ``multiplicative modulo $\de$'' to ``multiplicative.''
\end{rmk}

\begin{defn} \emph{(On cuts)}
\begin{enumerate}
\item Let $\de$ be a filter on $I$, $M$ a model, $\gee$ a family of independent functions and suppose
$(I, \de, \gee)$ is a pre-good triple.
Let $(\overline{a}_1, \overline{a}_2) =
(\langle a^1_\epsilon : \epsilon < \kappa_1 \rangle, \langle a^1_\epsilon : \epsilon < \kappa_2 \rangle)$ be
a pair of sequences of elements of $M^I$. Say that $(\overline{a}_1, \overline{a}_2)$ is a pre-cut $\mod D$ whenever:
\begin{enumerate}
\item $\langle a^1_\epsilon : \epsilon < \kappa_1 \rangle$ is increasing $\mod \de$
\item $\langle a^2_\zeta : \zeta < \kappa_2 \rangle$ is decreasing $\mod \de$
\item and $a^1_\epsilon < a^2_\zeta \mod \de$ for each $\epsilon < \kappa_1, \zeta < \kappa_2$
\end{enumerate}

\item Say that, moreover, $(\overline{a}_1, \overline{a}_2)$ is a pre-cut supported by $\gee^\prime \subseteq \gee$ $\mod D$
when it is a pre-cut $\mod \de$ and for some $\uu \subseteq \kappa$ unbounded in $\kappa$, and any $\epsilon < \zeta$ from $\uu$, the set
\[ A_{\epsilon, \zeta} = \{ s \in I : \exists x (a^1_\epsilon < a^1_\zeta < x < a^2_\zeta < a^2_\epsilon) \}/\de \]
is supported by $\gee^\prime$. That is, we can find a partition $\langle X_i : i < i(*) \rangle$ of $I$ whose members are elements of $\fin_s(\gee^\prime)$ 
and such that for each $i < i(*)$, either $X_i \subseteq A_{\epsilon, \zeta} \mod \de$ or $X_i \cap A_{\epsilon, \zeta} = \emptyset \mod \de$.

\item An ultrafilter $\de_*$ on $I$ \emph{has no $(\kappa_1, \kappa_2)$-cuts} when for some model $M$ of the theory of linear order
$M^I/\de$ does, where this means that for any $(\overline{a}_1, \overline{a}_2)$ which is a $(\kappa_1, \kappa_2)$-pre-cut $\mod \de_*$,
then $N$ realizes the type $\{ a^1_\epsilon < x < a^2_\zeta : \epsilon < \kappa_1, \zeta < \kappa_2 \}$. 
\end{enumerate}
\end{defn}

\begin{obs} \label{basis-obs}
Let $(I, \de, \gee)$ be a $(\lambda, \aleph_0)$-good triple and let $\langle B_{\epsilon, \zeta} : \epsilon < \zeta < \kappa \rangle$ be
a sequence of elements of $\de^+$, i.e. $\de$-nonzero subsets of $I$. Then for each $\epsilon < \zeta < \kappa$, there exist:
\begin{enumerate}
\item a maximal antichain $\langle A_{h_{\epsilon, \zeta, n}} /\de : n < \omega \rangle$
of $\mcp(I)/\de$ such that:
\begin{itemize}
\item each $h_{\epsilon, \zeta, n} \in \fin(\gee)$
\item $n < m < \omega \implies A_{h_{\epsilon, \zeta, n}} \cap A_{h_{\epsilon, \zeta, m}} = \emptyset \mod \de$ (hence is really empty, by the assumption
on the independence of $\gee$)
\end{itemize}
\item and truth values $\langle \trv_{\epsilon, \zeta, n} : n < \omega \rangle$ such that for each $n<\omega$
\[ A_{h_{\epsilon, \zeta, n}} \subseteq \left( B_{\epsilon, \zeta} \right)^{\trv_{\epsilon, \zeta, n}} \mod \de \]
\end{enumerate}
\noindent In other words, $B_{\epsilon, \zeta} = \bigcup \{ A_{h_{\epsilon, \zeta, n}} : n < \omega, \trv_{\epsilon, \zeta, n} = 1 \} \mod \de$. 
\end{obs}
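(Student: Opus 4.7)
The plan is to fix $\epsilon < \zeta < \kappa$ and extract the required antichain and truth values for $B := B_{\epsilon,\zeta}$ directly from Facts \ref{good-dense} and \ref{mcc}; the construction is then done independently across different pairs. The role of the hypothesis ``$(\lambda, \aleph_0)$-good'' (as opposed to, say, $(\lambda, \mu)$-good for larger $\mu$) is to force the antichain to be countable, via Fact \ref{mcc}.

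First, I would consider the family $\mathcal{S}$ consisting of those $\mathcal{A} \subseteq \fin_s(\gee)$ whose images in $\mcp(I)/\de$ form an antichain of nonzero elements and such that every $A_h \in \mathcal{A}$ satisfies either $A_h \subseteq B \mod \de$ or $A_h \cap B = \emptyset \mod \de$. Ordering $\mathcal{S}$ by inclusion, Zorn's lemma yields a maximal $\mathcal{A}^*$. I would then check that $\mathcal{A}^*$ is in fact a maximal antichain of $\mcp(I)/\de$: if $Y := I \setminus \bigcup \mathcal{A}^*$ were nonzero modulo $\de$, then at least one of $Y \cap B$, $Y \cap (I \setminus B)$ is nonzero modulo $\de$, and by density of $\fin_s(\gee)$ in $\mcp(I)/\de$ (Fact \ref{good-dense}) there is $A_h \in \fin_s(\gee)$ lying inside that piece modulo $\de$; adjoining $A_h$ to $\mathcal{A}^*$ contradicts maximality.

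Second, since $\gee \subseteq {^I\aleph_0}$, Fact \ref{mcc} gives $CC(B(\de)) = \aleph_1$, so $|\mathcal{A}^*| \leq \aleph_0$. I would enumerate $\mathcal{A}^* = \langle A_{h_{\epsilon,\zeta,n}} : n < \omega \rangle$, padding with repetitions of a fixed member if $\mathcal{A}^*$ is finite, and set $\trv_{\epsilon,\zeta,n} = 1$ iff $A_{h_{\epsilon,\zeta,n}} \subseteq B \mod \de$. Maximality of $\mathcal{A}^*$ then yields the displayed identity $B_{\epsilon,\zeta} = \bigcup\{A_{h_{\epsilon,\zeta,n}} : n<\omega,\ \trv_{\epsilon,\zeta,n}=1\} \mod \de$.

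The only small subtlety is the parenthetical ``hence is really empty, by the assumption on the independence of $\gee$.'' For this, suppose $A_{h_{\epsilon,\zeta,n}} \cap A_{h_{\epsilon,\zeta,m}} = \emptyset \mod \de$ but not literally empty. If $h_{\epsilon,\zeta,n}$ and $h_{\epsilon,\zeta,m}$ were compatible as partial functions on $\gee$, then $h := h_{\epsilon,\zeta,n} \cup h_{\epsilon,\zeta,m} \in \fin(\gee)$ would satisfy $A_h \neq \emptyset \mod \de$ by the definition of $(\lambda, \aleph_0)$-good triple, contradicting the disjointness modulo $\de$; hence the two elements of $\fin(\gee)$ are incompatible, so $A_{h_{\epsilon,\zeta,n}} \cap A_{h_{\epsilon,\zeta,m}} = \emptyset$ literally. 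I do not foresee a genuine obstacle here: the observation is a bookkeeping step that packages the standard consequences of independent families per index pair, to be invoked in the subsequent construction of the indiscernible sequences of antichains described in Section \ref{s:description}.
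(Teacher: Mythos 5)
Your proof is correct and uses exactly the two ingredients the paper cites, namely density of $\fin_s(\gee)$ in $\mcp(I)/\de$ (Fact \ref{good-dense}) and the $\aleph_1$-c.c.\ from Fact \ref{mcc}; the paper compresses this into ``choose the sets and their exponents by induction on $n<\omega$,'' while your Zorn-plus-ccc packaging makes the maximality step explicit. The aside on literal disjointness via incompatibility of $h_{\epsilon,\zeta,n}$ and $h_{\epsilon,\zeta,m}$ is also the intended reading of the parenthetical.
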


\begin{proof}
Since $(I, \de, \gee)$ is a good triple, the elements of $\fin_s(\gee)$ are dense in $\mcp(I)/\de$. 
Since $(I, \de, \gee)$ is a $(\lambda, \aleph_0)$-good triple, the boolean algebra $\mcp(I)/\de$ has the $\aleph_1$-c.c.
Using these two facts, for any $\de$-nonzero $B_{\epsilon, \zeta}$, we may choose the sets and their exponents by induction on $n<\omega$. 
\end{proof}

Observation \ref{basis-obs} can be thought of as giving a pattern on which the set $B_{\epsilon, \zeta}$ is based, so we now 
give a definition of when two patterns are the same; we will then apply weak compactness to show we can extract a large set $\uu$ of $\kappa$
so that any two $\epsilon < \zeta$ from $U$ have the same associated pattern. 

\begin{defn} \emph{(Equivalent patterns)}
Let $(I, \de, \gee)$ be a $(\lambda, \aleph_0)$-good triple and let $\langle B_{\epsilon, \zeta} : \epsilon < \zeta < \kappa \rangle$ be
a sequence of elements of $\de^+$. For each $\epsilon < \zeta < \kappa$:
\begin{itemize}
\item let $\langle A_{h_{\epsilon, \zeta, n}} /\de : n < \omega \rangle$, $\langle \trv_{\epsilon, \zeta, n} : n < \omega \rangle$
be as in Observation \ref{basis-obs} 
\item let $\langle \gamma(i,\epsilon, \zeta) : i < i(\epsilon, \zeta) \rangle$ list 
$\bigcup \{ \dom(h_{\epsilon, \zeta, n} : n < \omega \rangle \}$ in increasing order
\end{itemize}

We define an equivalence relation $E = E(\langle B_{\epsilon, \zeta} : \epsilon < \zeta < \kappa \rangle)$ on 
$\{ (\epsilon, \zeta) : \epsilon < \zeta < \kappa \}$ by: $(\epsilon_1, \zeta_1) E (\epsilon_2, \zeta_2)$ iff:

\begin{itemize}
\item $i(\epsilon_1, \zeta_1) = i(\epsilon_2, \zeta_2)$
\item $\trv_{\epsilon_1, \zeta_1, n} = \trv_{\epsilon_2, \zeta_2, n}$ for each $n < \omega$
\item $\gamma(i, \epsilon_1, \zeta_1) \in \dom(h_{\epsilon_1, \zeta_1, n})$ iff
$\gamma(i, \epsilon_2, \zeta_2) \in \dom(h_{\epsilon_2, \zeta_2, n})$
\item if $\gamma(i, \epsilon_1, \zeta_1) \in \dom(h_{\epsilon_1, \zeta_1, n})$ then
$h_{\epsilon_1, \zeta_1, n}(\gamma(i, \epsilon_1, \zeta_1)) = h_{\epsilon_2, \zeta_2, n}(\gamma(i, \epsilon_2, \zeta_2))$
\end{itemize}
\end{defn}

\begin{rmk} \label{eq-size}
The equivalence relation just defined has at most $2^{\aleph_0}$ classes, since $i(\epsilon, \zeta) \leq \aleph_0$, 
the sequence $\overline{t}$ is countable with each member $\{0,1\}$-valued, and the sequence $\overline{h}$ is countable
with each member a function whose domain and range are each a finite set of natural numbers. 
\end{rmk}

\begin{claim} \label{claim-patterns}
Assume $\kappa$ is weakly compact.
Let $(I, \de, \gee)$ be a $(\lambda, \aleph_0)$-good triple and let $\langle B_{\epsilon, \zeta} : \epsilon < \zeta < \kappa \rangle$ be
a sequence of elements of $\de^+$. Then there is $\uu \subseteq \kappa$, $| \uu | = \kappa$ such that:

\begin{enumerate}
\item  all pairs from 
$\{ (\epsilon, \zeta) : \epsilon < \zeta < \kappa ~\mbox{are from $\uu$} \}$ are $E$-equivalent, and moreover
\item for any $i < j < i(\epsilon, \zeta)$, and any $\epsilon_1 < \zeta_1, \epsilon_2 < \zeta_2$ from $\uu$,
the truth or falsity of ``$\gamma(i, \epsilon_1, \zeta_1) = \gamma(j, \epsilon_2, \zeta_2)$'' depends only on the order type of
$\{ \epsilon_1, \epsilon_2, \zeta_1, \zeta_2 \}$. 
\end{enumerate}

Given $\uu$, we may thus speak of $i$, $\dom(h_{n})$, and $h_{n}(\gamma(j))$ as these depend only on the equivalence class.
Note that the actual identities of the functions $\{ \gamma(j, \epsilon, \zeta) : j < i\}$ do depend on $\epsilon, \zeta$, though by 
(2) their pattern of incidence does not.
\end{claim}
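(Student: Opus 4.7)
The plan is to apply Fact \ref{wcc-fact} three times, at arities $n = 2, 3, 4$, each with a coloring whose range has cardinality at most $2^{\aleph_0}$, and then intersect the resulting homogeneous sets. The crucial background observation is that since $\kappa$ is weakly compact it is strongly inaccessible, so $2^{\aleph_0} < \kappa$, and Fact \ref{wcc-fact} applies with $\rho = 2^{\aleph_0}$ to yield homogeneous sets of full size $\kappa$ at each step.

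First, to obtain (1), I define $c_2 \colon [\kappa]^2 \to C$ by letting $c_2(\{\epsilon, \zeta\})$ (for $\epsilon < \zeta$) record the $E$-class of $(\epsilon, \zeta)$. By Remark \ref{eq-size}, $|C| \leq 2^{\aleph_0}$, so Fact \ref{wcc-fact} yields $\uu_1 \subseteq \kappa$ with $|\uu_1| = \kappa$ on which $c_2$ is constant. All pairs from $\uu_1$ then share the common value $i(*) := i(\epsilon, \zeta) \leq \aleph_0$, the common sequence $\langle \trv_n : n < \omega \rangle$, and the common incidence pattern of the $h_{\epsilon, \zeta, n}$. I then fix once and for all an enumeration $\langle (i_m, j_m) : m < \omega \rangle$ of the countable set of pairs $(i,j)$ with $i < j < i(*)$, which will make the later colorings well-defined.

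For (2), I further refine $\uu_1$ by handling separately the possible order types of $\{\epsilon_1, \zeta_1, \epsilon_2, \zeta_2\}$, which has at most four elements. For 4-element subsets $\{a_1 < a_2 < a_3 < a_4\} \subseteq \uu_1$, only finitely many (at most six) assignments of $(\epsilon_1 < \zeta_1,\ \epsilon_2 < \zeta_2)$ use all four values. I define $c_4 \colon [\uu_1]^4 \to \{0,1\}^\omega$ recording, for each such assignment and each $m < \omega$, the truth value of $\gamma(i_m, \epsilon_1, \zeta_1) = \gamma(j_m, \epsilon_2, \zeta_2)$; the range has size at most $2^{\aleph_0}$. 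Fact \ref{wcc-fact} provides $\uu_2 \subseteq \uu_1$, $|\uu_2| = \kappa$, homogeneous for $c_4$. A completely analogous coloring $c_3$ on $[\uu_2]^3$ captures the 3-distinct-value configurations (namely $\epsilon_1 = \epsilon_2$ with $\zeta_1 \neq \zeta_2$; $\zeta_1 = \zeta_2$ with $\epsilon_1 \neq \epsilon_2$; and $\zeta_1 = \epsilon_2$ or $\epsilon_1 = \zeta_2$), and a final application of Fact \ref{wcc-fact} gives $\uu \subseteq \uu_2$ of size $\kappa$. The remaining case $\{\epsilon_1, \zeta_1\} = \{\epsilon_2, \zeta_2\}$ needs no refinement: since the $\gamma(i, \epsilon, \zeta)$ are listed in strictly increasing order, $\gamma(i, \epsilon, \zeta) = \gamma(j, \epsilon, \zeta)$ reduces to $i = j$, hence depends only on $(i,j)$.

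The only real obstacle is the bookkeeping of the finitely many positional configurations of $(\epsilon_1, \zeta_1, \epsilon_2, \zeta_2)$ at each arity; the invocations of weak compactness themselves are routine because each coloring's range is bounded by $2^{\aleph_0} < \kappa$. The resulting $\uu$ simultaneously witnesses (1) and (2), and the promised invariants $i(*)$, $\dom(h_n)$, and $h_n(\gamma(j))$ are unambiguously defined on $\uu$, as required.
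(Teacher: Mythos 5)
Your proof is correct and follows essentially the same route as the paper: fix an enumeration of pairs $(i,j)$ after stabilizing the $E$-class by a coloring of $[\kappa]^2$, then apply Fact \ref{wcc-fact} again (using $2^{\aleph_0}<\kappa$ by inaccessibility) to stabilize the incidence pattern of the $\gamma$'s. The only cosmetic difference is that you split the second stage into separate arity-$4$ and arity-$3$ colorings (plus the trivial $2$-value case via strict monotonicity of $i\mapsto\gamma(i,\epsilon,\zeta)$), whereas the paper uses a single coloring of $[\uu_0]^4$ which, if it records all comparisons among $\epsilon$'s and $\zeta$'s drawn (with repetition) from the $4$-tuple, already subsumes the $3$-value configurations; either bookkeeping is fine.
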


\begin{proof}
For the first clause, by Remark \ref{eq-size}, enumerate the $2^{\aleph_0}$-many possible $E$-equivalence classes
and define $f:[\kappa]^2 \rightarrow 2^{\aleph_0}$ by $(\epsilon, \zeta) \mapsto i$ when $(\epsilon, \zeta)$ is in the $i$th class.
Note that as $\kappa$ is weakly compact, therefore inaccessible, the range of this function is less than $\kappa$.
Thus by Fact \ref{wcc-fact}, we have a homogeneous subset $\uu_0 \subseteq \kappa$, $|\uu| = \kappa$. For the second clause, define
$f: [\uu_0]^4 \rightarrow {^{i \times i}\{0,1\}}$ and let $\uu \subseteq \uu_0$, $|\uu| = \kappa$ be a homogeneous subset.  
\end{proof}

The final use of weak compactness will be in the following observation, which will allow us to build the ultrafilter so that the 
cofinality of the construction is small while still addressing all $(\kappa, \kappa)$-cuts. 

\begin{obs} \label{wcc-lcf}
Recall that $\aleph_0 < \theta = \cf(\theta) < \kappa \leq \lambda$ and $\kappa$ is weakly compact.
Let $\delta$ be an ordinal with cofinality $\theta$. Suppose $\de_*$ is an ultrafilter on $I$, $|I| = \lambda$ and
$\de_*$ is constructed as the union of a continuous increasing chain of filters $\langle \de_\eta : \eta < \delta \rangle$,
for $\delta$ an ordinal with cofinality $\theta$. 
If $(\overline{a}_1, \overline{a}_2)$ is a $(\kappa, \kappa)$-cut for $\de_*$, then there is some $\rho < \delta$ such that
$(\overline{a}_1, \overline{a}_2)$ is already a $(\kappa, \kappa)$-cut for $\de_\rho$.

Moreover, if the chain begins with $\de_0, \gee$ where $(I, \de_0, \gee)$ is a $(\lambda, \aleph_0)$-good triple, then if we write 
$\gee = \bigcup \{ \gee_\alpha : \alpha < \delta \}$ as a union of increasing sets, then we may additionally conclude that
there is $\rho < \delta$ such that $(\overline{a}_1, \overline{a}_2)$ is a $(\kappa, \kappa)$-cut supported by $\gee_\rho \mod \de_\rho$.
[More precisely, for some $\uu \in [\kappa]^{\kappa}$, 
$(\overline{a}_1\rstr_\uu, \overline{a}_2\rstr_\uu)$ is a $(\kappa, \kappa)$-cut supported by $\gee_\rho \mod \de_\rho$.]
\end{obs}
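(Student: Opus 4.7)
The strategy is to translate the assertion that $(\overline{a}_1, \overline{a}_2)$ is a $(\kappa,\kappa)$-pre-cut of $\de_*$ into a family of set-membership statements indexed by $[\kappa]^2$, and then exploit $\cf(\delta) = \theta < \kappa$ together with weak compactness of $\kappa$ to find a single stage $\rho < \delta$ and a cofinal subset $\uu \subseteq \kappa$ such that every relevant set is already in $\de_\rho$ along pairs from $\uu$.

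To carry this out, fix a strictly increasing cofinal sequence $\langle \rho_i : i < \theta\rangle$ in $\delta$. For each pair $\epsilon < \zeta$ from $\kappa$, define
\[ A_{\epsilon,\zeta} = \{ t \in I : a^1_\epsilon(t) < a^1_\zeta(t) \,\land\, a^2_\zeta(t) < a^2_\epsilon(t) \,\land\, \exists x\,(a^1_\zeta(t) < x < a^2_\zeta(t))\}. \]
Since the data give a pre-cut modulo $\de_*$, each $A_{\epsilon,\zeta}$ belongs to $\de_* = \bigcup_\eta \de_\eta$, so there is a least $i(\epsilon,\zeta) < \theta$ with $A_{\epsilon,\zeta} \in \de_{\rho_{i(\epsilon,\zeta)}}$. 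The map $(\epsilon,\zeta) \mapsto i(\epsilon,\zeta)$ is a coloring $[\kappa]^2 \to \theta$, and since $\theta < \kappa$ with $\kappa$ weakly compact, Fact \ref{wcc-fact} (with $n=2$) yields $\uu \in [\kappa]^\kappa$ on which this coloring is constantly some $i^*$. Put $\rho = \rho_{i^*}$; since $\kappa$ is regular (weak compactness implies inaccessibility), $\uu$ is in fact cofinal in $\kappa$. By construction, $A_{\epsilon,\zeta} \in \de_\rho$ for all pairs from $\uu$, so $(\overline{a}_1\rstr_\uu, \overline{a}_2\rstr_\uu)$ is already a $(\kappa,\kappa)$-pre-cut modulo $\de_\rho$, as required.

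For the moreover clause, the support condition is immediate once $A_{\epsilon,\zeta} \in \de_\rho$: the partition $\{I\}$, arising from the empty function $h \in \fin(\gee_\rho)$, lies in $\fin_s(\gee_\rho)$ and trivially satisfies $I \subseteq A_{\epsilon,\zeta} \mod \de_\rho$. If one prefers a nontrivial witness reflecting the actual dependence of $A_{\epsilon,\zeta}$ on functions in $\gee_\rho$, one applies Observation \ref{basis-obs} to the $(\lambda,\aleph_0)$-pre-good triple $(I, \de_\rho, \gee \setminus \gee_\rho)$ to extract a countable antichain of $\fin_s(\gee_\rho)$ on which each $A_{\epsilon,\zeta}$ is based.

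The only nontrivial step is the homogeneous-set argument; since the coloring takes at most $\theta < \kappa$ values on $[\kappa]^2$ and $\kappa$ is weakly compact, Fact \ref{wcc-fact} applies directly. The main small subtlety is that naively pigeonholing on one index alone would not produce a set $\uu$ all of whose \emph{pairs} obey a uniform bound on $i(\epsilon,\zeta)$; it is precisely the Ramsey-theoretic strength of weak compactness on $[\kappa]^2$ that is needed here. Once this is in hand, the rest is bookkeeping.
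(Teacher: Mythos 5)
Your argument for the first part (finding $\rho$ and a cofinal $\uu$ so that the pre-cut statements are already in $\de_\rho$) is correct and goes through essentially as you describe: color each pair $(\epsilon,\zeta)$ by the least stage at which $A_{\epsilon,\zeta}$ enters the filter chain, observe this coloring takes at most $\theta<\kappa$ values, and invoke Fact~\ref{wcc-fact}.

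The ``moreover'' clause, however, is the part the rest of the paper actually needs, and your handling of it misses the point. Reading ``$(\overline{a}_1,\overline{a}_2)$ is supported by $\gee_\rho \mod \de_\rho$'' as the vacuous statement that $A_{\epsilon,\zeta}/\de_\rho=1$ is based on the trivial partition $\{I\}$ makes the clause contentless, and that is not how it is used: in the proof of Theorem~\ref{wc-lcf} (Step~1) the conclusion is invoked in the form ``$A^\beta_{\epsilon,\zeta}$ is supported by $\gee_\beta \mod \de_0$,'' i.e.\ support modulo the \emph{base} filter $\de_0$, which is the nontrivial content feeding the pattern-extraction of Claim~\ref{claim-patterns}. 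Your backup suggestion — applying Observation~\ref{basis-obs} to $(I,\de_\rho,\gee\setminus\gee_\rho)$ — is also off: that would (trivially, since $A_{\epsilon,\zeta}\in\de_\rho$) express support from $\fin_s(\gee\setminus\gee_\rho)$, the \emph{complement} of the set you want, and it uses the wrong triple. The correct route is to apply Observation~\ref{basis-obs} with the original good triple $(I,\de_0,\gee)$: since $A_{\epsilon,\zeta}\in\de_*$ it is $\de_0$-nonzero, hence (by the $\aleph_1$-c.c.\ of $\mcp(I)/\de_0$ built into \ref{basis-obs}) is supported mod $\de_0$ by a \emph{countable} subfamily of $\gee$. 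Because $\cf(\delta)=\theta>\aleph_0$, that countable subfamily sits inside some single $\gee_{\alpha(\epsilon,\zeta)}$; this step is where the hypothesis $\theta>\aleph_0$ is actually used, and it is absent from your write-up. One then colors $[\kappa]^2$ by $\alpha(\epsilon,\zeta)$ (after restricting to a club of $\delta$ of order type $\theta$, so the range has size $\theta<\kappa$) and applies weak compactness, getting a single $\rho$ and a cofinal $\uu$ that handle both clauses simultaneously.
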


\begin{proof}
For each $\epsilon < \zeta < \kappa$, let $A_{\epsilon, \zeta} = \{ s \in I : \exists x (a^1_\epsilon < a^1_\zeta < x < a^2_\zeta < a^2_\epsilon) \} \in \de_*$
be supported by $\gee_{\alpha(\epsilon, \zeta)}$ where $\alpha(\epsilon, \zeta) < \delta$. (See the proof of Observation \ref{basis-obs}, which applies to any
$\de_0$-positive set thus any element of $\de_*$, and note that the supporting sequence of functions obtained there is countable while 
$\theta = \cf(\theta) > \aleph_0$.) 
Let $C$ be a club of $\delta$ of order-type $\theta$, and without loss of generality
$\alpha(\epsilon, \zeta) \in C$. Since $\kappa$ is weakly compact and $\theta < \kappa$, there is $\uu \in [\kappa]^{\kappa}$ such that
$\alpha$ is constant on $\epsilon < \zeta$ from $\uu$. 
\end{proof}

We now turn to the construction of the ultrafilter. 

\begin{theorem} \label{wc-lcf}
Let $\kappa$ be weakly compact and let $\theta, \lambda$ be such that
$\aleph_0 < \theta = \cf{\theta} < \kappa \leq \lambda$. Then there is a regular ultrafilter $\de$ on $I$, $|I| = \lambda$
which has no $(\kappa, \kappa)$-cuts but $\lcf(\aleph_0, \de) = \theta$. 
\end{theorem}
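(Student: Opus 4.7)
The plan is to construct $\de$ by transfinite induction of length $\delta$, where $\delta$ is an ordinal with $\cf(\delta) = \theta$, starting from a $\lambda^+$-excellent, $\lambda$-regular filter $\de_0$ given by Theorem \ref{excellent} together with an independent family $\eff \subseteq {^I\aleph_0}$ of size $2^\lambda$ such that $(I, \de_0, \eff)$ is a $(\lambda, \aleph_0)$-good triple. Fix a continuous increasing decomposition $\eff = \bigcup_{\eta < \delta} \eff_\eta$ and a distinguished subsequence $\langle f_\eta : \eta < \delta \rangle$ with $f_\eta \in \eff_{\eta+1} \setminus \eff_\eta$. Along the induction I will maintain an increasing chain $\langle \de_\eta : \eta < \delta \rangle$ of filters such that $(I, \de_\eta, \eff \setminus \eff_\eta)$ remains a $(\lambda, \aleph_0)$-good triple. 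Bookkeeping will interleave two sorts of tasks: at each stage I will either (i) consume $f_\eta$ by adding $\{ \{ t : f_\eta(t) > n\} : n < \omega \}$ to the filter, so that Fact \ref{fact-lcf} will guarantee $\lcf(\aleph_0, \de) = \cf(\delta) = \theta$; or (ii) handle a potential $(\kappa, \kappa)$-pre-cut. The final $\de = \bigcup_{\eta < \delta} \de_\eta$ will be an ultrafilter by the scaffolding of Fact \ref{uf} once the independent family is exhausted.

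The reduction to ``manageable'' pre-cuts is where weak compactness first enters. By Observation \ref{wcc-lcf}, any $(\kappa, \kappa)$-pre-cut $(\overline{a}^1, \overline{a}^2)$ in the eventual ultrapower is, on some $\kappa$-subset $\uu$ of $\kappa$, already a pre-cut supported by $\eff_\rho$ modulo $\de_\rho$ for some $\rho < \delta$, so it suffices at each inductive stage to realize such supported pre-cuts. Given such a pre-cut at stage $\eta$, the natural distribution is
\[ A_{\epsilon, \zeta} = \{ t \in I : M \models \exists x ( a^1_\epsilon[t] < a^1_\zeta[t] < x < a^2_\zeta[t] < a^2_\epsilon[t] ) \} \]
for $\epsilon < \zeta$ from $\uu$, and each $A_{\epsilon, \zeta}$ has a countable $\fin_s(\eff_\eta)$-basis by Observation \ref{basis-obs}. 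A second use of weak compactness, via Claim \ref{claim-patterns}, yields $\uu' \subseteq \uu$ of cardinality $\kappa$ on which all pairs are $E$-equivalent and on which the incidence pattern of the supporting parameters depends only on the order type of $\{ \epsilon, \zeta \}$. In effect, the rows $\langle A_{h_{\epsilon, \zeta, n}} : n < \omega \rangle$, for $(\epsilon, \zeta) \in [\uu']^2$, form an indiscernible sequence of countable partitions in the boolean algebra $\mcp(I)/\de_\eta$.

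With this indiscernible template in hand I will produce a multiplicative refinement of $\langle A_{\epsilon, \zeta} : (\epsilon, \zeta) \in [\uu']^2 \rangle$ by generically ``adding one more row'' to the indiscernible sequence. Concretely, using finitely many fresh functions from $\eff \setminus \eff_\eta$, I will define a partition whose associated sets $A^*_u$ (for finite $u \subset \uu'$) refine $\bigcap_{\epsilon < \zeta \in u} A_{\epsilon, \zeta}$, are nonzero modulo $\de_\eta$, and are multiplicative modulo $\de_\eta$, exploiting the uniformity that every finite subpattern of the indiscernible sequence is isomorphic to every other. Because $\de_0$ is $\lambda^+$-excellent, Fact \ref{e:fact} and Remark \ref{upgrade} then upgrade multiplicativity modulo $\de_\eta$ to honest multiplicativity, yielding a refining sequence in $\de_\eta^+$ which can be adjoined to the filter; the resulting extension realizes the pre-cut restricted to $\uu'$, hence the original.

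The main obstacle is the ``add one row'' step: producing a single new partition, supported by fresh independent functions, which is simultaneously consistent with every finite substructure of the indiscernible sequence while remaining $\de_\eta$-positive at every finite intersection. The homogeneity obtained from Claim \ref{claim-patterns} is essential, as it reduces infinitely many multiplicative demands to a single ``local'' pattern that the new row must realize; the freedom in $\eff \setminus \eff_\eta$, preserved carefully across the induction so the remaining family never shrinks too far, allows this realization, and excellence does the final bookkeeping from ``multiplicative modulo $\de_\eta$'' to genuine multiplicativity. A standard bookkeeping over all $\kappa$-sequences in $(\mathbb{N},<)^I$ ensures every potential $(\kappa, \kappa)$-pre-cut is treated, so the final ultrafilter $\de$ has the properties claimed.
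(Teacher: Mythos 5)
Your proposal matches the paper's proof in all essentials: the excellent starting filter with a $(\lambda,\aleph_0)$-good triple, the length-$\delta$ induction with $\cf(\delta)=\theta$, the two applications of weak compactness (Observation \ref{wcc-lcf} to localize cuts at a bounded stage, Claim \ref{claim-patterns} to extract a uniform template), the generic extension of the resulting indiscernible sequence by a new ``row,'' the use of excellence via Fact \ref{e:fact} to convert ``multiplicative mod $\de_0$'' into genuine multiplicativity, and Fact \ref{fact-lcf} for the lower cofinality. One imprecision worth noting: the new row is not obtained from ``finitely many fresh functions'' and is not a single partition. For each $\epsilon$ in the homogeneous set $\uu$ one must select a fresh countable sequence of functions from $\gee_\alpha\setminus\gee_\beta$ to define the set $A^\beta_{\epsilon,\kappa}$, so a single inductive step may consume up to $\kappa$-many fresh functions (which is fine since $2^\lambda$ remain); the multiplicative refinement is then the family $B'_u=\bigcap_{\epsilon\in u}A^\beta_{\epsilon,\kappa}$, multiplicative by construction rather than by being a partition. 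This is a slip in bookkeeping, not in the mathematics, and you correctly identify the substantive obstacle, namely showing $B'_u\subseteq B_u \mod \de_0$, which is the paper's Step 3.
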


\begin{proof}
We begin with  $\gee \subseteq {^I\mathbb{N}}$, $|G| = 2^\lambda$, and $\de$ so that $(I, \de_0, \gee)$ a $(\lambda, \aleph_0)$-good triple
and $\de_0$ is a $\lambda^+$-excellent filter, given by Theorem \ref{excellent} above. 
Let $\delta$ be an ordinal with cofinality $\theta$ divisible by $2^\lambda$ (we will use $2^\lambda \times \theta$).
Let $\langle g_\alpha : \alpha < \delta \rangle$ list $\gee$ with no repetition. Let $M$ be a model of the theory of linear order
with universe $\mathbb{N}$ (we will use $(\mathbb{N}, <)$). 

Let $\langle (\overline{a}^\alpha_1, \overline{a}^\alpha_2) : \alpha < \delta \rangle$ enumerate all pairs of $\kappa$-sequences of elements of 
$M^I$ (i.e. all potential $(\kappa, \kappa)$-pre-cuts) each occurring cofinally often.

We choose $\de_\alpha$ by induction on $1 \leq \alpha \leq \delta$ such that:
\begin{enumerate}
\item[(a)] $\de_\alpha$ is a filter on $I$ extending $\de_0$
\item[(b)] $\beta < \alpha \implies \de_\beta \subseteq \de_\alpha$ and the chain is continuous, i.e. $\alpha$ limit
implies $\de_\alpha = \bigcup \{ \de_\beta : \beta < \alpha \}$
\item[(c)] if $A \in \de_\alpha$ then $A/\de_0$ is supported by $\gee_\alpha := \{ g_\gamma : \gamma < 2^\lambda\alpha \}$
\item[(d)] If $A \subseteq I$ and $A/\de$ is supported by $\gee_\alpha$, then $A \in \de_\alpha$ or $I \setminus A \in \de_\alpha$
\item[(e)] If $(\overline{a}^\alpha_1, \overline{a}^\alpha_2)$ is a cut $\mod \de_\alpha$ hence is supported by $\gee_\alpha \mod \de_0$,
then for some $b_\alpha \in {^IM}$, for every $\epsilon < \kappa$ we have that the set 
\[ \{ s \in I : a^1_\epsilon[s] < b_\alpha[s] < a^2_\epsilon[s] \} \in \de_{\alpha+1}\]
hence any ultrafilter extending $\de_{\alpha+1}$ realizes this cut. 
\end{enumerate}

Before we justify this induction, note that it suffices: letting $\de = \de_\delta = \bigcup \{ \de_\alpha : \alpha < \delta \}$ be the resulting ultrafilter,
we have by Observation \ref{wcc-lcf} 
that $\de$ realizes all $(\kappa, \kappa)$-pre-cuts (since each of them will be addressed at some bounded stage), 
and we have that $\lcf(\aleph_0, \de) = \theta$ by Fact \ref{fact-lcf}.

For $\alpha = 0$, let $\de_\alpha = \de_0$.

For $\alpha$ limit, without loss of generality (see \S \ref{s:constr} above)
let $\de_\alpha$ be a maximal extension of $\bigcup \{ \de_\beta : \beta < \alpha \}$ satisfying 
(a), (b), (c); it will then necessarily satisfy (d). 

So let $\alpha = \beta + 1$. We consider the potential cut 
$(\overline{a}^\beta_1, \overline{a}^\beta_2)$, henceforth $(\overline{a}_1, \overline{a}_2)$. 
If for some $\epsilon, \zeta < \kappa$ the set 
$A_{\epsilon, \zeta} = \{ s \in I : \exists x (a^1_\epsilon < a^1_\zeta < x < a^2_\zeta < a^2_\epsilon) \} \notin \de_\beta$, 
then proceed as in the limit case. 

Otherwise, this stage has several steps.

\step{Step 1: Fixing uniform templates.}
Working $\mod \de_0$ (which recall is an excellent therefore good filter), each $A_{\epsilon, \zeta} = A^\beta_{\epsilon, \zeta}$ is supported by
$\gee_\beta \mod \de_0$, and is in $\de_0^+$. Thus applying Claim \ref{claim-patterns} to
$\langle A^\beta_{\epsilon, \zeta} : \epsilon < \zeta < \kappa \rangle$, let $\uu_\beta \in [\kappa]^{\kappa}$ be a homogeneous sequence
satisfying (1)-(2) of Claim \ref{claim-patterns} and let $i=i(\beta)$, $\dom(h^\beta_{n})$, 
$h^\beta_{n}(\gamma(i))$, and so forth be the associated data, which we may informally call patterns or ``templates''. For clarity, we write each
$\gamma(i,\epsilon, \zeta)$ [note: recall that these list the functions supporting $A_{\epsilon, \zeta}$ in increasing order]
as $\gamma(i,\epsilon, \zeta, \beta)$ to emphasize the stage in the construction.

Now, these ``templates'' are given for each of the sets $A_{\epsilon, \zeta}$, for \emph{pairs} $\epsilon < \zeta$ from $\uu$.
Stepping back for a moment, we would ultimately like to realize the given cut. We have in hand a distribution of the type
in which $\vp(x;a^1_\epsilon, a^2_\epsilon) = a^1_\epsilon < x < a^2_\epsilon$ is sent to 
$A_\epsilon = \{ s \in I : \exists x (a^1_\epsilon < x < a^2_\epsilon) \}$, and so we will eventually want to 
look for a multiplicative distribution in which each $A_\epsilon$ is refined to a smaller set.
Roughly speaking, our strategy will be to choose a refinement for $A_\epsilon$ by analyzing the limiting behavior of
the templates for $A_{\epsilon, \zeta}$ as $\zeta \rightarrow \kappa$. The hypothesis of weak compactness will give us enough
leverage to do this, by revealing strongly uniform behavior within any such sequence. Recall that in light of Claim \ref{claim-patterns},
what changes with $\zeta$ is the actual identity of the sequence of functions
$\langle \gamma(i, \epsilon, \zeta, \beta) : i < i_\beta \rangle \subseteq \gee_\beta$.  

\step{Step 2: ``Limit'' sequences from the templates.}
For each $\epsilon \in \uu$, we thus have a $\kappa$-sequence of countable sequences 
\[  \langle \langle \gamma(i, \epsilon, \zeta, \beta) : i < i_\beta \rangle : \zeta \in \uu \rangle \]
Looking ahead, we will want to choose the $\kappa$th row of this sequence, 
$\langle \gamma(i, \epsilon, \kappa, \beta) : i < i_\beta \rangle$ in a natural way. [The choice
will be justified in Steps 3-4.]
We first observe that Claim \ref{claim-patterns}(2) has strong consequences: 

\begin{enumerate}
\item[(i)] Fixing $\epsilon \in \uu$, 
 the sequence $\langle \gamma(i, \epsilon, \zeta, \beta) : \epsilon < \zeta ~\mbox{from $\uu$} \rangle$ is either constant or else 
any two of its elements are distinct, and
\item[(ii)] if for some $i_1 \neq i_2$, $\zeta_1 \neq \zeta_2$ we have
$\gamma(i_1, \epsilon, \zeta_1, \beta) = \gamma(i_2, \epsilon, \zeta_2, \beta)$ then both sequences
$\langle \gamma(i_1, \epsilon, \zeta, \beta) : \epsilon < \zeta ~\mbox{from $\uu$} \rangle$ and 
$\langle \gamma(i_2, \epsilon, \zeta, \beta) : \epsilon < \zeta ~\mbox{from $\uu$} \rangle$ are eventually equal to the same value and thus,
by (i), everywhere constant. 
\item[(iii)] For for $i_1, i_2, \epsilon_1, \epsilon_2$ we have that:
\newline there exist $\zeta_1, \zeta_2 > \epsilon_1, \epsilon_2$
such that 
\[ \gamma_1(i_1, \epsilon_1, \zeta_1, \beta) = \gamma_1(i_1, \epsilon_1, \zeta_2, \beta) =
\gamma_1(i_2, \epsilon_2, \zeta_1, \beta) = \gamma_1(i_2, \epsilon_2, \zeta_2, \beta) \]
if and only if  
\newline for all $\zeta > \epsilon_1, \epsilon_2$,  $\gamma(i_1, \epsilon_1, \zeta, \beta) = \gamma(i_2, \epsilon_2, \zeta, \beta)$.
\end{enumerate}

We may therefore choose ordinals
$\langle \gamma(i,\epsilon, \kappa, \beta) : \epsilon \in \uu, i < i(\beta) \rangle$, such that:

\begin{enumerate}
\item if $i<i(\beta), \epsilon \in \uu$ and $\gamma(i, \epsilon, \zeta_1, \beta) = \gamma(i, \epsilon, \zeta_2, \beta)$ whenever
$\zeta_2 > \zeta_1 > \epsilon$ are from $\uu$, then $\gamma(i, \epsilon, \kappa, \beta) = \gamma(i, \epsilon, \zeta, \beta)$ 
whenever $\zeta > \epsilon$ is from $\uu$
\item if $i < i(\beta)$, $\epsilon \in \uu_\beta$ and $\gamma(i, \epsilon, \kappa, \beta)$ is not defined by clause (1),
then $\gamma(i, \epsilon, \kappa, \beta) \in [2^\lambda\beta, 2^\lambda\alpha)$, i.e. in $\gee_\alpha \setminus \gee_\beta$, subject to:
\item if $\epsilon_1, \epsilon_2 \in \uu$ and $i_1, i_2 < i(\beta)$ then $\gamma(i_1, \epsilon_1, \kappa, \beta) = \gamma(i_2, \epsilon_2, \kappa, \beta)$ 
iff $\gamma(i_1, \epsilon_1, \zeta, \beta) = \gamma(i_2, \epsilon_2, \zeta, \beta)$ for all $\zeta > \epsilon_1, \epsilon_2$ from $\uu_\beta$.
\end{enumerate}

For each $\epsilon \in \uu$, let $h_{n, \epsilon, \kappa, \beta}$ be defined in the obvious way, i.e. as the template functions considered over the countable sequence
$\{ \gamma(i, \epsilon, \kappa, \beta) : i < i_\beta \}$. Then set
\[ A^\beta_{\epsilon, \kappa} = \bigcup \{ A_{h_{n, \epsilon, \kappa, \beta}} : \trv_{n, \beta} = 1 \} \]
\noindent For each finite $u \subseteq \uu$ let
\[ B_u = \{ t \in I : M \models (\exists x) (\bigwedge_{\epsilon \in u} a^\epsilon_1[t] < x < a^\epsilon_2[t] \} \]
and let 
\[ B^\prime_u = \bigcap \{ A^\beta_{\epsilon,\kappa} : \epsilon \in u \} \]
With these definitions in hand, we turn to Step 3.

\step{Step 3: $B_u \supseteq B^\prime_u \mod \de_0$.}

In this step we verify that for each $u \in [\uu]^{<\aleph_0}$, $B_u \supseteq B^\prime_u \mod \de_0$. Why? Simply 
because of the uniformity of the templates, the choice at $\kappa$ and the independence of $\gee$. Informally,
the sequence of sequences in Step 2 was $\emptyset$-indiscernible in the Boolean algebra $\mcp(I)/\de_0$,
and the generic choice in Step 2 added a $\kappa$th element (sequence) to this sequence. 

More specifically, it suffices that for $u \in [\uu]^{<\aleph_0}$,
\[(**)~~~~ \left(B^\prime_u := \bigcap_{\epsilon \in u} A^\beta_{\epsilon,\kappa} \subseteq B_u \mod \de_0 \right)
\iff \left( \bigcap_{\epsilon \in u} A^\beta_{\epsilon,\zeta} \subseteq B_u \mod \de_0 ~\mbox{for sufficiently large $\zeta$} \right) \]

Why does $(**)$ suffice for Step 3? Because the right-hand side will always hold:
fix $u \in [\uu]^{<\aleph_0}$, let $\zeta > \max u$ and recall the definition of
$A^\beta_{\epsilon, \zeta}$ from the beginning of the inductive proof. 

\noindent\underline{Details: the case $|u| = 1$.} Here we justify why ``continuing the indiscernible sequence'' retains the
right relationship to $B_u$; all key ideas of the proof appear in this notationally simplest case.
Suppose that $u = \{ \epsilon \}$ for some $\epsilon \in \uu$. 
By definition of $A^\beta_{\epsilon, \kappa}$, it will suffice to show that $\trv_{n, \beta} = 1 \implies
A_{h_{n, \epsilon, \kappa, \beta}}$. Fix some such $n$.

Let us recall the picture. We have a sequence of countable sequences, 
\[  \langle \langle \gamma(i, \epsilon, \zeta, \beta) : i < i_\beta \rangle : \zeta \in \uu \cup \{ \kappa \} \rangle \]
or if the reader prefers, an $i_\beta \times \kappa$-array, whose elements are functions from $\gee$. We know from Step 2
that this array is strongly uniform in various ways: for instance, the elements in each column $\{ \gamma(i, \epsilon, \zeta, \beta) : \zeta \in \uu \cup \{ \kappa \} \}$ are either pairwise equal or pairwise distinct.

The function $h_{n,\epsilon, \kappa, \beta} \in \fin(\gee)$ under consideration assigns finitely many elements of row $\kappa$, say the elements 
with indices $\{ i_1, \dots i_\ell \} \subseteq i_\beta$ [which are, themselves, functions belonging to $\gee$] specific integer values $\{ m_1, \dots m_\ell \} \subseteq \mathbb{N}$. 
Since the domain of $h_{n, \epsilon, \kappa, \beta}$ is $\gee$, rather than $i_\beta$, it will simplify notation to define a row function
$\bz: \uu \cup \{ \kappa \} \rightarrow \fin_s(\gee)$ which, to each row $\zeta$ in the array, assigns the set
$A_{h_{n,\epsilon, \zeta, \beta}}$. 

We make a series of observations.

(1) By construction, for each $\zeta \in \uu$, $\bz(\zeta) \subseteq B_{\{ \epsilon \}} \mod \de_0$. 

Suppose that (1) fails for $\bz(\kappa)$, that is, 
\[ X := \{ s \in I : s \in \bz(\kappa), s \notin B_{ \{\epsilon\} } \} \neq \emptyset \mod \de_0 \]
As $(I, \de_0, \gee)$ is a good triple, $\fin_s(\gee)$ is dense in $(\de_0)^+$, and there is some $Y \in \fin_s(\gee), Y \subseteq X \mod \de_0$. 
Let $h_Y \in \fin(\gee)$ be such that $A_{h_Y} = Y$. 

(2) Without loss of generality, since $Y \subseteq \bz(\kappa) = A_{h_{n,\epsilon, \kappa, \beta}}$,
we may assume that any functions in the domain of ${h_{n,\epsilon, \kappa, \beta}}$ are in the domain of $h_Y$ (and that on their common domain,
${h_{n,\epsilon, \kappa, \beta}}$ and $h_Y$ agree). There are two cases.

Case 1: There is some $\zeta \in \uu$ such that $\bz(\zeta) \cap Y \neq \emptyset \mod \de_0$. This contradicts (1).

Case 2: Not case 1, that is, for every $\zeta \in \uu$, $\bz(\zeta) \cap Y = \emptyset \mod \de_0$. Notice that:

(3) As $\{ \bz(\zeta) : \zeta \in \uu \} \cup \{ Y \} \subseteq \fin_s(\gee)$, the only way this can happen is if there is an explicit contradiction
in the corresponding functions, i.e  if for each $\zeta \in \uu$ there is $f \in \dom({h_{n,\epsilon, \zeta, \beta}}) \cap 
\dom(h_Y)$ and ${h_{n,\epsilon, \zeta, \beta}}(f) \neq h_Y(f)$. 

The functions in $\dom(h_Y)$ have one of three sources:
\begin{itemize}
\item[(i)] elements of $\gee_\beta$ which already belong to $\dom({h_{n,\epsilon, \kappa, \beta}})$, and thus also to each
$\dom({h_{n,\epsilon, \zeta, \beta}})$, by the construction in Step 2
\item[(ii)] elements of $\gee_\beta$ which do not belong to $\dom({h_{n,\epsilon, \kappa, \beta}})$, and thus to no
more than one $\dom({h_{n,\epsilon, \zeta, \beta}})$ by the construction in Step 2
\item[(iii)] elements of $\gee \setminus \gee_\beta$
\end{itemize}

By the choice of row $\kappa$ in Step 2, for all $\zeta \in \uu$, 
${h_{n,\epsilon, \kappa, \beta}}$ and ${h_{n,\epsilon, \zeta, \beta}}$ agree on functions from $\gee_\beta$.
Since $h_Y$ is compatible with ${h_{n,\epsilon, \kappa, \beta}}$ by construction (2), there can be no incompatibility with 
${h_{n,\epsilon, \zeta, \beta}}$ on
functions of type (i), and since $\dom({h_{n,\epsilon, \zeta, \beta}}) \subseteq \gee_\beta$ by inductive hypothesis, 
neither will there be an incompatibility with (iii). As noted, each conflict of type (ii) rules out at most one $\zeta \in \uu$.
Since $\dom(h_Y)$ is finite, it must be that the instructions ${h_{n,\epsilon, \zeta, \beta}} \cup h_Y$ are compatible 
for all but finitely many $\zeta$. Thus for all but finitely many $\zeta \in \uu$, $\bz(\zeta) \cap Y \neq \emptyset$ by (3).

We have shown that if $\bz(\kappa) \not\subseteq B_{ \{ \epsilon \} }$, then for some (in fact, nearly all) $\zeta \in \uu$, 
$\bz(\zeta) \not\subseteq B_{ \{ \epsilon \} } \mod \de_0$, contradicting (1). 

Conversely, if for more than finitely many $\zeta \in \uu$, $\bz(\zeta) \not\subseteq B_{ \{ \epsilon \} } \mod \de_0$
this will be reflected in the functions of type (i), so inherited by $\bz(\kappa)$. This completes the proof.

\noindent\underline{The case $|u| > 1$.}
The argument for $|u| > 1$ involves more notation, but no new ideas, since condition (3) of Step 2 guarantees that we
have the same level of uniformity across finitely many $\epsilon$-sequences.

\step{Step 4: A multiplicative refinement.}
In this step we tie up loose ends, realize the type and finish the inductive definition of $\de_\alpha$.
We make a sequence of assertions.

First, the sequence $\langle B^\prime_u : u \in [\uu]^{<\aleph_0} \rangle \subseteq (\de_0)^+$ is multiplicative.

Second, for each $u \in [\uu]^{<\aleph_0}$, $B_u \supseteq B^\prime_u \mod \de_0$, by Step 3.

Third, by definition of the sets $B^\prime_u$, recalling the fact that $(I, \de_0, \gee)$ is 
a good triple, we have that $\de_\beta \cup \{ B^{\prime}_u : u \in [\uu]^{<\aleph_0} \}$ generates a filter.

However, this is not yet enough: a multiplicative refinement must be genuinely contained inside the original sequence,
rather than simply contained $\mod \de_0$. So let us define a third sequence
\[ \langle B^{\prime\prime}_u : u \in [\uu]^{<\aleph_0} \rangle, ~\mbox{where}~ 
u \in [\uu]^{<\aleph_0} \implies B^{\prime\prime}_u  = B_u \cap B^{\prime}_u \]

We have transferred the problem: now, on one hand
(1) $B^{\prime\prime}_u \subseteq B_u$ (without ``mod $\de$''), while on the other hand (2) 
$\langle B^{\prime\prime}_u : u \in [\uu]^{<\aleph_0} \rangle$ is multiplicative $\mod \de_0$.

As we chose $\de_0$ to be $\lambda^+$-excellent, by Fact \ref{e:fact}, there is 
$\langle B^{*}_u : u \in [\uu]^{<\aleph_0} \rangle \subseteq \de^+$
refining $\langle B^{\prime\prime}_u : u \in [\uu]^{<\aleph_0} \rangle$, which is indeed multiplicative,
and which satisfies $B^*_u = B^{\prime\prime}_u = B^\prime_u \mod \de$.
A fortiori this fourth sequence refines the original distribution, $\langle B_u : u \in [\uu]^{<\aleph_0} \rangle$. 

Now we finish. Let $\de^\prime_\alpha$ be the filter generated by $\de_\beta \cup \{ B^{*}_u : u \in [\uu]^{<\aleph_0} \}$.
Since for each $u \in [\uu]^{<\aleph_0}$, $B^*_u = B^\prime_u \mod \de_0$, this is a (nontrivial) filter.
Any ultrafilter extending $\de^\prime_\alpha$ will realize the cut 
$(\overline{a}_1, \overline{a}_2) = (\overline{a}^\beta_1, \overline{a}^\beta_2)$, since its distribution has a multiplicative
refinement.
(By transitivity of linear order and the fact that $\uu$ is cofinal in $\kappa$,
there is no loss in realizing the cut restricted to $\uu$.)

Finally, as before, let $\de_\alpha$ be a maximal extension of $\de^\prime_\alpha$ satisfying 
(a), (b), (c); it will then necessarily satisfy (d). This completes the inductive step, and thus the proof.
\end{proof}

\subsection{Discussion}
In the remainder of this section, we discuss some variants of Theorem \ref{wc-lcf}. 

\begin{rmk} \label{wc-use}
The hypothesis ``$\kappa$ is weakly compact'' in Theorem \ref{wc-lcf} was used in two key places:
\begin{enumerate}
\item to extract a very uniform subsequence,  Claim \ref{claim-patterns}
\item to ensure that each cut was supported at some bounded stage in the construction, Observation \ref{wcc-lcf}
\end{enumerate}
\end{rmk}

We may wish to avoid large cardinal hypotheses, which we can do using the following polarized partition relation
(of course, the result will no longer be about symmetric cuts).

\begin{defn} \cite{e-h-m-r}
The \emph{polarized partition relation}
\[ \left( {\begin{array}{c}
					\kappa_1 \\
					\kappa_2 \\
					\end{array}}   \right) \rightarrow \left( {	\begin{array}{c}
																											\kappa_1 \\
																											\kappa_2 \\
																											\end{array}} \right)^{(1,1)}_{2^{\aleph_0}} \]								
holds when for every coloring of $\{ ( \alpha, \beta ) : \alpha < \kappa_1, \beta < \kappa_2 \}$
by at most $2^{\aleph_0}$-many colors, there exist $X\in [\kappa_1]^{\kappa_1}, Y \in [\kappa_2]^{\kappa_2}$
such that $\{ ( \alpha, \beta ) : \alpha \in X, \beta \in Y \}$ is monochromatic.
\end{defn}

\begin{fact} \label{ppr-fact} \cite{e-h-m-r} 
Suppose $\kappa_1, \kappa_2$ are regular and that $2^{\aleph_0} < {2^{\kappa_1}} < \kappa_2$. Then
\[ \left( {\begin{array}{c}
					\kappa_1 \\
					\kappa_2 \\
					\end{array}}   \right) \rightarrow \left( {	\begin{array}{c}
																											\kappa_1 \\
																											\kappa_2 \\
																											\end{array}} \right)^{(1,1)}_{2^{\aleph_0}} \]								
\end{fact}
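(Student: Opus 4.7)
The plan is a two-step pigeonhole argument: first collapse the second coordinate, then the first. Fix a coloring $c : \kappa_1 \times \kappa_2 \to 2^{\aleph_0}$; I want to produce $X \in [\kappa_1]^{\kappa_1}$ and $Y \in [\kappa_2]^{\kappa_2}$ on which $c$ is constant.

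For the first step, I associate to each $\beta < \kappa_2$ its column function $c_\beta : \kappa_1 \to 2^{\aleph_0}$ given by $c_\beta(\alpha) = c(\alpha, \beta)$. The set of all possible column functions has cardinality at most $(2^{\aleph_0})^{\kappa_1} = 2^{\kappa_1}$. Since $\kappa_2$ is regular and by hypothesis $\kappa_2 > 2^{\kappa_1}$, the pigeonhole principle applied to the map $\beta \mapsto c_\beta$ yields a single function $f : \kappa_1 \to 2^{\aleph_0}$ and a fiber $Y \in [\kappa_2]^{\kappa_2}$ on which $c_\beta \equiv f$. In particular $c(\alpha, \beta) = f(\alpha)$ whenever $\alpha < \kappa_1$ and $\beta \in Y$.

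For the second step, I apply pigeonhole again, this time to $f$ itself. The range of $f$ is contained in $2^{\aleph_0}$, and since $\kappa_1$ is regular with $\kappa_1 > 2^{\aleph_0}$ (the content one extracts from the left-hand inequality of the hypothesis, which must be read strongly enough to rule out small $\kappa_1$), some color class $f^{-1}(\gamma)$ has cardinality $\kappa_1$. Set $X := f^{-1}(\gamma)$. Then for all $(\alpha, \beta) \in X \times Y$ we have $c(\alpha, \beta) = f(\alpha) = \gamma$, which is exactly the polarized partition conclusion.

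There is no deeper obstacle: both steps are pure cardinal-arithmetic counts, and the argument does not depend on any structure of $c$ beyond the cardinality of its codomain. The one delicate point worth flagging is ensuring the hypothesis really does guarantee $\kappa_1 > 2^{\aleph_0}$ so that the second pigeonhole applies; without such a condition the diagonal-type coloring $c(\alpha, \beta) = \alpha$ (using an injection $\kappa_1 \hookrightarrow 2^{\aleph_0}$) shows no monochromatic rectangle of width $\kappa_1$ can exist, so this is in any case a necessary feature of the intended reading of the statement.
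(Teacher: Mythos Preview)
The paper does not supply a proof of this fact; it is simply quoted from the Erd\H{o}s--Hajnal--Mat\'e--Rado monograph \cite{e-h-m-r}. Your two-step pigeonhole argument is exactly the standard proof found there, and it is correct.

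You are also right to flag the hypothesis. As literally written in the paper, $2^{\aleph_0} < 2^{\kappa_1}$ only forces $\kappa_1 > \aleph_0$, not $\kappa_1 > 2^{\aleph_0}$, and without the latter your second pigeonhole step (and indeed the conclusion itself, by the diagonal coloring you describe) fails. The intended hypothesis in the cited reference is $2^{\aleph_0} < \kappa_1$ together with $2^{\kappa_1} < \kappa_2$; your proof is complete under that reading.
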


\begin{concl}
Suppose $\kappa_1, \kappa_2 \leq \lambda$ satisfy the hypotheses of Fact \ref{ppr-fact} and that $\theta = \cf(\theta) \leq 2^{\aleph_0}$. 
Then there is a regular ultrafilter $\de$ on $I$, $|I| = \lambda$
which has no $(\kappa_1, \kappa_2)$-cuts but $\lcf(\aleph_0, \de) = \theta$. 
\end{concl}

\begin{proof}
Suppose we want to ensure realization of all $(\kappa_1, \kappa_2)$-cuts. We proceed
just as in the proof of Theorem \ref{wc-lcf}, with the following two changes corresponding to the two parts of Remark 
\ref{wc-use}. The polarized partition relation will allow us to 
extract a cofinal sub-cut in accordance with condition (1), as by Remark \ref{eq-size} there are only continuum many equivalence 
classes. It will likewise allow us to carry out the argument of Observation \ref{wcc-lcf} whenever the given $\theta \leq 2^{\aleph_0}$;
note that without the stronger assumption of weak compactness, it is no longer sufficient to assume $\theta < \kappa$. 
\end{proof}

\br
In a paper in preparation \cite{MiSh:998}, we investigate further the set of possible cofinalities of cuts in ultrapowers of linear order.


\begin{thebibliography}{50}
\bibitem{ck73} C. C. Chang and H. J. Keisler, \emph{Model Theory}, North-Holland, 1973. 

\bibitem{dow} A. Dow, ``Good and OK ultrafilters.'' 
Trans. AMS, Vol. 290, No. 1 (July 1985), pp. 145-160.

\bibitem{DzSh} M. D\v{z}amonja and S. Shelah, ``On $\vartriangleleft\sp*$-maximality,'' Ann. Pure Appl. Logic 125 (2004) 119--158

\bibitem{e-h-m-r} E. Erd\"{o}s, A. Hajnal, A. Mate, and R. Rado. \emph{Combinatorial set theory: partition relations for cardinals.}
Studies in Logic and the Foundations of Mathematics 106. North-Holland, 1984.

\bibitem{ek} R. Engelking and M. Kar\l owicz, ``Some theorems of set theory and their topological consequences.''
Fund. Math. 57 (1965) 275--285. 

\bibitem{kanamori} A. Kanamori, \emph{The Higher Infinite}. Springer, 2003. 

\bibitem{keisler-1} H. J. Keisler, ``Good ideals in fields of sets,'' Ann. of Math. (2) 79 (1964), 338-359.

\bibitem{keisler} H. J. Keisler, ``Ultraproducts which are not saturated.'' J. Symbolic Logic 32 (1967) 23--46.


\bibitem{KSV} J. Kennedy, S. Shelah, and J. Vaananen. ``Regular Ultrafilters and Finite Square Principles.'' J. Symbolic Logic 73 (2008) 817-823.

\bibitem{kunen} K. Kunen, ``Ultrafilters and independent sets.'' Trans. Amer. Math. Soc. 172 (1972), 299--306. 

\bibitem{mm1} M. Malliaris, ``Realization of $\vp$-types and Keisler's order.'' Ann. Pure Appl. Logic 157 (2009), no. 2-3, 220--224

\bibitem{mm-thesis} M. Malliaris, Ph. D. thesis, U.C. Berkeley (2009). Available at http://math.uchicago.edu/$\sim$mem.
\bibitem{mm4} M. Malliaris, ``Hypergraph sequences as a tool for saturation of ultrapowers.'' J. Symbolic Logic, 77, 1 (2012) 195-223.
%
\bibitem{mm5} M. Malliaris, ``Independence, order and the interaction of ultrafilters and theories.'' (2010) To appear, Ann. Pure Appl. Logic. 
http://dx.doi.org/10.1016/j.apal.2011.12.010.

\bibitem{MiSh:996} M. Malliaris and S. Shelah, ``Constructing regular ultrafilters from a model-theoretic point of view.'' 
http://arxiv.org/abs/1204.1481.

\bibitem{MiSh:998} M. Malliaris and S. Shelah, ``Cofinality spectrum theorems in model theory, set theory and general topology.''  

\bibitem{MiSh:999} M. Malliaris and S. Shelah, ``A dividing line within simple unstable theories.'' 

\bibitem{Sh:a} 
S. Shelah, \emph{Classification Theory}, North-Holland, 1978. 


\bibitem{Sh:c} 
S. Shelah, \emph{Classification Theory and the number of non-isomorphic models}, rev. ed., North-Holland, 1990. 

\bibitem{Sh500} �
S. Shelah, ``Toward classifying unstable theories.'' Annals of Pure and Applied Logic 80 (1996) 229--255. 

\bibitem{ShUs}
S. Shelah and A. Usvyatsov, ``More on ${\rm SOP}_1$ and ${\rm SOP}_2$.'' Ann. Pure Appl. Logic 155 (2008), no. 1, 16--31. 
\end{thebibliography}
\end{document}